\newtheorem{theorem}{Theorem}[section]
\newtheorem{assumption}[theorem]{Assumption}
\newtheorem{corollary}[theorem]{Corollary}
\newtheorem{definition}[theorem]{Definition}
\newtheorem{lemma}[theorem]{Lemma}
\newtheorem{remark}[theorem]{Remark}
\numberwithin{equation}{section}
\newcounter{cnum}
\def\C{\@ifnextchar[{\@with}{\@without}}
\def\@with[#1]{\@ifundefined{c@#1}{%
    	   \newcounter{#1}%
            \@bsphack\protected@write\@auxout{}{
            \string\newlabel{#1}{{\thecnum} {\thepage}}
            \@esphack
            }
            C_{\thecnum}
            \stepcounter{cnum}%
        }{C_{\ref{#1}}}}
\def\@without{C_{\thecnum}
        \stepcounter{cnum}}
\journal{}
\begin{document}

\begin{frontmatter}

\title{Existence of compact $\varphi$-attracting sets and estimate of their attractive velocity for infinite-dimensional dynamical systems\tnoteref{t1}}
\tnotetext[t1]{The work is supported by National Natural Science Foundation of China (No.11731005; No.11801071).}

\author[1,2]{Chunyan Zhao}

\author[2]{Chengkui Zhong\corref{cor1}}
\author[2]{Xiangming Zhu}

\address[1]{School of Mathematics and Big Data, Anhui University of Science and Technology, Huainan, 232001, PR China}
\address[2]{Department of Mathematics, Nanjing University, Nanjing, 210093, PR China}

 \cortext[cor1]{Corresponding author.\\
 E-mails: emmanuelz@163.com(C. Zhao), ckzhong@nju.edu.cn(C. Zhong), zxmxygmt@163.com(X. Zhu)}

\begin{abstract}
This paper is devoted to the quantitative study of the attractive velocity of   compact semi-invariant attracting sets for  infinite-dimensional dynamical systems.  We  introduce the notion of compact $\varphi$-attracting set whose attractive speed is characterized by a general non-negative decay function~$\varphi$,  and prove that~$\varphi$-decay with respect to  noncompactness measure is a sufficient condition for a dissipative system to have a compact $\varphi$-attracting set.  Furthermore, several criteria for~$\varphi$-decay with respect to noncompactness measure are provided.  Finally, as an application, we establish the existence of a compact exponential attracting set and the specific estimate of its attractive velocity for a semilinear wave equation with a critical nonlinearity.
\end{abstract}

\begin{keyword}
Compact $\varphi$-attracting set \sep Attractive velocity \sep $\varphi$-decay with respect to noncompactness measure \sep Wave equation

\MSC[2010] 35B40\sep 35B41 \sep 35L05
\end{keyword}

\end{frontmatter}

\section{Introduction}
In this paper, we will study the attractive velocity of  compact semi-invariant attracting sets for infinite-dimensional dynamical systems.

The global attractor~$\mathcal{A}$ of a dynamical system~$(X, \{S(t)\}_{t\geq0})$~is a compact, invariant subset of~$X$~which attracts any bounded set~$B\subseteq X$. By the definition, the global attractor (if exists) captures all the asymptotic behavior of the system. According to the~H\"{o}lder-Ma\~{n}\'{e} theorem (see \cite{Foias1996,MR1710097}), each
compact set with finite fractal dimension is homeomorphic to a compact subset of Euclidean space~$\mathds{R}^n$. Therefore, if the fractal dimension of the global attractor is finite, then the infinite-dimensional dynamical system restricted to the global attractor is reducible to a finite-dimensional dynamical system (see \cite{MR2508165}).

Even so, as an object to describe the long-term behavior of dissipative systems, the global attractor still has its limitations. First, its definition does not contain quantitative information about the speed at which it attracts  bounded sets. In fact, it may attract  bounded sets at arbitrarily low rates. Second, the global attractor may be sensitive to perturbations. In general, the global attractor is upper semicontinuous with respect to perturbations, whereas its lower semicontinuity is much more difficult to obtain. Finally, when the system has a global attractor
with finite fractal dimension, the reduced finite-dimensional dynamical system given by the H\"{o}lder-Ma\~{n}\'{e} theorem is only H\"{o}lder continuous, not necessarily Lipschitz continuous, so it is not necessarily generated by ordinary differential equations (see \cite{MR2508165,Robinson2001}).

In order to overcome these limitations, Foias, Sell and Temam \cite{Inertialmanifolds} proposed the notion of inertial manifold in 1988. It is defined as a positively invariant, finite-dimensional, Lipschitz manifold which exponentially attracts bounded subsets and contains the global attractor. The dynamical system restricted to the inertial manifold is reducible to a Lipschitz continuous system of ordinary differential equations which is called the inertial form of the original system. Almost all known methods of constructing inertial manifolds are based on the ``spectral interval" condition (see \cite{Inertialmanifolds}), which is difficult to verify. The existence of inertial manifolds has been proved for a large number of equations with space dimension  1 or 2 (see \cite{MR966192,Inertialmanifolds,Robinson2001,MR1873467,TEMAM}). However, its existence is still an open
problem for several important equations, such as the two-dimensional incompressible Navier-Stokes equations. Furthermore, its nonexistence  has been proved for damped Sine-Gordon equations \cite{MR921912}.

As it is not always possible to obtain inertial manifolds, Eden, Foias, Nicolaenko and Temam \cite{MR1335230} proposed the concept of exponential attractor in 1994. An exponential attractor is a positively invariant, compact set which has finite fractal dimension  and attracts all bounded subsets at exponential rates. Actually, to the best of our knowledge, exponential attractors exist indeed for almost all equations with finite-dimensional global attractors.

If a dynamical system possesses an exponential attractor~$A_{exp}$, then $\mathcal{A}=\omega(A_{exp})\subseteq A_{exp}$  is its global attractor, where~$\omega(A_{exp})$ denotes the~$\omega$-limit set of $A_{exp}$. Furthermore, its global attractor $\mathcal{A}=\omega(A_{exp})$ has a finite fractal dimension.
The finiteness of the fractal dimension of the attractor is an ideal property that makes infinite dynamical systems   reducible to finite-dimensional systems. There are indeed a large number of reality-based models with finite-dimensional attractors.
However, it has been proved that the solution semigroups of many evolution equations have global attractors  with infinite fractal dimension; for instance, the~$p$-Laplace equations with $p>2$ (see \cite{MR2728546}), some reaction-diffusion equations in unbounded domains (see \cite{MR1871475,Zelik2003}), some hyperbolic equations in unbounded domains (see \cite{Zelik2001}), and so on. When a dynamical system has a global attractor of infinite fractal dimension, it has no exponential attractors, but may still have positively invariant and exponentially attractive compact sets. Based on this observation, Zhang, Kloeden, Yang and Zhong \cite{Zhangjin1} thought that the properties of exponential attractiveness and finite fractal dimension should be discussed separately, and proposed the concept of exponential decay with respect to noncompactness measure for the first time. They have proved that a sufficient and necessary condition for a dissipative dynamical system to have a positively invariant and exponentially attractive compact  set~$\mathcal{A}^*$~is that the noncompactness measure of bounded sets decays exponentially. They also gave some criteria for exponential decay with respect to noncompactness measure and proved this property for a class of wave equations with weak damping via the~$(C^{*})$~condition.

Inspired by the work of Zhang et al. \cite{Zhangjin1}, we will focus on the quantitative study of the attractive velocity of  compact semi-invariant attracting sets for dynamical systems in this paper. We notice that, when the damping is degenerate, there is no conclusion as to whether the fractal dimension of the global attractor is finite and whether the noncompactness measure decays exponentially for some dissipative systems.  Motivated by this, we put forward the concept of compact $\varphi$-attracting set (see Definition~$\ref{def21-1-31-1}$) whose attractive speed is characterized by a general non-negative decay function~$\varphi$. In particular, when~$\varphi$ is an exponential function, it is a compact exponential attracting set, whose fractal dimension is not required to be finite compared with the exponential attractor; and when~$\varphi$ is a polynomial function, it is a compact polynomial attracting set. Following the idea in \cite{Zhangjin1}, we study the specific attractive speed of compact semi-invariant attracting sets by estimating the decay speed of noncompactness measure. To this end, we propose the notion of~$\varphi$-decay with respect to  noncompactness measure (see Definition~$\ref{def20-8-5-3}$), which is a sufficient condition for a dissipative system to have a compact $\varphi$-attracting set.  We also establish several criteria for~$\varphi$-decay with respect to noncompactness measure.

In the existing literature,  the research on the attractive velocity of  attractors  is mainly limited to the range of exponential attractive velocity. Besides, there are many studies on estimating  the decay velocity of energy functionals for various dynamical systems.
However, we have not found any literature on the quantitative study of the general attractive speed of compact semi-invariant attracting sets, and this article has filled this gap  to some extent.

On the other hand, all the criteria  for exponential decay of noncompactness measure  given by Zhang et al. \cite{Zhangjin1}  depend on the compactness of a system.  In the critical case, due to the lack of compactness,  these criteria are no longer applicable.
Chueshov \cite{Chueshov2008} proposed the method of contractive function by which the existence of the global attractor can be established for the critical case. We are inspired to  establish a criterion based on contractive function for~$\varphi$-decay with respect to noncompactness measure. This criterion  only involves some rather weak compactness associated with the repeated limit inferior and requires no compactness,  which makes it  suitable for critical cases. In addition, we prove that quasi-stable systems are  exponentially decaying with respect to noncompactness measure and therefore have  compact exponential attracting sets.

 Finally, as an application of the abstract theory, we establish the existence of a compact exponential attracting set and the specific estimate of its attractive velocity for a semilinear wave equation with a critical nonlinearity. We have  proved in \cite{my1,my2} the global well-posedness, dissipativity and the existence of the global attractor for a semilinear wave equation with nonlocal weak damping and anti-damping

  \begin{equation}\label{21-8-30-5}
  u_{tt}-\Delta u+k||u_{t}||_{L^2(\Omega)}^p u_t+f(u)
=\displaystyle\int_{\Omega}K(x,y)u_{t}(y)dy+h(x)
\end{equation}
in both the subcritical and critical cases. We hope to advance the study of  this wave equation and estimate the specific attractive velocity of compact semi-invariant attracting sets under critical conditions. However, due to the degenerate  damping,  we can't get the exponential attractive velocity. So we add a linear damping term~$lu_t\ (l>0)$ (see~$(\ref{wave equa3})$  below) to this model. Then  by the criterion based on contractive function and energy estimate, we obtain the decay rate of  noncompactness measure and thus verify the existence of a compact exponential attracting set.  Furthermore,  we prove that the attractive rate of the compact exponential attracting set depends on the coefficient~$l$ of the linear damping term.

We would like to point out that after completing the present paper we have carried forward the work on this topic and  verified in \cite{my4,my5} the existence of compact polynomial attracting sets under subcritical and critical conditions  respectively for the wave equation $(\ref{21-8-30-5})$,  that is,  the case of $l=0$  in Eq. $(\ref{wave equa3})$.

The paper is organized as follows. Section 2 contains some preliminaries.
$\varphi$-decay with respect to noncompactness measure is proved to be a sufficient condition for the existence of compact $\varphi$-attracting sets in Section 3. Section 4 presents several criteria for $\varphi$-decay with respect to noncompactness measure. In Section 5, as an application,  we establish the existence of a compact exponential attracting set for a semilinear wave equation with a critical nonlinearity.

 Throughout this paper,  the symbols~$\hookrightarrow$~and~$\hookrightarrow\hookrightarrow$~stand for continuous embedding and compact embedding respectively.
The capital letter ``C" with a (possibly empty) set of subscripts will denote
a positive constant depending only on its subscripts and may vary from one
occurrence to another.

\section{Preliminaries}
We first briefly recall the definition of Kuratowski measure of noncompactness and its basic properties. For more details, we refer to \cite{K. Deimling,MR1153247}.

\begin{definition}\cite{K. Deimling,MR1153247}\label{def20-10-26-1}
 Let~$(X,d)$~be a metric space and let~$B$~be a bounded subset of~$X$.
The Kuratowski~$\alpha$-measure of noncompactness is defined by
\begin{equation*}
  \alpha(B) = \inf\{\delta>0 | B\   \text{has a finite cover of diameter} ~<\delta\}.
\end{equation*}
\end{definition}

\begin{lemma}\cite{K. Deimling,MR1153247}\label{lemma21-2-10-1}
Let~$(X,d)$~be a complete metric space and~$\alpha$~be the Kuratowski measure of noncompactness. Then
\begin{enumerate}[(i)]
  \item~$\alpha(B)=0 $~if and only if~$B$~is precompact;
  \item~$\alpha(A)\leq\alpha(B)$~whenever~$A\subseteq B$;
  \item~$\alpha(A \cup B)=\max\{\alpha(A), \alpha(B)\}$;
  \item~$\alpha(B)=\alpha(\overline{B})$, where~$\overline{B}$~is the closure of~$B$;
  \item~if~$B_1\supseteq B_2\supseteq B_3\ldots$~are nonempty closed sets in~$X$~such that~$\alpha(B_n)\rightarrow 0$~as~$n\rightarrow \infty$, then~$\cap_{n\geq1}B_n$~is nonempty and compact;
\item~if~$X$~is a Banach space, then~$\alpha(A+B)\leq \alpha(A)+\alpha(B)$.
\end{enumerate}
\end{lemma}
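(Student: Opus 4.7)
All six assertions follow from the defining property that $\alpha(B)<\delta$ exactly means that $B$ admits a finite cover by sets of diameter $<\delta$. The plan is to dispatch (ii), (iii), (iv), (vi) by straightforward manipulation of covers, and to treat (i) and (v) by invoking the equivalence ``totally bounded $\Leftrightarrow$ precompact'' valid in complete metric spaces.

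For \emph{(ii)}, I would note that any cover witnessing $\alpha(B)<\delta$ is also a cover of $A\subseteq B$, so $\alpha(A)\le\alpha(B)$. For \emph{(iii)}, the inequality ``$\ge$'' is immediate from (ii); the opposite inequality comes from concatenating a cover of $A$ of diameter $<\delta$ with a cover of $B$ of diameter $<\delta$ to form a cover of $A\cup B$ of diameter $<\delta$. For \emph{(iv)}, one direction is (ii); for the other, I would use the standard fact that $\operatorname{diam}(\overline{C})=\operatorname{diam}(C)$ in any metric space, so replacing each member of a finite cover of $B$ of diameter $<\delta$ by its closure produces a cover of $\overline{B}$ of the same diameter bound. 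For \emph{(vi)}, given finite covers $\{A_i\}$ of $A$ and $\{B_j\}$ of $B$ with diameters $<\alpha(A)+\varepsilon$ and $<\alpha(B)+\varepsilon$ respectively, the finite family $\{A_i+B_j\}$ covers $A+B$; since $\operatorname{diam}(A_i+B_j)\le\operatorname{diam}(A_i)+\operatorname{diam}(B_j)$ in a Banach space, letting $\varepsilon\downarrow 0$ yields the subadditivity.

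For \emph{(i)}, the equality $\alpha(B)=0$ is exactly the definition of total boundedness; in a complete metric space this is equivalent to $\overline{B}$ being compact, i.e. $B$ being precompact. For \emph{(v)}, I would pick $x_n\in B_n$ for each $n$ and consider the set $E=\{x_n:n\ge 1\}$. For any $N$, since $\{x_n:n\ge N\}\subseteq B_N$, property (ii) combined with the fact that finite sets have $\alpha=0$ gives $\alpha(E)\le\alpha(B_N)\to 0$, so $\alpha(E)=0$ and by (i) the sequence $(x_n)$ has a convergent subsequence with limit $x$. Closedness of each $B_N$ and the inclusion $x_n\in B_N$ for $n\ge N$ forces $x\in\bigcap_n B_n$, so the intersection is nonempty. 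Finally, $\bigcap_n B_n\subseteq B_N$ for every $N$ yields $\alpha\bigl(\bigcap_n B_n\bigr)\le\alpha(B_N)\to 0$, and applying (i) together with closedness gives compactness of the intersection.

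I expect no serious obstacle here since this is a classical list; the only point that requires genuine thought rather than cover-manipulation is (v), where the Cantor-type argument relying on (i) and completeness of $X$ is essential. Everything else is mechanical once the definition is parsed carefully and one records the two basic observations $\operatorname{diam}(\overline{C})=\operatorname{diam}(C)$ and $\operatorname{diam}(A_i+B_j)\le\operatorname{diam}(A_i)+\operatorname{diam}(B_j)$.
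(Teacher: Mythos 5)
Your proof is correct; the paper states this lemma without proof, citing \cite{K. Deimling,MR1153247}, and your argument is precisely the classical one found in those sources (covers for (ii)--(iv) and (vi), total boundedness plus completeness for (i), and the Cantor-type selection for (v)). One cosmetic point: in (v), bounding $\alpha(E)$ by $\alpha(B_N)$ requires the union property (iii) --- splitting $E$ into the finite head $\{x_1,\dots,x_{N-1}\}$, which has measure zero, and the tail contained in $B_N$ --- rather than monotonicity (ii) alone, but this is exactly the decomposition you describe and does not affect correctness.
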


Next, we will briefly review the definitions and fundamental conclusions of dynamical systems and the global attractor.
\begin{definition}\cite{Robinson2001,TEMAM,Chueshov2008}
  A dynamical system is a pair of objects~$(X, \{S(t)\}_{t\geq0})$~consisting of a
complete metric space~$X$~and a family of continuous mappings~$\{S(t)\}_{t\geq0}$~of X
into itself with the semigroup properties:
\begin{enumerate}[(i)]
  \item ~$S(0)=I$,
  \item ~$S(t+s)=S(t)S(s)$~for all~$t,s\geq0$,
\end{enumerate}
where~$X$~is called a phase
space (or state space) and~$\{S(t)\}_{t\geq0}$~is called an evolution semigroup.
\end{definition}

\begin{definition}\cite{Robinson2001,TEMAM,Chueshov2008}
Let~$\{S(t)\}_{t\geq0}$~be a semigroup on a complete metric space~$(X,d)$. A closed set~$\mathcal{B}\subseteq X$~is said to be absorbing for~$\{S(t)\}_{t\geq0}$~iff for any bounded
set~$B\subseteq X$~there exists~$t_0(B)$~$($the entering time of~$B$~into~$\mathcal{B}$$)$ such that~$S(t)B\subseteq \mathcal{B}$~for all~$t\geq t_0(B)$.
 $\{S(t)\}_{t\geq0}$~is said to be dissipative iff it possesses a bounded absorbing set.
\end{definition}

\begin{lemma}\cite{Chueshov2008}\label{20-5-5-2}
Let~$\{S(t)\}_{t\geq0}$~be a semigroup on a complete metric space~$(X,d)$. If~$\{S(t)\}_{t\geq0}$ is dissipative, then it possesses a positively invariant bounded absorbing set. To be more precise, let~$ \mathcal{B}$~be its bounded absorbing set, then~$ \mathcal{B}_0=\overline{\bigcup\limits_{t\geq t_0(\mathcal{B})}S(t)\mathcal{B}}$~is a positively invariant bounded absorbing set, where~$t_0(\mathcal{B})>0$~is the entering time of~$ \mathcal{B}$~into itself.
\end{lemma}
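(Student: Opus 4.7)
The plan is to show the three required properties of $\mathcal{B}_0=\overline{\bigcup_{t\geq t_0(\mathcal{B})}S(t)\mathcal{B}}$ in sequence: boundedness, absorbing, and positive invariance. The proof rests on two simple observations: (a) once trajectories enter $\mathcal{B}$, they stay in $\mathcal{B}$ (by absorption applied to the bounded set $\mathcal{B}$ itself), and (b) continuous maps commute with closures up to inclusion, i.e., $S(\sigma)\overline{A}\subseteq\overline{S(\sigma)A}$.

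For boundedness, I would first note that since $\mathcal{B}$ is bounded and absorbing, its entering time into itself $t_0(\mathcal{B})>0$ satisfies $S(t)\mathcal{B}\subseteq\mathcal{B}$ for every $t\geq t_0(\mathcal{B})$. Hence
\[
\bigcup_{t\geq t_0(\mathcal{B})}S(t)\mathcal{B}\;\subseteq\;\mathcal{B},
\]
and taking closures gives $\mathcal{B}_0\subseteq\overline{\mathcal{B}}$, which is bounded.

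For the absorbing property, let $B\subseteq X$ be bounded with entering time $t_0(B)$ into $\mathcal{B}$. For any $t\geq t_0(B)+t_0(\mathcal{B})$, write $t=s+t_0(B)$ with $s\geq t_0(\mathcal{B})$. Since $S(t_0(B))B\subseteq\mathcal{B}$, the semigroup property gives
\[
S(t)B=S(s)S(t_0(B))B\subseteq S(s)\mathcal{B}\subseteq\bigcup_{\tau\geq t_0(\mathcal{B})}S(\tau)\mathcal{B}\subseteq\mathcal{B}_0,
\]
so $\mathcal{B}_0$ absorbs $B$ with entering time $t_0(B)+t_0(\mathcal{B})$.

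For positive invariance, fix $\sigma\geq 0$. The semigroup property yields
\[
S(\sigma)\!\!\bigcup_{t\geq t_0(\mathcal{B})}\!\!S(t)\mathcal{B}=\bigcup_{\tau\geq\sigma+t_0(\mathcal{B})}\!\!S(\tau)\mathcal{B}\subseteq\bigcup_{t\geq t_0(\mathcal{B})}\!\!S(t)\mathcal{B}\subseteq\mathcal{B}_0.
\]
Now using continuity of $S(\sigma)$, which implies $S(\sigma)\overline{A}\subseteq\overline{S(\sigma)A}$ for any $A\subseteq X$, I get $S(\sigma)\mathcal{B}_0\subseteq\overline{\mathcal{B}_0}=\mathcal{B}_0$. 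Combined with the closedness of $\mathcal{B}_0$ (needed to match the definition of absorbing set), this completes the proof.

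No step is a serious obstacle here; the argument is a routine bookkeeping exercise. The only point requiring a little care is the continuity-of-closure step in the positive-invariance argument, since $S(\sigma)$ is continuous but the union is generally not closed — invoking $S(\sigma)\overline{A}\subseteq\overline{S(\sigma)A}$ is what makes the inclusion pass to the closure. Everything else is a direct consequence of dissipativity and the semigroup law.
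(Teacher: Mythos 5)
Your proof is correct and complete; the paper itself gives no proof of this lemma (it is cited from Chueshov--Lasiecka), and your argument is exactly the standard one: boundedness from $S(t)\mathcal{B}\subseteq\mathcal{B}$ for $t\geq t_0(\mathcal{B})$, absorption via the semigroup law with entering time $t_0(B)+t_0(\mathcal{B})$, and positive invariance via $S(\sigma)\overline{A}\subseteq\overline{S(\sigma)A}$. You also correctly flag the only two points needing care — that $t_0(\mathcal{B})$ exists because absorption applies to $\mathcal{B}$ itself, and that the closure operation is handled by continuity — so nothing is missing.
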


\begin{definition} \cite{Robinson2001,TEMAM,Chueshov2008}\label{def20-11-11-1}
A compact set~$\mathcal{A}\subseteq X$~is said to be a global attractor of the dynamical system~$(X, \{S(t)\}_{t\geq0})$~iff
\begin{enumerate}[(i)]
  \item~$\mathcal{A}\subseteq X$~is an invariant set, i.e.,~$S(t)\mathcal{A}=\mathcal{A}$~for all~$t\geq0$,
  \item~$\mathcal{A}\subseteq X$~is uniformly attracting, i.e., for all bounded set~$B\subseteq X$~we have
  \begin{equation*}
\lim_{t\rightarrow +\infty}\mathrm{dist}(S(t)B,\mathcal{A})=0,
\end{equation*}
where~$\mathrm{dist}(A,B):=\sup_{x\in A}\mathrm{dist}_{X}(x,B)$~is the Hausdorff semi-distance.
\end{enumerate}
\end{definition}

Ma, Wang and Zhong put forward the concept of~$\omega $-limit compact in \cite{MWZH} and proved that~$\omega $-limit compactness is a necessary and sufficient condition for a dissipative dynamical system to possess a global attractor.

\begin{definition}\label{def21-2-10-8}
The dynamical system~$(X, \{S(t)\}_{t\geq0})$~is called to be~$\omega$-limit compact iff for every positively invariant bounded set~$B\subseteq X$~we have~$\alpha\big(S(t)B\big)\rightarrow 0$~as~$t\rightarrow\infty$, where~$\alpha(\cdot)$~is the Kuratowski measure of noncompactness.
\end{definition}

\begin{theorem}\label{thm21-2-10-9}
The dynamical system~$(X, \{S(t)\}_{t\geq0})$~has a global attractor in~$X$~if and only if it is both dissipative and ~$\omega$-limit compact.
\end{theorem}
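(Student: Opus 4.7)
For the necessity direction, the plan is straightforward. Assume $(X,\{S(t)\}_{t\geq 0})$ has a global attractor $\mathcal{A}$. Dissipativity is immediate since $\mathcal{A}$ is compact hence bounded, so any bounded open neighborhood of $\mathcal{A}$ serves as an absorbing set by Definition~\ref{def20-11-11-1}(ii). For $\omega$-limit compactness I would fix a positively invariant bounded set $B$ and $\varepsilon>0$, then cover the compact set $\mathcal{A}$ by finitely many open balls of radius $\varepsilon/2$; by attraction, eventually $S(t)B$ lies in the $\varepsilon$-neighborhood of $\mathcal{A}$ and hence admits a finite cover by sets of diameter at most $3\varepsilon$, giving $\alpha(S(t)B)\leq 3\varepsilon$ for all sufficiently large $t$.

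For the sufficiency direction I would begin by applying Lemma~\ref{20-5-5-2} to obtain a positively invariant bounded absorbing set $\mathcal{B}_0$, and define $V_t:=\overline{\bigcup_{s\geq t}S(s)\mathcal{B}_0}$ together with the candidate attractor $\mathcal{A}:=\bigcap_{t\geq 0}V_t$. Positive invariance of $\mathcal{B}_0$ yields $S(t+r)\mathcal{B}_0=S(t)S(r)\mathcal{B}_0\subseteq S(t)\mathcal{B}_0$ for $r\geq 0$, so $V_t\subseteq\overline{S(t)\mathcal{B}_0}$. Combining Lemma~\ref{lemma21-2-10-1}(ii),(iv) with $\omega$-limit compactness will then give $\alpha(V_t)\leq\alpha(S(t)\mathcal{B}_0)\to 0$ as $t\to\infty$. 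Since the $V_t$ form a nested decreasing family of nonempty closed sets, Lemma~\ref{lemma21-2-10-1}(v) produces $\mathcal{A}$ nonempty and compact.

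The remaining tasks are invariance and uniform attraction. For invariance I would rely on the standard characterization $\mathcal{A}=\{x\in X:\exists\,t_n\to\infty,\ \exists\,x_n\in\mathcal{B}_0,\ S(t_n)x_n\to x\}$; the inclusion $S(t)\mathcal{A}\subseteq\mathcal{A}$ will follow from continuity of $S(t)$ together with the semigroup property, while $\mathcal{A}\subseteq S(t)\mathcal{A}$ uses $\omega$-limit compactness to extract a convergent subsequence from the preimages $\{S(t_n-t)x_n\}$. For attraction, since $\mathcal{B}_0$ absorbs every bounded set, it suffices to show $\mathrm{dist}(S(t)\mathcal{B}_0,\mathcal{A})\to 0$; arguing by contradiction, I would assume there exist $\delta_0>0$, $t_n\to\infty$ and $x_n\in\mathcal{B}_0$ with $\mathrm{dist}(S(t_n)x_n,\mathcal{A})\geq\delta_0$, observe that $\{S(t_n)x_n\}$ eventually lies in each $V_t$ whose noncompactness measure tends to zero, extract a convergent subsequence with limit $y$, and note that $y\in\bigcap_t V_t=\mathcal{A}$ to reach a contradiction.

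The main obstacle will be the attraction step, since this is the only place where $\omega$-limit compactness must be invoked nontrivially: extracting a convergent subsequence from an unbounded time-orbit hinges on the shrinking $\alpha(V_t)\to 0$, and the resulting limit must be identified with a point of $\mathcal{A}$ via the nested intersection and closedness of each $V_t$. The invariance proof is more routine but still requires some care in exchanging limits through the continuous semigroup maps.
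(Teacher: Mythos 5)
The paper states this theorem without proof, citing Ma--Wang--Zhong \cite{MWZH}, and your argument is the standard one from that line of work and is sound: necessity via finite $\varepsilon/2$-covers of the compact attractor combined with uniform attraction, and sufficiency via $\mathcal{A}=\omega(\mathcal{B}_0)=\bigcap_{t\geq0}V_t$, with Lemma~\ref{lemma21-2-10-1}(ii),(iv),(v) giving nonemptiness and compactness, and with $\omega$-limit compactness (together with the invariance of $\alpha$ under deletion of finitely many terms of a sequence) supplying the convergent subsequences needed in both the invariance and the attraction steps. The only cosmetic point is that the paper's definition of an absorbing set requires closedness, so in the necessity direction you should take a closed bounded neighborhood of $\mathcal{A}$ rather than an open one.
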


In the definition of global attractor, the description of its attractiveness to bounded sets is qualitative.  In fact, it may attract bounded sets arbitrarily slowly.  And there is no quantitative information about the attractive rate.  In order to  describe the attractive velocity of compact semi-invariant attracting set quantitatively,  we introduce the following concept of compact $\varphi$-attracting set.

\begin{definition}\label{def21-1-31-1}
Assume that~$\varphi:\mathds{R}^{+}\rightarrow\mathds{R}^{+}$~satisfies~$\varphi(t)\rightarrow 0$~as~$t\rightarrow +\infty$. We call a compact~$\mathcal{A}^*\subseteq X$~a compact $\varphi$-attracting set
 for the dynamical system~$(X, \{S(t)\}_{t\geq0})$, iff~$\mathcal{A}^*$ is positively invariant with respect to~$S(t)$ (namely $S(t)\mathcal{A}^*\subseteq \mathcal{A}^*$ for all~$t\geq0$) and there exists~$t_{0}\in \mathds{R}^+$~such that for every bounded set~$B\subseteq X$~there exists~$t_{B}\geq0$~such that
\begin{equation*}
\begin{split}
\mathrm{ dist}\left(S(t)B, \mathcal{A}^*\right)\leq \varphi(t+t_0-t_{B}),\ \forall t\geq t_{B}.
  \end{split}
\end{equation*}
In particular,
\begin{itemize}
  \item [(i)]~if~$\varphi(t)=Ce^{-\beta t}$~for certain positive constants~$C,\beta$, then~$\mathcal{A}^*$ is called a compact exponential attracting set;
  \item [(ii)]~if~$\varphi(t)=Ct^{-\beta}$~for certain positive constants~$C,\beta$, then~$\mathcal{A}^*$ is called a compact polynomial attracting set;
  \item [(iii)]~if~$\varphi(t)=C(\ln t)^{-\beta }$~~for certain positive constants~$C,\beta$, then~$\mathcal{A}^*$ is called a compact logarithm-polynomial attracting set.
\end{itemize}
\end{definition}


\begin{remark}We emphasize that the finiteness of fractal dimension is not required in the above definition of compact $\varphi$-attracting set.
\end{remark}

It is not difficult to verify the following lemma.

\begin{lemma}\label{lem21-8-28-1}
Let~$\{S(t)\}_{t\geq0}$~be a dynamical system on a complete metric space~$(X,d)$. Assume that ~$\{S(t)\}_{t\geq0}$~possesses a positively invariant, compact attracting set $K$, i.e., $K\subseteq X$ is compact, positively invariant with respect to~$S(t)$, and  for every bounded set~$B\subseteq X$~we have
  \begin{equation*}
\lim_{t\rightarrow +\infty}\mathrm{dist}(S(t)B,K)=0.
\end{equation*}
 Then~$\mathcal{A}=\omega(K)\subseteq K$ is its global attractor.
\end{lemma}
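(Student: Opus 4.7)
The plan is to verify the three defining properties of a global attractor for $\mathcal{A}=\omega(K)$: (a) $\mathcal{A}$ is nonempty and compact, (b) $\mathcal{A}$ is fully invariant, and (c) $\mathcal{A}$ uniformly attracts every bounded set $B\subseteq X$. Throughout I will use the standard description
\[
\omega(K)=\bigcap_{s\geq 0}\overline{\bigcup_{t\geq s}S(t)K}.
\]

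For (a), I would observe that since $K$ is positively invariant and closed, $\overline{\bigcup_{t\geq s}S(t)K}\subseteq K$ for every $s\geq 0$. Hence the sets in the intersection defining $\omega(K)$ form a decreasing family of nonempty closed subsets of the compact set $K$; by the finite intersection property, $\omega(K)$ is nonempty, and as an intersection of closed subsets of a compact set it is compact, with $\omega(K)\subseteq K$. For (b), the invariance $S(t)\omega(K)=\omega(K)$ is the usual $\omega$-limit argument: the inclusion $S(t)\omega(K)\subseteq\omega(K)$ follows from the continuity of $S(t)$, and the reverse inclusion from the compactness of $K$, which lets one extract limits of pre-images along sequences $t_n\to\infty$.

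The substantive step is (c). First, I would show that $\omega(K)$ attracts $K$ itself: if not, there exist $\varepsilon>0$, $t_n\to\infty$, and $x_n\in S(t_n)K$ with $\mathrm{dist}(x_n,\omega(K))\geq\varepsilon$; compactness of $K$ yields a convergent subsequence of $(x_n)$, and its limit belongs to $\omega(K)$ by construction, a contradiction. Thus for any $\varepsilon>0$ there exists $T_1$ with $\mathrm{dist}(S(T_1)K,\omega(K))<\varepsilon/2$. Next I would promote the continuity of $S(T_1)$ to a uniform statement on a neighborhood of $K$: by a standard compactness argument (take sequences $y_n$ with $\mathrm{dist}(y_n,K)\to 0$ and approximating $x_n\in K$, pass to a convergent subsequence, and use continuity of $S(T_1)$ at the common limit), there is $\eta>0$ such that
\[
\mathrm{dist}(y,K)<\eta \implies \mathrm{dist}\bigl(S(T_1)y,\,S(T_1)K\bigr)<\varepsilon/2.
\]
Finally, by the attraction hypothesis on $K$, choose $T_2$ with $\mathrm{dist}(S(t)B,K)<\eta$ for $t\geq T_2$. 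For $t\geq T_2+T_1$ write $S(t)B=S(T_1)S(t-T_1)B$ and combine the two bounds using the triangle inequality for the Hausdorff semi-distance $\mathrm{dist}(A,C)\leq\mathrm{dist}(A,B)+\mathrm{dist}(B,C)$ to obtain $\mathrm{dist}(S(t)B,\omega(K))<\varepsilon$, which is the desired attraction.

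The main obstacle I anticipate is exactly this transfer of attraction from $K$ to $\omega(K)$, because continuity of the semigroup is only assumed pointwise; the device that makes everything work is the compactness-based uniform continuity of the single map $S(T_1)$ in a tubular neighborhood of $K$. Once (c) is established, uniqueness and the global-attractor conclusion follow from the already-proved properties together with Theorem~\ref{thm21-2-10-9} (or simply from the standard characterization recalled in Definition~\ref{def20-11-11-1}).
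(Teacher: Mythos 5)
Your proof is correct and complete; the paper itself offers no proof of this lemma (it is stated with only the remark that it ``is not difficult to verify''), so there is nothing to compare against, and your argument is the standard one the authors presumably had in mind. In particular, you correctly identify and handle the only genuinely delicate point, namely transferring attraction from $K$ to $\omega(K)$ via the compactness-based uniform continuity of the single map $S(T_1)$ on a neighborhood of $K$, combined with the triangle inequality for the Hausdorff semi-distance.
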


Lemma~$\ref{lem21-8-28-1}$ indicates that a dynamical system with a compact $\varphi$-attracting set$ ~\mathcal{A}^*$ possesses the global attractor~$\mathcal{A}=\omega(\mathcal{A}^*)\subseteq \mathcal{A}^*$.

We introduce the following concept of~$\varphi$-decay with respect to noncompactness measure, which will be proved in Theorem ~$\ref{20-8-5-3}$ below as a sufficient condition for the existence of compact $\varphi$-attracting sets.
\begin{definition}\label{def20-8-5-3}
The dynamical system~$(X, \{S(t)\}_{t\geq0})$~is called to be~$\varphi$-decaying with respect to noncompactness measure iff it is dissipative and there exists~$t_{0}>0$~such that
  \begin{equation}\label{20-8-5-1}
  \alpha(S(t)\mathcal{B}_0)\leq\varphi(t),\ \forall t\geq t_{0},
\end{equation}
where~$\mathcal{B}_0$~is a positively invariant bounded absorbing set of~$(X, \{S(t)\}_{t\geq0})$~and~$\varphi:\mathds{R}^{+}\rightarrow\mathds{R}^{+}$~is a decreasing function satisfying~$\varphi(t)\rightarrow 0$~as~$t\rightarrow +\infty$.

In particular,
\begin{itemize}
  \item [(i)]~if~$\varphi(t)=Ce^{-\beta t}$~for certain positive constants~$C,\beta$, then~$(X, \{S(t)\}_{t\geq0})$~is said to be exponentially decaying with respect to noncompactness measure;
  \item [(ii)]~if~$\varphi(t)=Ct^{-\beta}$~~for certain positive constants~$C,\beta$, then~$(X, \{S(t)\}_{t\geq0})$~is said to be polynomially decaying with respect to noncompactness measure;
  \item [(iii)]~if~$\varphi(t)=C(\ln t)^{-\beta }$~~for certain positive constants~$C,\beta$, then~$(X, \{S(t)\}_{t\geq0})$~is said to be logarithm-polynomially decaying with respect to noncompactness measure.
\end{itemize}
\end{definition}

The following  lemma shows that the decay rate of noncompact measure of a positively invariant bounded absorbing set determines the decay rate of noncompact measure of any bounded set, which guarantees  the justification of Definition~$\ref{def20-8-5-3}$.

\begin{lemma}\label{lemma 8-20-5-12}
 Assume that the dynamical system~$(X, \{S(t)\}_{t\geq0})$~is~$\varphi$-decaying with respect to noncompactness measure, which implies that there exist a positively invariant bounded absorbing set~$\mathcal{B}_0$~and a positive constant~$t_{0}$~such that
\begin{equation*}
  \alpha(S(t)\mathcal{B}_0)\leq\varphi(t),\ \forall t\geq t_{0}.
\end{equation*}
Then for every bounded subset~$B$~of~$X$, we have
\begin{equation*}
\begin{split}
  \alpha(S(t)B)\leq\varphi(t-t_{*}(B)),\ \forall t\geq t_{*}(B)+t_{0},
  \end{split}
\end{equation*}
where~$t_{*}(B)$~is the entering time of~$B$~into~$\mathcal{B}_0$.
\end{lemma}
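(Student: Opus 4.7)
The plan is essentially bookkeeping with the semigroup law and the monotonicity of the Kuratowski measure. The idea is: once $B$ enters $\mathcal{B}_0$, the trajectory $S(t)B$ is contained in an orbit of $\mathcal{B}_0$, so its measure of noncompactness inherits the decay rate of $S(\cdot)\mathcal{B}_0$, only delayed by the entering time $t_*(B)$.

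First I would fix a bounded set $B \subseteq X$ and recall that by definition of the entering time $S(t_*(B))B \subseteq \mathcal{B}_0$. Then, for any $t \geq t_*(B) + t_0$, I would set $s := t - t_*(B) \geq t_0$ and apply the semigroup property
\begin{equation*}
S(t)B = S(s + t_*(B))B = S(s)\bigl(S(t_*(B))B\bigr) \subseteq S(s)\mathcal{B}_0.
\end{equation*}
This is the only place the positive invariance of $\mathcal{B}_0$ is implicitly useful, in that it makes $\mathcal{B}_0$ the natural reference set on which the $\varphi$-decay estimate is stated.

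Next, I would invoke Lemma~\ref{lemma21-2-10-1}(ii) (monotonicity of $\alpha$) together with the hypothesis $\alpha(S(s)\mathcal{B}_0) \leq \varphi(s)$ for $s \geq t_0$ to conclude
\begin{equation*}
\alpha(S(t)B) \leq \alpha(S(s)\mathcal{B}_0) \leq \varphi(s) = \varphi(t - t_*(B)),
\end{equation*}
which is precisely the claim. No use of the decreasing property or the vanishing of $\varphi$ at infinity is needed for the pointwise bound itself; these properties of $\varphi$ are only used elsewhere to interpret the resulting decay.

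There is no real obstacle: the statement is a direct corollary of the semigroup identity and $\alpha$-monotonicity, and is included essentially to legitimize Definition~\ref{def20-8-5-3} by showing that the particular choice of absorbing set $\mathcal{B}_0$ is inessential, up to a time shift controlled by the entering time $t_*(B)$.
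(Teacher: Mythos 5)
Your proposal is correct and follows essentially the same argument as the paper: decompose $S(t)B=S(t-t_{*}(B))S(t_{*}(B))B\subseteq S(t-t_{*}(B))\mathcal{B}_0$ via the semigroup law and the definition of the entering time, then apply the monotonicity of $\alpha$ from Lemma~\ref{lemma21-2-10-1} together with the decay hypothesis. Your added remark that neither the monotonicity nor the vanishing of $\varphi$ is needed for this pointwise bound is a correct observation, but the proof itself is identical to the paper's.
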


\begin{proof}It follows from
\begin{equation*}
  S(t)B\subseteq \mathcal{B}_0, \ \forall t\geq t_{*}(B)
\end{equation*}
that~$S(t)B=S(t-t_{*}(B))S(t_{*}(B))B\subseteq S(t-t_{*}(B))\mathcal{B}_0$.
Consequently by Lemma~$\ref{lemma21-2-10-1}$~we have
\begin{equation*}
\begin{split}
\alpha(S(t)B)\leq\alpha(S(t-t_{*}(B))\mathcal{B}_0)\leq\varphi(t-t_{*}(B)),\ \forall t\geq t_{*}(B)+t_0.
  \end{split}
\end{equation*}
\end{proof}

\section{The existence of compact $\varphi$-attracting sets}
In this section, we will  present a key theorem which illustrates that we can obtain the specific estimate of the attractive velocity of a compact positively invariant attracting set  by estimating the decay rate of the noncompactness measure.
\begin{theorem}\label{20-8-5-3}
Assume that the dynamical system~$(X, \{S(t)\}_{t\geq0})$~is~$\varphi$-decaying with respect to noncompactness measure, which implies that there exist a positively invariant bounded absorbing set~$\mathcal{B}_0$~and a positive constant~$t_{0}$~such that
\begin{equation*}
  \alpha(S(t)\mathcal{B}_0)\leq\varphi(t),\ \forall t\geq t_{0}.
\end{equation*}
Then~$(X, \{S(t)\}_{t\geq0})$~possesses a compact $\varphi$-attracting set~$\mathcal{A}^*$ such that for every bounded set~$B\subseteq X$ we have
\begin{equation}\label{20-8-5-33}
\begin{split}
\mathrm{ dist}\left(S(t)B, \mathcal{A}^*\right)\leq \varphi(t-t_{*}(B)-1),\ \forall t\geq t_{*}(B)+t_{0}+1,
  \end{split}
\end{equation}
where~$t_{*}(B)$~is the entering time of~$B$~into~$\mathcal{B}_0$.
\end{theorem}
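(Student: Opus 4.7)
The plan is to assemble $\mathcal{A}^{*}$ by taking finite $\varphi(n)$-nets of $S(n)\mathcal{B}_{0}$ at each integer time $n\geq t_{0}$ and saturating the collection by the entire forward semigroup action so as to force positive invariance. Precisely, for every integer $n\geq t_{0}$ the bound $\alpha(S(n)\mathcal{B}_{0})\leq\varphi(n)$, together with the definition of the Kuratowski measure, lets one pick a finite set $F_{n}\subseteq S(n)\mathcal{B}_{0}$ such that every point of $S(n)\mathcal{B}_{0}$ lies within distance $\varphi(n)$ of $F_{n}$. I then define
$$
\mathcal{A}^{*} := \overline{\bigcup_{n\geq t_{0}}\bigcup_{\sigma\geq 0} S(\sigma)F_{n}}.
$$

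To prove compactness I estimate $\alpha(\mathcal{A}^{*})$. Given $\varepsilon>0$, choose $N\geq t_{0}$ with $\varphi(N)<\varepsilon$ and split the double union at $n=N$. For the tail $n\geq N$, positive invariance of $\mathcal{B}_{0}$ gives $S(\sigma)F_{n}\subseteq S(\sigma+n)\mathcal{B}_{0}\subseteq S(N)\mathcal{B}_{0}$, so that piece has $\alpha\leq\varphi(N)<\varepsilon$. For the initial segment $t_{0}\leq n<N$ only finitely many $n$ appear, and each $\bigcup_{\sigma\geq 0} S(\sigma)F_{n}$ is a finite union of forward orbits $\gamma_{+}(y)$; each such orbit is precompact, because its tail past any $K\geq t_{0}$ sits in $S(K)\mathcal{B}_{0}$ (hence has $\alpha\leq\varphi(K)\to 0$) while the initial piece $\{S(\sigma)y:\sigma\in[0,K]\}$ is the continuous image of the compact interval $[0,K]$. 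Via Lemma~\ref{lemma21-2-10-1} these combine to give $\alpha(\mathcal{A}^{*})<\varepsilon$ for every $\varepsilon>0$, so $\mathcal{A}^{*}$ is compact. Positive invariance is now immediate from continuity of each $S(\tau)$:
$$
S(\tau)\mathcal{A}^{*} \subseteq \overline{\bigcup_{n\geq t_{0}}\bigcup_{\sigma'\geq\tau} S(\sigma')F_{n}} \subseteq \mathcal{A}^{*}.
$$

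For the attractive rate, fix a bounded $B\subseteq X$ and $t\geq t_{*}(B)+t_{0}+1$. Positive invariance of $\mathcal{B}_{0}$ yields $S(t)B\subseteq S(t-t_{*}(B))\mathcal{B}_{0}\subseteq S(n)\mathcal{B}_{0}$ for $n:=\lfloor t-t_{*}(B)\rfloor\geq t_{0}$, and by construction every $x\in S(n)\mathcal{B}_{0}$ lies within $\varphi(n)$ of $F_{n}\subseteq\mathcal{A}^{*}$. Since $n\geq t-t_{*}(B)-1$ and $\varphi$ is decreasing, $d(x,\mathcal{A}^{*})\leq\varphi(n)\leq\varphi(t-t_{*}(B)-1)$; taking the supremum over $x\in S(t)B$ delivers $(\ref{20-8-5-33})$.

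The main obstacle is the compactness step, since $\mathcal{A}^{*}$ is a countable union of (possibly uncountable) forward orbits and precompactness must be extracted by using the $\varphi$-decay to localize all ``late'' orbit points inside $S(N)\mathcal{B}_{0}$. A minor technical nuisance is that the infimum defining $\alpha$ may not be attained, so $F_{n}$ is only an approximate $\varphi(n)$-net; this can be absorbed by working with $(\varphi(n)+\epsilon_{n})$-nets for summable $\epsilon_{n}$, which leaves the final rate intact up to harmless constants that can be folded into $\varphi$.
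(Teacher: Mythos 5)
Your construction is essentially the paper's: finite $\varphi(n)$-nets of $S(n)\mathcal{B}_0$ at integer times $n\geq t_0$, saturated by the forward semigroup action, with the attraction estimate obtained exactly as in the paper via $S(t)B\subseteq S(\lfloor t-t_*(B)\rfloor)\mathcal{B}_0$ and the monotonicity of $\varphi$. Where you genuinely diverge is the compactness verification. The paper adjoins the global attractor $\mathcal{A}=\omega(\mathcal{B}_0)$ explicitly and argues sequentially: for $x_n=S(t_n)S(m_n)a_{i_n}^{(m_n)}$, either $t_n+m_n$ is unbounded (a subsequence converges into $\mathcal{A}$) or it is bounded (the points lie in a finite union of continuous images of compact intervals). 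You instead take the closure of the union of orbits and show directly that its Kuratowski measure vanishes, splitting at a level $N$ with $\varphi(N)<\varepsilon$: the tail lands in $S(N)\mathcal{B}_0$, and the head is a finite union of precompact orbits. Both arguments are correct and rely on the same two ingredients (decay of $\alpha(S(t)\mathcal{B}_0)$ for large times, continuity of $t\mapsto S(t)x$ on compact time intervals for the initial pieces); your version is arguably cleaner because it avoids the paper's slightly terse step of extracting a convergent subsequence into $\omega(\mathcal{B}_0)$ when $t_n+m_n\to\infty$, replacing it by monotonicity and the max-property of $\alpha$. One shared imprecision: since the infimum defining $\alpha$ need not be attained, one only gets $(\varphi(n)+\epsilon_n)$-nets, so strictly speaking the final bound acquires an additive $\epsilon_n$ that cannot in general be absorbed into $\varphi(t-t_*(B)-1)$ for an arbitrary decreasing $\varphi$; you at least flag this, whereas the paper silently asserts exact $\varphi(m)$-nets, so this does not count against you relative to the paper's own proof.
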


\begin{proof}
\textbf{Step 1}:
It follows from
\begin{equation*}
\left\{
\begin{array}{rl}
&\varphi(t)\rightarrow 0\  \text{as}\ t\rightarrow +\infty,\\
&\alpha(S(t)\mathcal{B}_0)\leq\varphi(t),\ \forall t\geq t_{0}
\end{array}
\right.
\end{equation*}
that
\begin{equation}\label{20-8-5-21}
\alpha(S(t)\mathcal{B}_0)\rightarrow 0\  (t\rightarrow +\infty),
\end{equation}
from which we can deduce that~$\mathcal{A}=\omega (\mathcal{B}_0)\equiv \bigcap_{t\geq 0}\overline{S(t)\mathcal{B}_0}$ is the global attractor of this system.

Since
\begin{equation*}
  \alpha(S(t)\mathcal{B}_0)\leq\varphi(t),\ \forall t\geq t_{0},
\end{equation*}
for every positive integer~$m\geq t_{0}$, there exists \[E_m=\bigcup_{i=1}^{K_m}S(m)a_i^{(m)},\  a_i^{(m)}\in \mathcal{B}_0,\] such that $S(m)\mathcal{B}_0\subseteq \bigcup_{x_{\lambda}\in E_m}B(x_{\lambda},\varphi(m))$.

Let~$F_m=\bigcup_{t\geq0}S(t)E_m=\bigcup_{t\geq0}S(t)\bigcup_{i=1}^{K_m}S(m)a_i^{(m)},\  a_i^{(m)}\in \mathcal{B}_0$. Take~$\mathcal{A}^*=\bigcup_{m\in \mathds{N},~m\geq t_{0}}F_m\bigcup\mathcal{A}$. It is obvious that $\mathcal{A}^*$ is positively invariant.

Since~$\mathcal{A}^*\supseteq\bigcup_{m=[t_0]+1}^{+\infty}E_m$~(where~$[t_0]$~is the integer part
 of~$t_0$),
\begin{equation}\label{20-8-5-2}
  \mathrm{dist}\left(S(m)\mathcal{B}_0, \mathcal{A}^*\right)\leq \varphi(m), \ \forall m\geq t_{0}.
\end{equation}
By the positive invariance of~$\mathcal{B}_0$,~$S(t)\mathcal{B}_0\subseteq S([t])\mathcal{B}_0$. Hence we deduce from the monotonicity of $\varphi(t)$ and $(\ref{20-8-5-2})$ that
\begin{equation}\label{20-8-5-4}
\begin{split}
 \mathrm{dist}\left(S(t)\mathcal{B}_0, \mathcal{A}^*\right)\leq &\mathrm{dist}\left(S([t])\mathcal{B}_0, \mathcal{A}^*\right)\\
 \leq &\varphi([t])\\
 \leq &\varphi(t-1)
  \end{split}
\end{equation}
holds for all~$t\geq t_{0}+1$.
For every bounded set~$B\subseteq X$, there exists~$t_{*}(B)$ such that
\begin{equation*}
  S(t)B\subseteq \mathcal{B}_0, \ \forall t\geq t_{*}(B).
\end{equation*}
Therefore~$S(t)B=S(t-t_{*}(B))S(t_{*}(B))B\subseteq S(t-t_{*}(B))\mathcal{B}_0$, and thus for every $t\geq t_{*}(B)+t_{0}+1$ we have
\begin{equation}\label{20-8-5-5}
\begin{split}
 \mathrm{dist}\left(S(t)B, \mathcal{A}^*\right)\leq &\mathrm{dist}\left(S(t-t_{*}(B))\mathcal{B}_0, \mathcal{A}^*\right)\\
 \leq &\varphi(t-t_{*}(B)-1).
  \end{split}
\end{equation}

\textbf{Step 2}:~Next we shall verify that $\mathcal{A}^*$~is compact, i.e., every sequence $\{x_n\}_{n=1}^{+\infty}\subseteq\mathcal{A}^*$ has a subsequence which converges to a point in $\mathcal{A}^*$.
Since the global attractor~$\mathcal{A}$~is compact, if there exists a subsequence $\{x_{n_k}\}_{k=1}^{+\infty}\subseteq\mathcal{A}$, then $\{x_{n_k}\}_{k=1}^{+\infty}$ has a convergent subsequence. Therefore, without loss of generality, we can assume that
~$\{x_n\}_{n=1}^{+\infty}\subseteq\bigcup_{m\in \mathds{N},~m\geq t_{0}}F_m$. Write
\begin{equation}\label{20-8-5-10}
x_n=S(t_n)S(m_n)a_{i_n}^{(m_n)}, \ t_n\geq 0, m_n\geq t_{0}, 1\leq i_n\leq k_{m_n}, a_{i_n}^{(m_n)}\in \mathcal{B}_0.
\end{equation}
\begin{itemize}
  \item[(i)] If~$\{t_n+m_n\}_{n=1}^{+\infty}$ is unbounded, then there exists a subsequence~$\{n_k\}$~of~$\{n\}$ such that $t_{n_k}+m_{n_k}\rightarrow+\infty$ as $k\rightarrow+\infty$. Hence we can deduce from $(\ref{20-8-5-21})$~that there exists a subsequence of $\{x_n\}_{n=1}^{+\infty}$~ convergent in~$\mathcal{A}=\omega(\mathcal{B}_0)\subseteq \mathcal{A}^*$.
  \item[(ii)] If there exists a positive integer~$N_0$~such that~$t_n+m_n\leq N_0$ for all $n\in \mathds{N}$, then~$\{x_n\}\subseteq \bigcup_{m\in \mathds{N},~t_{0}\leq m\leq N_0 }\bigcup_{i=1}^{K_m}\bigcup_{t\in [0,N_0]}S(t)S(m)a_i^{(m)}$. For given~$m$~and~$i$, mapping $t\rightarrow S(t)S(m)a_i^{(m)}$ is continuous, and $[0,N_0]$ is a compact set, so $\bigcup_{t\in [0,N_0]}S(t)S(m)a_i^{(m)}$ is compact, and thus
      \[\bigcup_{m\in \mathds{N},\ t_{0}\leq m\leq N_0 }\bigcup_{i=1}^{K_m}\bigcup_{t\in [0,N_0]}S(t)S(m)a_i^{(m)}\] is also compact.
      Consequently, there exists a subsequence of $\{x_n\}_{n=1}^{+\infty}$ convergent in \[\bigcup_{m\in \mathds{N}, t_{0}\leq m\leq N_0 }\bigcup_{i=1}^{K_m}\bigcup_{t\in [0,N_0]}S(t)S(m)a_i^{(m)}\subseteq\mathcal{A}^*.\]
\end{itemize}

This finishes the proof.
\end{proof}

\begin{remark}\label{remark20-9-14-1}
The idea of constructing a compact $\varphi$-attracting set is to add a countable collection of points to the global attractor such that the added points  can attract a positively  invariant bounded absorbing set~$\mathcal{B}_0$ at the so-called~$\varphi$-speed and thus can  attract any bounded set~$B$ at the~$\varphi$-speed. The decay rate of the noncompactness measure $\alpha$ of a positively invariant bounded absorbing  set $\mathcal{B}_0$
\begin{equation*}
  \alpha(S(t)\mathcal{B}_0)\leq\varphi(t),\ \forall t\geq t_{0} \ (\text{see~$(\ref{20-8-5-1})$})
\end{equation*}
can guarantee that the union of the global attractor and these added points
 is compact. Therefore,  the decay rate with respect to noncompact measure $\alpha$ essentially  characterizes the attractive rate that a positively invariant compact set may reach.
\end{remark}

Due to the non-uniqueness of finite covers of $S(m)\mathcal{B}_0$ by open balls of radius less than $\varphi(m)$, compact $\varphi$-attracting set~$\mathcal{A}^*$~
constructed in  Theorem~$\ref{20-8-5-3}$ is not unique. The following corollary illustrates that for any $\epsilon>0$,  a dynamical system which is~$\varphi$-decaying with respect to noncompactness measure  may have a more regular compact $(\epsilon+1)\varphi$-attracting set.
\begin{corollary}\label{cor20-8-10-1}
Assume that the dynamical system~$(X, \{S(t)\}_{t\geq0})$~is~$\varphi$-decaying with respect to noncompactness measure, which implies that there exist a positively invariant bounded absorbing set~$\mathcal{B}_0$~and a positive constant~$t_{0}$~such that
\begin{equation*}
  \alpha(S(t)\mathcal{B}_0)\leq\varphi(t),\ \forall t\geq t_{0}.
\end{equation*}
Let $D$ be a dense subset of $\mathcal{B}_0$. Then for any $\epsilon>0$, there exists a positively invariant compact set $\mathcal{A}^*$ satisfying:
\begin{itemize}
  \item [(i)]~for every bounded set~$B\subseteq X$~we have
\begin{equation}\label{20-8-10-33}
\begin{split}
 \mathrm{dist}\left(S(t)B, \mathcal{A}^*\right)\leq (\epsilon+1)\varphi(t-t_{*}(B)-1),\ \forall t\geq t_{*}(B)+t_{0}+1,
  \end{split}
\end{equation}
where~$t_{*}(B)$~satisfies
\begin{equation*}
  S(t)B\subseteq \mathcal{B}_0, \ \forall t\geq t_{*}(B);
\end{equation*}
  \item  [(ii)]~$\mathcal{A}^*=\mathcal{A}\cup \mathcal{A}_1$, where $\mathcal{A}$ is the global attractor of $(X, \{S(t)\}_{t\geq0})$, $\mathcal{A}_1$~is a set of orbits starting from  points in $D$.
\end{itemize}
\end{corollary}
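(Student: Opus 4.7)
The plan is to revisit the construction of $\mathcal{A}^*$ in the proof of Theorem~\ref{20-8-5-3}, but to replace each ``cover centre'' $S(m)a_i^{(m)}$ there by a nearby point of the form $S(m)d_i^{(m)}$ with $d_i^{(m)}\in D$, exploiting the density of $D$ in $\mathcal{B}_0$ together with the continuity of $S(m)$. This perturbation enlarges the covering radius by an arbitrary factor $1+\epsilon$, which is precisely the loss appearing in $(\ref{20-8-10-33})$.

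First, for every integer $m\geq t_0$ I would use $\alpha(S(m)\mathcal{B}_0)\leq\varphi(m)$ to extract a finite family $\{a_i^{(m)}\}_{i=1}^{K_m}\subseteq\mathcal{B}_0$ with $S(m)\mathcal{B}_0\subseteq\bigcup_i B(S(m)a_i^{(m)},\varphi(m))$. Since $S(m)$ is continuous at each $a_i^{(m)}$, I can choose $\delta_{m,i}>0$ so that $d(x,a_i^{(m)})<\delta_{m,i}$ implies $d(S(m)x,S(m)a_i^{(m)})<\epsilon\varphi(m)$, and then use the density of $D$ in $\mathcal{B}_0$ to select $d_i^{(m)}\in D$ with $d(d_i^{(m)},a_i^{(m)})<\delta_{m,i}$. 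The triangle inequality gives
\begin{equation*}
S(m)\mathcal{B}_0\subseteq\bigcup_{i=1}^{K_m}B\!\left(S(m)d_i^{(m)},(1+\epsilon)\varphi(m)\right).
\end{equation*}

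Next, mirroring the construction in Theorem~\ref{20-8-5-3}, I would set $\widetilde{E}_m=\{S(m)d_i^{(m)}:1\leq i\leq K_m\}$, $\widetilde{F}_m=\bigcup_{t\geq 0}S(t)\widetilde{E}_m$ and define
\begin{equation*}
\mathcal{A}_1=\bigcup_{m\in\mathds{N},\,m\geq t_0}\widetilde{F}_m,\qquad \mathcal{A}^*=\mathcal{A}\cup\mathcal{A}_1.
\end{equation*}
Every point of $\mathcal{A}_1$ has the form $S(t+m)d_i^{(m)}$ with $t\geq 0$, so it lies on the positive orbit of some $d_i^{(m)}\in D$, which yields (ii). Positive invariance of $\mathcal{A}^*$ is immediate: $S(\tau)\mathcal{A}=\mathcal{A}$ and $S(\tau)\widetilde{F}_m\subseteq\widetilde{F}_m$. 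The attractivity bound $(\ref{20-8-10-33})$ is then obtained by repeating Step~1 of Theorem~\ref{20-8-5-3} verbatim, with $\varphi(m)$ replaced by $(1+\epsilon)\varphi(m)$: the inclusion $\widetilde{E}_m\subseteq\mathcal{A}^*$ gives $\mathrm{dist}(S(m)\mathcal{B}_0,\mathcal{A}^*)\leq(1+\epsilon)\varphi(m)$, and the monotonicity of $\varphi$ together with positive invariance of $\mathcal{B}_0$ upgrades this to $\mathrm{dist}(S(t)\mathcal{B}_0,\mathcal{A}^*)\leq(1+\epsilon)\varphi(t-1)$ for $t\geq t_0+1$; composing with the entering time $t_*(B)$ delivers $(\ref{20-8-10-33})$.

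Compactness of $\mathcal{A}^*$ would be verified by the same dichotomy as in Step~2 of Theorem~\ref{20-8-5-3}. Given $\{x_n\}\subseteq\mathcal{A}_1$ write $x_n=S(t_n+m_n)d_{i_n}^{(m_n)}$ with $t_n\geq 0$ and $m_n\geq t_0$; if $t_n+m_n$ is unbounded, then $x_n\in S(t_n+m_n)\mathcal{B}_0$ and $(\ref{20-8-5-21})$ forces a subsequence to converge to a point of the compact attractor $\mathcal{A}\subseteq\mathcal{A}^*$, whereas if $t_n+m_n\leq N_0$ then only finitely many indices $(m,i)$ occur and $t_n\in[0,N_0]$, so $\{x_n\}$ lies in a finite union of continuous images of a compact interval, hence in a compact subset of $\mathcal{A}^*$.

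The only delicate point is the simultaneous choice of the approximants $d_i^{(m)}$: $S(m)$ is only pointwise continuous, so the tolerance $\delta_{m,i}$ depends on both $m$ and $i$. However, for each fixed $m$ the index $i$ runs over a \emph{finite} set, and no uniformity across $m$ is needed since the different values of $m$ are handled independently; this is exactly where the density of $D$ in $\mathcal{B}_0$ is used.
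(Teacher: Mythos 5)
Your proposal is correct and follows essentially the same route as the paper's own proof: replace each cover centre $S(m)a_i^{(m)}$ by $S(m)b_i^{(m)}$ with $b_i^{(m)}\in D$ chosen via continuity of $S(m)$ and density of $D$, enlarge the covering radius to $(1+\epsilon)\varphi(m)$, and then rerun the construction and the compactness/attraction arguments of Theorem~\ref{20-8-5-3}. Your closing remark on the pointwise (non-uniform) continuity of $S(m)$ and the finiteness of the index set for each fixed $m$ is a correct and slightly more careful articulation of a step the paper states without comment.
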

\begin{proof}Since
\begin{equation*}
  \alpha(S(t)\mathcal{B}_0)\leq\varphi(t),\ \forall t\geq t_{0},
\end{equation*}
for every positive integer~$m\geq t_{0}$, there exists
\begin{equation*}
E_m=\cup_{i=1}^{K_m}S(m)a_i^{(m)},\  a_i^{(m)}\in \mathcal{B}_0,
\end{equation*}
such that
\begin{equation}\label{20-8-10-34}
  S(m)\mathcal{B}_0\subseteq\cup_{i=1}^{K_m}B(S(m)a_i^{(m)},\varphi(m)).
\end{equation}
Because~$S(t):X\rightarrow X$ is continuous and $D$ is dense in~$\mathcal{B}_0$, there exists $b_i^{(m)}\in D$ such that
\begin{equation}\label{20-8-10-35}
d(S(m)b_i^{(m)},S(m)a_i^{(m)})<\epsilon \varphi(m).
\end{equation}
Let\begin{equation*}
E_m'=\cup_{i=1}^{K_m}S(m)b_i^{(m)}.
\end{equation*}
It follows from~$(\ref{20-8-10-34})$~and~$(\ref{20-8-10-35})$~that
\begin{equation}\label{20-8-10-36}
  S(m)\mathcal{B}_0\subseteq\cup_{i=1}^{K_m}B(S(m)b_i^{(m)},(1+\epsilon)\varphi(m))\equiv\mathcal{N}_{(1+\epsilon)\varphi(m)}E_m' , \forall m\in \mathds{N}, \ m\geq t_0.
\end{equation}
Take~$\mathcal{A}_1=\cup_{m\in \mathds{N},~m\geq t_{0}}\cup_{t\geq0}S(t)E_m'$, $\mathcal{A}^*=\mathcal{A}_1\cup\mathcal{A}$. Obviously, $\mathcal{A}_1$ is a set of orbits starting from  points in $D$ and $\mathcal{A}^*$ is positively invariant. Analysis similar to that in the second step in the proof of Theorem~$\ref{20-8-5-3}$~shows that $\mathcal{A}^*$ is compact. From~$(\ref{20-8-10-36})$~we have
\begin{equation}\label{20-8-10-44}
  S(m)\mathcal{B}_0\subseteq\mathcal{N}_{(1+\epsilon)\varphi(m)}\mathcal{A}^* , \forall m\in \mathds{N}, \ m\geq t_0.
\end{equation}

For every~$B\subseteq X$, there exists~$t_{*}(B)$ such that
\begin{equation*}
  S(t)B\subseteq \mathcal{B}_0, \ \forall t\geq t_{*}(B).
\end{equation*}
Hence
\begin{equation}\label{20-8-10-46}
  S(t)B=S(t-t_{*}(B))S(t_{*}(B))B\subseteq S(t-t_{*}(B))\mathcal{B}_0\subseteq S([t-t_{*}(B)])\mathcal{B}_0.
\end{equation}
We deduce from~$(\ref{20-8-10-44})$~,~$(\ref{20-8-10-46})$~and the  monotonicity of $\varphi$~that
\begin{equation*}
\begin{split}
  \mathrm{dist}(S(t)B,\mathcal{A}^*)\leq&(1+\epsilon)\varphi([t-t_{*}(B)])\\
  \leq &(1+\epsilon)\varphi(t-t_{*}(B)-1)
\end{split}\end{equation*}
holds for all $t\geq t_0+t_{*}(B)+1$.

The proof is completed.
\end{proof}

\begin{remark}
The significance of  Corollary~$\ref{cor20-8-10-1}$  lies in that $\mathcal{A}^{*}$ constructed here  often has higher regularity since the global attractor $\mathcal{A}$ is often more regular than the phase space and the phase space often has dense subspaces with higher regularity.
\end{remark}

\section{Criteria for $\varphi$-decay with respect to noncompactness measure}

In this section, we will give several criteria for $\varphi$-decay with respect to noncompactness measure.
\begin{theorem}\label{20-8-8-1}
Let~$(X, \{S(t)\}_{t\geq0})$~be a dissipative dynamical system and $\mathcal{B}_0$ be its positively invariant bounded absorbing set. If there exist a compact set $A\subseteq X$ and a positive constant $t_{0}$ such that
\begin{equation}\label{20-8-8-2}
\begin{split}
 \mathrm{dist}\left(S(t)\mathcal{B}_0, A\right)\leq \varphi(t),\ \forall t\geq t_{0},
  \end{split}
\end{equation}
where function~$\varphi:\mathds{R}^{+}\rightarrow\mathds{R}^{+}$~satisfies~$\varphi(t)\rightarrow 0$ as $t\rightarrow +\infty$, then ~$\{S(t)\}_{t\geq0}$~is $2\varphi$-decaying with respect to noncompactness measure.
\end{theorem}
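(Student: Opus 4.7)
The plan is to exploit the compactness of $A$ to produce, for any $\delta>0$, a finite cover of $A$ by sets of diameter less than $\delta$, and then to fatten these sets by $\varphi(t)$ so as to cover $S(t)\mathcal{B}_0$. The key observation is that enlarging a set by a $\rho$-neighborhood increases its diameter by at most $2\rho$, so the resulting cover of $S(t)\mathcal{B}_0$ will have diameter at most $\delta+2\varphi(t)$, and sending $\delta\to 0$ will yield $\alpha(S(t)\mathcal{B}_0)\leq 2\varphi(t)$ for every $t\geq t_0$.

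More concretely, first I would fix $t\geq t_0$ and $\delta>0$. Since $A$ is compact, $\alpha(A)=0$ by Lemma~\ref{lemma21-2-10-1}(i), so there exist finitely many sets $U_1,\ldots,U_N\subseteq X$ with $\mathrm{diam}(U_i)<\delta$ and $A\subseteq\bigcup_{i=1}^N U_i$. Second, by the hypothesis $\mathrm{dist}(S(t)\mathcal{B}_0,A)\leq\varphi(t)$, every point $x\in S(t)\mathcal{B}_0$ satisfies $\mathrm{dist}_X(x,A)\leq\varphi(t)$, hence $\mathrm{dist}_X(x,U_i)\leq\varphi(t)$ for some $i$ (strictly, one should pick $a\in A$ with $d(x,a)\leq\varphi(t)+\eta$ for arbitrarily small $\eta>0$; letting $\eta\to 0$ at the end does not change the final bound). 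Consequently, setting $\widetilde{U}_i$ to be the closed $\varphi(t)$-neighborhood of $U_i$, one has $S(t)\mathcal{B}_0\subseteq\bigcup_{i=1}^N\widetilde{U}_i$ with $\mathrm{diam}(\widetilde{U}_i)\leq \mathrm{diam}(U_i)+2\varphi(t)<\delta+2\varphi(t)$.

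Third, by the definition of the Kuratowski measure, this finite cover gives
\begin{equation*}
\alpha(S(t)\mathcal{B}_0)\leq \delta+2\varphi(t).
\end{equation*}
Since $\delta>0$ was arbitrary, I conclude that $\alpha(S(t)\mathcal{B}_0)\leq 2\varphi(t)$ for all $t\geq t_0$, which is exactly the condition $(\ref{20-8-5-1})$ with $\varphi$ replaced by $2\varphi$. Combined with the dissipativity of $\{S(t)\}_{t\geq 0}$, this shows that the system is $2\varphi$-decaying with respect to noncompactness measure.

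I do not expect any genuine obstacle. The only mild technical point is verifying the bound on $\mathrm{diam}(\widetilde{U}_i)$ and the set-inclusion $S(t)\mathcal{B}_0\subseteq \bigcup \widetilde{U}_i$; both are standard but require a careful use of the triangle inequality and of the definition of the Hausdorff semi-distance (taking an infimum in $A$ rather than in the union of the $U_i$'s). One should also confirm that $\varphi$ need not be monotone for this argument, which is why the statement can be made at every individual $t\geq t_0$ rather than only after passing to an integer part.
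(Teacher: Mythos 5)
Your proof is correct and follows essentially the same route as the paper: both arguments use compactness of $A$ to produce a fine finite cover (the paper uses a finite $\epsilon$-net of points, you use sets of diameter $<\delta$), push $S(t)\mathcal{B}_0$ into a $\varphi(t)$-fattening of that cover via the Hausdorff semi-distance hypothesis, and let the auxiliary parameter tend to zero to obtain $\alpha(S(t)\mathcal{B}_0)\leq 2\varphi(t)$ for each $t\geq t_0$. Your bookkeeping of where the factor $2$ comes from (a $\rho$-neighborhood enlarges diameter by at most $2\rho$) matches the paper's computation with balls of radius $\varphi(t)+2\epsilon$, and your remark that no monotonicity of $\varphi$ is needed is accurate.
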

\begin{proof}
Let $\epsilon$ be an arbitrary positive number.

Since~$A$ is compact, $A$ has a finite $\epsilon$-net ${x_1,x_2,\ldots,x_n}$, i.e., $A\subseteq \cup_{i=1}^{n}B(x_i,\epsilon)$. By ~$(\ref{20-8-8-2})$, for every~$x\in \mathcal{B}_0$, there exists $y\in A$ such that
\begin{equation*}
  d(S(t)x,y)<\varphi(t)+\epsilon,\ \forall t\geq t_{0}.
\end{equation*}
Therefore there exists~$i\in{1,2,\ldots,n}$~such that
\begin{equation*}
   d(S(t)x,x_i)< d(S(t)x,y)+ d(y,x_i)\leq\varphi(t)+\epsilon+\epsilon,
\end{equation*}
i.e.,~$S(t)\mathcal{B}_0\subseteq \cup_{i=1}^{n}B(x_i,\varphi(t)+2\epsilon)$.
Consequently, by the definition of  Kuratowski's noncompactness measure, we obtain~$\alpha\big(S(t)\mathcal{B}_0\big)\leq2\varphi(t)\ (\forall t\geq t_{0})$ and thus complete the proof.
\end{proof}

\begin{theorem}Let~$\{S(t)\}_{t\geq0}$~be a dissipative semigroup in a~Banach~space~$(X,\|\cdot\|)$ and $\mathcal{B}_0$~be its positively invariant bounded absorbing set. Assume that function~$\varphi:\mathds{R}^{+}\rightarrow\mathds{R}^{+}$~satisfies~$\varphi(t)\rightarrow 0$ as $t\rightarrow +\infty$. If there exists a positive constant $t_0$~such that for any~$\epsilon>0$, there exists a finite-dimensional subspace~$X_1\subseteq X$, such that
\begin{equation}\label{20-8-9-1}
\sup_{x\in \mathcal{B}_0}\|(I-P)S(t)x\|\leq\varphi(t)+\epsilon,\ \forall t\geq t_0,
\end{equation}
where~$P:X\rightarrow X_1$~is a bounded projection operator,
then $\{S(t)\}_{t\geq0}$~is $2\varphi$-decaying with respect to noncompactness measure.
\end{theorem}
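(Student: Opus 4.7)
The plan is to exploit the decomposition $S(t)x=PS(t)x+(I-P)S(t)x$ and use that the projected component lives in a finite-dimensional subspace while the complementary component is uniformly small. Concretely, I would fix an arbitrary $\epsilon>0$ and an arbitrary $t\geq t_{0}$, invoke the hypothesis to obtain a finite-dimensional subspace $X_{1}\subseteq X$ and a bounded projection $P:X\to X_{1}$ satisfying $\sup_{x\in\mathcal{B}_{0}}\|(I-P)S(t)x\|\leq\varphi(t)+\epsilon$, and then estimate $\alpha(S(t)\mathcal{B}_{0})$ by decomposing $S(t)\mathcal{B}_{0}\subseteq PS(t)\mathcal{B}_{0}+(I-P)S(t)\mathcal{B}_{0}$.

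For the first summand, note that $\mathcal{B}_{0}$ is bounded and positively invariant, so $S(t)\mathcal{B}_{0}\subseteq\mathcal{B}_{0}$ is bounded; since $P$ is a bounded linear operator into the finite-dimensional space $X_{1}$, the image $PS(t)\mathcal{B}_{0}$ is a bounded subset of $X_{1}$, hence precompact, and therefore $\alpha(PS(t)\mathcal{B}_{0})=0$ by Lemma~\ref{lemma21-2-10-1}(i). For the second summand, the hypothesis gives that $(I-P)S(t)\mathcal{B}_{0}$ is contained in a ball of radius $\varphi(t)+\epsilon$ centered at the origin, so it can be covered by a single set of diameter $2(\varphi(t)+\epsilon)$, whence $\alpha\bigl((I-P)S(t)\mathcal{B}_{0}\bigr)\leq 2(\varphi(t)+\epsilon)$.

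Combining these via Lemma~\ref{lemma21-2-10-1}(vi) (subadditivity in Banach spaces) and monotonicity (Lemma~\ref{lemma21-2-10-1}(ii)) yields
\begin{equation*}
\alpha(S(t)\mathcal{B}_{0})\leq\alpha(PS(t)\mathcal{B}_{0})+\alpha\bigl((I-P)S(t)\mathcal{B}_{0}\bigr)\leq 2\varphi(t)+2\epsilon
\end{equation*}
for every $t\geq t_{0}$. Since $\epsilon>0$ was arbitrary, letting $\epsilon\to 0^{+}$ gives $\alpha(S(t)\mathcal{B}_{0})\leq 2\varphi(t)$ for all $t\geq t_{0}$, which is precisely the required $2\varphi$-decay with respect to noncompactness measure.

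This proof is essentially routine once the right decomposition is identified; I do not anticipate any real obstacle. The only subtle point is that $X_{1}$ and $P$ depend on $\epsilon$, so one cannot pass to the limit inside the projection, but this causes no difficulty because the estimate $\alpha(S(t)\mathcal{B}_{0})\leq 2\varphi(t)+2\epsilon$ is already independent of the choice of $(X_{1},P)$ and holds for every $\epsilon>0$ at the same fixed $t$.
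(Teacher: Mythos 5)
Your proposal is correct and follows essentially the same route as the paper's proof: the decomposition $S(t)\mathcal{B}_0\subseteq PS(t)\mathcal{B}_0+(I-P)S(t)\mathcal{B}_0$, the vanishing of $\alpha$ on the finite-dimensional part, the bound $2\varphi(t)+2\epsilon$ on the complementary part, subadditivity, and the passage $\epsilon\to 0^+$. Your remark that the $\epsilon$-dependence of $(X_1,P)$ is harmless is exactly the point the paper relies on implicitly.
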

\begin{proof}
As a bounded set of
 finite-dimensional subspace  $X_1$,  $PS(t)\mathcal{B}_0$ is precompact, and thus
\begin{equation}\label{20-8-10-22}
 \alpha\big(PS(t)\mathcal{B}_0\big)=0.
\end{equation}
By the definition of Kuratowski's  noncompactness measure, $(\ref{20-8-9-1})$~implies that
\begin{equation}\label{20-8-10-23}
 \alpha\big((I-P)S(t)\mathcal{B}_0\big)\leq 2\varphi(t)+2\epsilon,\ \forall t\geq t_0.
\end{equation}
Since~$S(t)\mathcal{B}_0\subseteq PS(t)\mathcal{B}_0+(I-P)S(t)\mathcal{B}_0$,  combining~$(\ref{20-8-10-22})$~and~$(\ref{20-8-10-23})$~yields that
\begin{equation*}
  \alpha\big(S(t)\mathcal{B}_0\big)\leq\alpha\big(PS(t)\mathcal{B}_0\big)+\alpha\big((I-P)S(t)\mathcal{B}_0\big)\leq 2\varphi(t)+2\epsilon,\ \forall t\geq t_0.
\end{equation*}
By the arbitrariness of~$\epsilon$, we have
\begin{equation*}
  \alpha\big(S(t)\mathcal{B}_0\big)\leq 2\varphi(t),\ \forall t \geq t_0,
\end{equation*}
which completes the proof.
\end{proof}
The next criterion  is based on the method of contractive function  and only involves some
rather weak compactness associated with the repeated limit inferior, therefore it can be used to  prove $\varphi$-decay with respect to noncompact measure for infinite-dimensional dynamical systems under critical conditions.
\begin{definition}
Let $X$ be a complete metric space and $B$ be a bounded subset of $X$. Function ~$\Phi:X\times X\rightarrow \mathds{R^{+}}$~ is said to be contractive on $B\times B$ if it satisfies the following conditions:
\begin{equation*}
 \liminf_{m\rightarrow\infty} \liminf_{n\rightarrow\infty} \Phi(y_n,y_m)=0, \forall \{y_n\}\subseteq B.
\end{equation*}
We denote by~$\mathrm{Contr}(B)$ the set of all contractive functions on $B\times B$.
\end{definition}

\begin{theorem} \label{20-7-24-35}
Let~$\{S(t)\}_{t\geq0}$~be a semigroup on a complete metric space $(X,d)$ and $B\subseteq X$ be its positively invariant bounded set. Assume that function~$\varphi:\mathds{R}^{+}\rightarrow\mathds{R}^{+}$~satisfies~$\varphi(t)\rightarrow 0$ as $t\rightarrow +\infty$ and $T$~is a positive constant. If for every~$t\geq T$~and every~$\epsilon>0$, there exists $\Phi_{t,\epsilon}\in \mathrm{Contr}(B)$~such that
\begin{equation}\label{20-7-24-36}
d(S(t)y_1,S(t)y_2)\leq \varphi(t)+\epsilon+\Phi_{t,\epsilon}(y_1,y_2), \forall y_1,y_2\in B,
\end{equation}
then
\begin{equation}\label{20-7-24-37}
\begin{split}
\alpha(S(t)B)\leq 3\varphi(t),\ \forall t\geq T.
 \end{split}\end{equation}
\end{theorem}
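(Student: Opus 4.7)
My approach is a contradiction argument that converts quantitative non-compactness of $S(t)B$, measured by $\alpha$, into a uniform positive lower bound on $\Phi_{t,\epsilon}$ along a carefully chosen sequence, thereby violating the contractivity condition. First, fix $t\geq T$ and suppose, towards contradiction, that $\alpha(S(t)B)>3\varphi(t)$; then there exists $\sigma>0$ with $\alpha(S(t)B)>3\varphi(t)+3\sigma$. Apply the hypothesis $(\ref{20-7-24-36})$ with this choice $\epsilon=\sigma$ to produce a single contractive function $\Phi:=\Phi_{t,\sigma}\in\mathrm{Contr}(B)$ satisfying
$$d(S(t)y_1,S(t)y_2)\leq \varphi(t)+\sigma+\Phi(y_1,y_2),\qquad \forall\, y_1,y_2\in B.$$

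Second, I would extract a separated sequence. Using the standard comparison $\chi(A)\geq \alpha(A)/2$ between the Hausdorff and Kuratowski measures of noncompactness, the bound $\alpha(S(t)B)>3\varphi(t)+3\sigma$ yields $\chi(S(t)B)>\frac{3}{2}(\varphi(t)+\sigma)$, so $S(t)B$ admits no finite cover by closed balls of a radius $r$ slightly larger than $\varphi(t)+\sigma$. A greedy inductive construction (pick $z_{n+1}\in S(t)B\setminus\bigcup_{i\leq n}B(z_i,r)$) then produces $\{y_n\}\subseteq B$ with
$$d(S(t)y_n,S(t)y_m)>r>\varphi(t)+\sigma, \qquad \forall\, n\neq m.$$
Substituting into the displayed inequality gives $\Phi(y_n,y_m)>r-\varphi(t)-\sigma>0$ for every $n\neq m$, whence
$$\liminf_{m\to\infty}\liminf_{n\to\infty}\Phi(y_n,y_m)\geq r-\varphi(t)-\sigma>0,$$
contradicting $\Phi\in\mathrm{Contr}(B)$. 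Thus $\alpha(S(t)B)\leq 3\varphi(t)+3\sigma$ for every $\sigma>0$, and letting $\sigma\to 0$ yields $(\ref{20-7-24-37})$.

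\textbf{Main obstacle.} The delicate point is the interaction between the $\epsilon$-dependence of the contractive function $\Phi_{t,\epsilon}$ and the desired $\epsilon$-free conclusion $\alpha(S(t)B)\leq 3\varphi(t)$: because $\Phi_{t,\epsilon}$ varies with $\epsilon$, one cannot send $\epsilon\to 0$ inside the argument. Instead, a single positive slack $\sigma$ must simultaneously select the function $\Phi$ and supply the strict gap $r-\varphi(t)-\sigma>0$ that produces a uniformly positive lower bound on $\Phi(y_n,y_m)$. Some additional care is needed with open versus closed balls, and strict versus non-strict inequalities, when relating $\alpha(S(t)B)>3\varphi(t)+3\sigma$ to the existence of a genuinely separated sequence through the greedy construction; once this bookkeeping is handled, the contradiction with the repeated-liminf definition of $\mathrm{Contr}(B)$ is immediate.
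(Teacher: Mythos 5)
Your proof is correct and follows essentially the same route as the paper's: assume $\alpha(S(t)B)>3\varphi(t)$, greedily extract a sequence $\{y_n\}\subseteq B$ whose images under $S(t)$ are uniformly separated, and derive a uniform positive lower bound on the contractive function that contradicts the repeated-liminf condition defining $\mathrm{Contr}(B)$. The only cosmetic differences are your detour through the Hausdorff measure via $\chi\geq\alpha/2$ (the paper works directly with diameters of covering sets) and your choice of slack $\epsilon=\sigma$ extracted from the strict inequality rather than the paper's $\epsilon=\tfrac{1}{4}\varphi(t)$; your choice in fact also handles the degenerate case $\varphi(t)=0$, where the paper's final strict inequality would be vacuous.
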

\begin{proof}
If~$(\ref{20-7-24-37})$ was false, then there would exist~$t_0 \geq T$ such that
\begin{equation*}
\begin{split}
\alpha(S(t_0)B)> 3\varphi(t_0).
 \end{split}\end{equation*}
 Hence according to the definition of noncompactness measure,  $S(t_0)B$  has no finite cover of diameter less than or equal to $3\varphi(t_0)$. Consequently, for any $y_1\in B$, there exists~$y_2\in B$~such that
\begin{equation*}
  d(S(t_0)y_1,S(t_0)y_2)>\frac{3}{2}\varphi(t_0).
\end{equation*}
And then we can find ~$y_3\in B$~ satisfying
\begin{equation*}
  d(S(t_0)y_3,S(t_0)y_i)>\frac{3}{2}\varphi(t_0),\ i=1,2.
\end{equation*}
 Iterating in this way, we obtain a sequence $\{y_n\}\subseteq B$ satisfying
\begin{equation}\label{20-7-24-40}
  d(S(t_0)y_n,S(t_0)y_m)>\frac{3}{2}\varphi(t_0),\  \forall m\neq n.
\end{equation}
By~$(\ref{20-7-24-36})$, for~$t_0$~and~$\epsilon=\frac{1}{4}\varphi(t_0)$, there exists $\Phi_{t_0}\in Contr(B)$~such that
\begin{equation}\label{20-7-24-41}
d(S(t_0)y_n,S(t_0)y_m)\leq \varphi(t_0)+\frac{1}{4}\varphi(t_0)+\Phi_{t_0}(y_n,y_m).
\end{equation}
Combining~$(\ref{20-7-24-40})$~and~$(\ref{20-7-24-41})$~yields
\begin{equation}\label{20-7-24-42}
\Phi_{t_0}(y_n,y_m)>\frac{1}{4}\varphi(t_0)>0,
\end{equation}
which implies
\begin{equation}\label{20-7-24-43}
\liminf_{m\rightarrow\infty} \liminf_{n\rightarrow\infty}\Phi_{t_0}(y_n,y_m)\geq\frac{1}{4}\varphi(t_0)>0.
\end{equation}
This is contrary to~$\Phi_{t,\epsilon}\in \mathrm{Contr}(B)$. We have thus proved the theorem.
\end{proof}

The following theorem states that quasi-stable systems are exponentially decaying with respect to noncompact measure.
\begin{theorem}\label{20-7-27-80}
Let~$\{S(t)\}_{t\geq0}$~be a dissipative semigroup on a complete metric space $(X,d)$ and $\mathcal{B}_0$ be its positively invariant bounded absorbing set. Assume that there exist positive constants $T, \delta_0$, a constant~$\eta\in[0,1)$, a function $g:(\mathds{R}^{+})^{m}\rightarrow\mathds{R}^{+}$~and pseudometrics ~$\varrho_{i}\ (i=1,2,\ldots,m)$~on~$\mathcal{B}_0$~such that
\begin{itemize}
\item[(i)]~$g$~is non-decreasing with respect to each
variable,~$g(0,\ldots,0)=0$~and~$g$~is continuous at~$(0,\ldots,0)$;
\item[(ii)] for each $i = 1, 2, . . . , m$,~$\varrho_{i}$~is precompact on~$\mathcal{B}_0$, i.e., any sequence~$\{x_n\}\subseteq \mathcal{B}_0$~has a subsequence~$\{x_{n_k}\}$~which is Cauchy with
respect to~$\varrho_{i}$;
\item[(iii)]~the inequality
\begin{equation}\label{20-8-10-9}
\begin{split}
  &d(S(T)y_1,S(T)y_2)\\\leq &\eta d(y_1,y_2)+g\Big(\varrho_1\big(y_1,y_2\big),\varrho_2\big(y_1,y_2\big),\ldots,\varrho_m\big(y_1,y_2\big)\Big)
\end{split}\end{equation}
 holds for all~$y_1,y_2\in \mathcal{B}_0$~satisfying
~$\varrho_{i}(y_1,y_2)\leq \delta_0\ (i=1,2,\ldots,m)$.
\end{itemize}
Then~$(X, \{S(t)\}_{t\geq0})$~is exponentially decaying with respect to noncompactness measure. And for each bounded~$B\subseteq X$~we have
\begin{equation*}
\begin{split}
  \alpha(S(t)B)\leq \eta^{\frac{t-t_{*}(B)-T}{T}}\alpha\big(\mathcal{B}_0\big),\ \forall t\geq t_{*}(B)+T,
  \end{split}
\end{equation*}
where~$t_{*}(B)$~satisfies
\begin{equation*}
  S(t)B\subseteq \mathcal{B}_0, \ \forall t\geq t_{*}(B).
\end{equation*}
\end{theorem}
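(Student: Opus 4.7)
The plan is to prove the conclusion in four steps: a one-step contraction on $\mathcal{B}_0$, iteration to the discrete times $kT$, interpolation to arbitrary $t\ge T$, and transfer from $\mathcal{B}_0$ to a general bounded $B$.

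For the one-step bound $\alpha(S(T)\mathcal{B}_0)\le \eta\alpha(\mathcal{B}_0)$, I would fix $\epsilon>0$, set $\alpha_0=\alpha(\mathcal{B}_0)$, and begin with a finite $d$-cover $\{F_j\}$ of $\mathcal{B}_0$ whose pieces have $d$-diameter below $\alpha_0+\epsilon$. Continuity of $g$ at the origin with $g(0,\ldots,0)=0$ lets me pick $\delta\in(0,\delta_0]$ with $g(\delta,\ldots,\delta)<\epsilon$. The crux is then to refine $\{F_j\}$ into a finite cover $\{U_k\}$ of $\mathcal{B}_0$ whose pieces simultaneously have $d$-diameter below $\alpha_0+\epsilon$ and $\varrho_i$-diameter below $\delta$ for every $i$. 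Hypothesis (ii) — that each $\varrho_i$ is precompact on $\mathcal{B}_0$ — is equivalent to total $\varrho_i$-boundedness, so for each $i$ there is a finite $\varrho_i$-cover of $\mathcal{B}_0$ of $\varrho_i$-diameter below $\delta$; intersecting the $m+1$ covers (the original $d$-cover together with one $\varrho_i$-cover per $i$) produces $\{U_k\}$. On each $U_k$ the constraint $\varrho_i(y_1,y_2)<\delta\le\delta_0$ activates (iii), yielding $\mathrm{diam}_d\, S(T)U_k\le\eta(\alpha_0+\epsilon)+g(\delta,\ldots,\delta)\le\eta(\alpha_0+\epsilon)+\epsilon$; taking the induced finite cover of $S(T)\mathcal{B}_0$ and letting $\epsilon\downarrow 0$ gives $\alpha(S(T)\mathcal{B}_0)\le\eta\alpha_0$.

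Iteration is then straightforward: positive invariance of $\mathcal{B}_0$ gives $S(kT)\mathcal{B}_0\subseteq\mathcal{B}_0$, so the single-step argument may be reapplied to produce $\alpha(S(kT)\mathcal{B}_0)\le\eta^k\alpha(\mathcal{B}_0)$ by induction on $k$. For $t\in[kT,(k+1)T)$, positive invariance yields $S(t)\mathcal{B}_0=S(kT)S(t-kT)\mathcal{B}_0\subseteq S(kT)\mathcal{B}_0$, hence $\alpha(S(t)\mathcal{B}_0)\le\eta^k\alpha(\mathcal{B}_0)\le\eta^{(t-T)/T}\alpha(\mathcal{B}_0)$ since $\eta\in[0,1)$ and $k>(t-T)/T$. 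For a general bounded $B$, the absorption time $t_*(B)$ together with the semigroup property gives $S(t)B\subseteq S(t-t_*(B))\mathcal{B}_0$ for $t\ge t_*(B)$, and the stated estimate follows by applying the preceding bound to $\mathcal{B}_0$ at time $t-t_*(B)\ge T$.

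The main obstacle is the common refinement step just described: one needs a single finite cover of $\mathcal{B}_0$ controlling the base metric $d$ together with all $m$ pseudometrics at once, which is forced by the restriction $\varrho_i(y_1,y_2)\le\delta_0$ in (iii). The precompactness hypothesis on each $\varrho_i$ is exactly what makes this possible, and the monotonicity of $g$ is what allows the uniform bound $g(\varrho_1,\ldots,\varrho_m)\le g(\delta,\ldots,\delta)$ on each refined piece. Everything else — applying (iii), sending $\epsilon$ to zero via the continuity of $g$, iterating in $k$, interpolating in $s\in[0,T)$, and extending from $\mathcal{B}_0$ to $B$ — is routine.
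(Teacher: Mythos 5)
Your proposal is correct and follows essentially the same route as the paper's proof: a common finite refinement of a $d$-cover of diameter $\alpha(\mathcal{B}_0)+\epsilon$ with finite $\varrho_i$-nets (the paper uses balls of $\varrho_i$-radius $\tfrac{1}{2}\min\{\delta,\delta_0\}$, which is your total-boundedness step), application of the quasi-stability inequality on each piece, the limit $\epsilon\downarrow 0$, iteration to $\eta^{n}\alpha(\mathcal{B}_0)$ at times $nT$, interpolation via positive invariance, and transfer to general bounded sets through the absorbing property. No gaps.
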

\begin{proof}
Let~$D\subseteq \mathcal{B}_0$.

For any~$\epsilon>0$, there exist sets~$F_1,F_2,\ldots,F_n$ such that
\begin{equation}\label{20-8-10-1}
 D\subseteq\cup_{j=1}^{n}F_n,\ \mathrm{diam} F_j<\alpha(D)+\epsilon.
\end{equation}
It follows from assumption (i) that there exists
$\delta>0$~such that~$g(x_1,x_2,\ldots,x_m)<\epsilon$ whenever~$x_i\in [0,\delta]\ (i=1,2,\ldots,m)$.
By the precompactness of~$\varrho_i\ (i=1,2,\ldots,m)$, there exists a finite set ~$\mathcal{N}^i=\{x^i_{j}:j=1,2,\ldots,k_i\}\subseteq D$~such that for every~$y\in D$~there is~$x^i_{j}\in \mathcal{N}^i$~with the property ~$\textcolor[rgb]{1.00,0.00,0.00}{\varrho_i}(y,x^i_{j})\leq\frac{1}{2}\min\{\delta,\delta_0\}$, i.e.,
\begin{equation}\label{20-8-10-2}
 D\subseteq\cup_{j=1}^{k_i}C_{j}^{i},\ C_{j}^{i}=\big\{y:\varrho_i(y,x^i_{j})\leq\frac{1}{2}\min\{\delta,\delta_0\}\big\},\ i=1,\ldots,m.
\end{equation}
Consequently, we have
\begin{equation*}
  D\subseteq\cup_{j_1=1}^{k_1}\cup_{j_2=1}^{k_2}\cdots\cup_{j_m=1}^{k_m}\cup_{j=1}^{n}( C_{j_1}^{1}\cap C_{j_2}^{2}\cap\ldots\cap C_{j_m}^{m}\cap F_j)
\end{equation*}
and
\begin{equation*}
  S(T)D\subseteq\cup_{j_1=1}^{k_1}\cup_{j_2=1}^{k_2}\cdots\cup_{j_m=1}^{k_m}\cup_{j=1}^{n}\big(S(T)( C_{j_1}^{1}\cap C_{j_2}^{2}\cap\ldots\cap C_{j_m}^{m}\cap F_j)\big).
\end{equation*}
By~$(\ref{20-8-10-1})$~and~$(\ref{20-8-10-2})$, for any~$y_1,y_2\in C_{j_1}^{1}\cap C_{j_2}^{2}\cap\ldots\cap C_{j_m}^{m}\cap F_j$, we have
\begin{equation}\label{20-8-10-3}
  d\big(y_1,y_2\big)\leq \mathrm{diam} F_j<\alpha(D)+\epsilon
\end{equation}
and
\begin{equation}\label{20-8-10-4}
  \varrho_i\big(y_1,y_2\big)\leq \min\{\delta,\delta_0\}, i=1,2,\ldots,m.
\end{equation}
Inequality~$(\ref{20-8-10-4})$~implies
\begin{equation}\label{20-8-10-5}
  g\Big(\varrho_1\big(y_1,y_2\big),\ldots,\varrho_m\big(y_1,y_2\big)\Big)<\epsilon.
\end{equation}
We deduce from~$(\ref{20-8-10-9})$,~$(\ref{20-8-10-3})$-$(\ref{20-8-10-5})$~that
\begin{equation*}
\begin{split}
  d(S(T)y_1,S(T)y_2)\leq \eta \big(\alpha(D)+\epsilon\big)+\epsilon,
\end{split}\end{equation*}
i.e.,
\begin{equation*}
  \mathrm{diam}\big(S(T)( C_{j_1}^{1}\cap C_{j_2}^{2}\cap\ldots\cap C_{j_m}^{m}\cap F_j)\big)\leq \eta \big(\alpha(D)+\epsilon\big)+\epsilon.
\end{equation*}
Hence according to the definition of Kuratowski's~noncompactness measure,
\begin{equation}\label{20-8-10-11}
  \alpha(S(T)D)\leq \eta  \alpha(D).
\end{equation}
From~$(\ref{20-8-10-11})$~we obtain
\begin{equation*}
  \alpha\big(S(nT)\mathcal{B}_0\big)=\alpha\big(S(T)S((n-1)T)\mathcal{B}_0\big)\leq\eta\alpha\big(S((n-1)T)\mathcal{B}_0\big), \forall n\in \mathds{N},
\end{equation*}
which by iteration yields
\begin{equation*}
  \alpha\big(S(nT)\mathcal{B}_0\big)\leq\eta^n\alpha\big(\mathcal{B}_0\big).
\end{equation*}
Therefore, for all~$t\geq T$~we have
\begin{equation}\label{20-8-10-15}
\begin{split}
  \alpha\big(S(t)\mathcal{B}_0\big)\leq &\alpha\big(S([\frac{t}{T}]T)\mathcal{B}_0\big)\\ \leq&\eta^{[\frac{t}{T}]}\alpha\big(\mathcal{B}_0\big)\\ \leq&\eta^{\frac{t}{T}-1}\alpha\big(\mathcal{B}_0\big).
\end{split}\end{equation}
Thus by Lemma~$\ref{lemma 8-20-5-12}$, for every bounded set~$B\subseteq X$, we have an estimate
\begin{equation*}
\begin{split}
  \alpha(S(t)B)\leq\eta^{\frac{t-t_{*}(B)-T}{T}}\alpha\big(\mathcal{B}_0\big),\ \forall t\geq t_{*}(B)+T,
  \end{split}
\end{equation*}
where~$t_{*}(B)$~satisfies
\begin{equation*}
  S(t)B\subseteq \mathcal{B}_0, \ \forall t\geq t_{*}(B).
\end{equation*}
\end{proof}

\begin{remark}
When~$g=0$,~$(\ref{20-8-10-9})$ implies that~$\{S(t)\}_{t\geq0}$ is exponentially stable in~$B\subseteq X$.~$(\ref{20-8-10-9})$ is called quasi-stability inequality since it pertains to the decomposition of the flow into exponentially stable and
compact parts.
\end{remark}

In the following theorem,  a contractive function term is added to the quasi-stable condition, which gives a criterion for the existence of compact exponential attracting sets in the critical case.

\begin{theorem}\label{22-1-28-1}
Let~$\{S(t)\}_{t\geq0}$~be a dissipative semigroup on a complete metric space $(X,d)$ and $\mathcal{B}_0$ be its positively invariant bounded absorbing set. Assume that there exist a positive constant $T$, a constant~$\eta\in[0,1)$, a function $g:(\mathds{R}^{+})^{m}\rightarrow\mathds{R}^{+}$, a function ~$\phi:X\times X\rightarrow \mathds{R^{+}}$ and pseudometrics ~$\varrho_{i}\ (i=1,2,\ldots,m)$~on~$\mathcal{B}_0$~such that
\begin{itemize}
\item[(i)]~$g$~is non-decreasing with respect to each
variable,~$g(0,\ldots,0)=0$~and~$g$~is continuous at~$(0,\ldots,0)$;
\item[(ii)]for any $\{y_n\}\subseteq \mathcal{B}_0$ there exists a subsequence $\{y_{n_k}\}$ of $\{y_n\}$ such that
\begin{equation*}
 \lim_{k\rightarrow\infty} \lim_{l\rightarrow\infty} \phi(y_{n_k},y_{n_l})=0;
\end{equation*}
\item[(iii)] for each $i = 1, 2, . . . , m$,~$\varrho_{i}$~is precompact on~$\mathcal{B}_0$, i.e., any sequence~$\{x_n\}\subseteq \mathcal{B}_0$~has a subsequence~$\{x_{n_k}\}$~which is Cauchy with
respect to~$\varrho_{i}$;
\item[(iv)]~the inequality
\begin{equation}\label{22-1-28-3}
\begin{split}
  d(S(T)y_1,S(T)y_2)\leq \eta d(y_1,y_2)+g\Big(\varrho_1\big(y_1,y_2\big),\varrho_2\big(y_1,y_2\big),\ldots,\varrho_m\big(y_1,y_2\big)\Big)
+\phi\big(y_1,y_2\big)\end{split}\end{equation}
 holds for all~$y_1,y_2\in \mathcal{B}_0$.
\end{itemize}
Then~$(X, \{S(t)\}_{t\geq0})$~is exponentially decaying with respect to noncompactness measure. And for each bounded~$B\subseteq X$~we have
\begin{equation*}
\begin{split}
  \alpha(S(t)B)\leq 2\eta^{\frac{t-t_{*}(B)-T}{T}}\alpha\big(\mathcal{B}_0\big),\ \forall t\geq t_{*}(B)+T,
  \end{split}
\end{equation*}
where~$t_{*}(B)$~satisfies
\begin{equation*}
  S(t)B\subseteq \mathcal{B}_0, \ \forall t\geq t_{*}(B).
\end{equation*}
\end{theorem}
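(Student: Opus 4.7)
The plan is to adapt the proof of Theorem~\ref{20-7-27-80} (the pure quasi-stability case, which handles (iv) without the term $\phi$) by grafting onto it the contradiction/subsequence machinery of Theorem~\ref{20-7-24-35} (the contractive function criterion). Hypothesis (ii) and the extra summand $\phi(y_1,y_2)$ in (iv) are tailored precisely to this hybrid: (ii) plays for $\phi$ the role that precompactness plays for the pseudometrics $\varrho_i$, but because it only yields a single contractive subsequence rather than a uniform estimate, it must be incorporated through a contradiction argument instead of through a covering argument.

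The central task is the one-step contraction of the Kuratowski measure,
\begin{equation*}
\alpha(S(T)D) \leq 2\eta\,\alpha(D), \qquad \forall\, D\subseteq \mathcal{B}_0.
\end{equation*}
Granted this, the positive invariance $S(nT)\mathcal{B}_0\subseteq\mathcal{B}_0$ allows iteration, the monotonicity of $\alpha(S(t)\mathcal{B}_0)$ in $t$ (via Lemma~\ref{lemma21-2-10-1}) interpolates between integer multiples of $T$, and Lemma~\ref{lemma 8-20-5-12} finally transfers the estimate from $\mathcal{B}_0$ to an arbitrary bounded $B\subseteq X$ through the entering time $t_*(B)$, reproducing the stated rate $2\eta^{(t-t_*(B)-T)/T}\alpha(\mathcal{B}_0)$ in direct analogy with the concluding computation in Theorem~\ref{20-7-27-80}.

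To prove the one-step estimate I would argue by contradiction. Assume $\alpha(S(T)D) > 2\eta\alpha(D) + 4\delta$ for some $\delta > 0$; then $S(T)D$ admits no finite cover of $d$-diameter at most $2\eta\alpha(D) + 4\delta$, so the inductive selection used in the proof of Theorem~\ref{20-7-24-35} delivers a sequence $\{S(T)y_n\}\subseteq S(T)D$ with $d(S(T)y_n, S(T)y_m) > \eta\alpha(D) + 2\delta$ whenever $n\neq m$. I then refine $\{y_n\}\subseteq D$ by three successive subsequence passes. First, cover $D$ by finitely many sets of $d$-diameter less than $\alpha(D)+\epsilon_1$ and pigeonhole $\{y_n\}$ into a single such set, so $d(y_n,y_m) < \alpha(D)+\epsilon_1$. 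Second, invoke (iii) (precompactness of each $\varrho_i$) together with (i) (continuity of $g$ at the origin) to extract a further subsequence along which $g(\varrho_1(y_{n_k},y_{n_l}),\ldots,\varrho_m(y_{n_k},y_{n_l}))\to 0$ as $k,l\to\infty$. Third, apply (ii) to extract a final subsequence satisfying $\lim_{k\to\infty}\lim_{l\to\infty}\phi(y_{n_k},y_{n_l})=0$. Feeding this triply refined subsequence into (\ref{22-1-28-3}) and taking the iterated $\limsup$ in $l$ followed by $k$ bounds the right-hand side by $\eta(\alpha(D)+\epsilon_1)$, whereas the left-hand side remains bounded below by $\eta\alpha(D)+2\delta$. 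Choosing $\epsilon_1 < 2\delta/\eta$ produces the required contradiction.

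The main obstacle I anticipate is orchestrating these three extractions in the correct order. Because hypothesis (ii) only provides a single contractive subsequence (not a uniform modulus of smallness for $\phi$), it has to be consumed last, after the diameter-pigeonhole and the $\varrho_i$-Cauchy reductions have already pruned $\{y_n\}$; at each pass I must verify that the earlier controls on $d$ and on the $\varrho_i$'s are preserved under further subsequence extraction, so that all three bounds may be used simultaneously when substituting into (\ref{22-1-28-3}). A secondary bookkeeping concern is tracking the factor $2$ through the iteration so that the final rate matches the form $2\eta^{(t-t_*(B)-T)/T}$ stated in the conclusion, which is handled by applying the one-step contraction once to absorb the $2$ and then using monotonicity of $\alpha(S(t)\mathcal{B}_0)$ and positive invariance of $\mathcal{B}_0$ to carry out the remaining iterations cleanly.
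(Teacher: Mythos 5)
Your plan contains a genuine gap at the iteration step, and it is precisely the point you dismiss as a ``secondary bookkeeping concern.'' Because hypothesis (ii) only supplies a subsequence along which the iterated limit of $\phi$ vanishes (not a uniform modulus of smallness on small sets), the covering argument of Theorem~\ref{20-7-27-80} is unavailable and you are forced into the contradiction/separated-sequence argument; but that argument inherently yields the one-step bound $\alpha(S(T)D)\leq 2\eta\,\alpha(D)$ \emph{with} the factor $2$, and this factor reappears at every application. Iterating therefore gives $\alpha(S(nT)\mathcal{B}_0)\leq (2\eta)^n\alpha(\mathcal{B}_0)$, which is vacuous whenever $\eta\geq 1/2$ and in any case does not produce the claimed rate $2\eta^{n}\alpha(\mathcal{B}_0)$. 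Your proposed fix --- ``apply the one-step contraction once to absorb the $2$ and then use monotonicity of $\alpha(S(t)\mathcal{B}_0)$ and positive invariance to carry out the remaining iterations'' --- does not work: monotonicity of $t\mapsto\alpha(S(t)\mathcal{B}_0)$ gives only non-increase, not decay, and every further contraction step must again go through the contradiction argument and hence again pays the factor $2$.

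The paper's proof avoids this by reversing the order of operations: it first iterates the pointwise inequality (iv) $n$ times at the level of the metric, obtaining
\begin{equation*}
d(S(nT)y_1,S(nT)y_2)\leq \eta^{n}d(y_1,y_2)+\sum_{k=0}^{n-1}\eta^{n-k-1}\Bigl[g\bigl(\varrho_1(S(kT)y_1,S(kT)y_2),\ldots\bigr)+\phi\bigl(S(kT)y_1,S(kT)y_2\bigr)\Bigr],
\end{equation*}
then performs your three subsequence extractions (pigeonhole into a diameter-$(\alpha(\mathcal{B}_0)+\epsilon)$ cell, the $\varrho_i$-Cauchy refinement, and the $\phi$-refinement of (ii)) simultaneously for all intermediate times $kT$, $k=0,\ldots,n-1$, to conclude $\liminf_{p}\liminf_{q}d(S(nT)y_p,S(nT)y_q)\leq\eta^{n}\alpha(\mathcal{B}_0)$ for every sequence in $\mathcal{B}_0$; only then is the contradiction argument invoked, a single time, to convert this into $\alpha(S(nT)\mathcal{B}_0)\leq 2\eta^{n}\alpha(\mathcal{B}_0)$. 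The factor $2$ thus enters exactly once, after the geometric decay $\eta^{n}$ has already been secured. Your extraction machinery for $g$ and $\phi$ is essentially the right one and survives intact; what must change is that it be applied to the $n$-fold iterated inequality rather than to a one-step estimate of $\alpha$.
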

\begin{proof}
Iterating~$(\ref{22-1-28-3})$  we deduce that for any~$y_1,y_2\in \mathcal{B}_0$ and each $n\in\mathds{N}$
\begin{equation}\label{22-1-28-17}
\begin{split}
  d(S(nT)y_1,S(nT)y_2)\leq &\eta d(S((n-1)T)y_1,S((n-1)T)y_2)\\&+g\Big(\varrho_1\big(S((n-1)T)y_1,S((n-1)T)y_2\big),\ldots,\varrho_m\big(S((n-1)T)y_1,S((n-1)T)y_2\big)\Big)\\&
+\phi\big(S((n-1)T)y_1,S((n-1)T)y_2\big)\\
\leq &\eta^n d(y_1,y_2)+\sum\limits_{k=0}^{n-1}\eta^{n-k-1}\bigg[g\Big(\varrho_1\big(S(kT)y_1,S(kT)y_2\big),\ldots,\varrho_m\big(S(kT)y_1,S(kT)y_2\big)\Big)
\\&+\phi\big(S(kT)y_1,S(kT)y_2\big)\bigg].
\end{split}
\end{equation}
By the definition of the noncompactness measure~$\alpha$, for any $\epsilon>0$, there exist sets~$F_1,F_2,\ldots,F_\gamma$~such that
\begin{equation}\label{22-1-28-11}
  \mathcal{B}_0\subseteq\cup_{j=1}^{\gamma}F_j,\ \mathrm{diam} F_j<\alpha(\mathcal{B}_0)+\epsilon.
\end{equation}
For every sequence $\{y_p\}\subseteq \mathcal{B}_0$, there exist~$j_{*}\in\{1,2,\ldots,\gamma\}$ and a subsequence~$\{y_{p_\iota}\}$ of $\{y_p\}$ such that
$\{y_{p_\iota}\}_{\iota=1}^{+\infty}\subseteq F_{j_*}$. By~$(\ref{22-1-28-11})$, we have
\begin{equation}\label{22-1-28-12}
  d\big(y_{p_\iota},y_{p_\nu}\big)\leq \mathrm{diam} F_{j_*}<\alpha(\mathcal{B}_0)+\epsilon, ~~\forall \iota,\nu\in \mathds{N}.
\end{equation}
According to  assumptions (i),~(ii) and (iii) of the theorem,~$\{y_{p_\iota}\}$ has a subsequence ~$\{y_{p_{\iota_{\lambda}}}\}$  such that
\begin{equation}\label{22-1-28-18}
 \lim_{\lambda,\chi\rightarrow+\infty} g\Big(\varrho_1\big(S(kT)y_{p_{\iota_{\lambda}}},S(kT)y_{p_{\iota_{\chi}}}\big),\ldots,\varrho_m\big(S(kT)y_{p_{\iota_{\lambda}}},S(kT)y_{p_{\iota_{\chi}}}\big)\Big)=0
\end{equation}
and
\begin{equation}\label{22-1-28-19}
 \lim_{\lambda\rightarrow+\infty} \lim_{\chi\rightarrow+\infty}  \phi\big(S(kT)y_{p_{\iota_{\lambda}}},S(kT)y_{p_{\iota_{\chi}}}\big)=0
\end{equation}
hold for~$k=0,\ldots,n-1$.

We deduce from~$(\ref{22-1-28-17})$,~$(\ref{22-1-28-12})$-$(\ref{22-1-28-19})$ that
\begin{equation}\label{22-1-28-30}
    \liminf_{p\rightarrow +\infty} \liminf_{q\rightarrow +\infty}  d(S(nT)y_{p},S(nT)y_{q})\leq\liminf_{\lambda\rightarrow+\infty} \liminf_{\chi\rightarrow+\infty}  d(S(nT)y_{p_{\iota_{\lambda}}},S(nT)y_{p_{\iota_{\chi}}})\leq  \eta^n\big(\alpha(\mathcal{B}_0)+\epsilon\big),
\end{equation}
which implies that
\begin{equation}\label{22-1-28-31}
    \liminf_{p\rightarrow +\infty} \liminf_{q\rightarrow +\infty}  d(S(nT)y_{p},S(nT)y_{q})\leq \eta^n\alpha(\mathcal{B}_0),~~\forall\{y_p\}\subseteq \mathcal{B}_0,~\forall n\in\mathds{N}.
\end{equation}
Inequality~$(\ref{22-1-28-31})$ implies that
\begin{equation}\label{22-1-28-35}
\begin{split}
\alpha\big(S(nT)\mathcal{B}_0\big)\leq 2\eta^n\alpha(\mathcal{B}_0).
 \end{split}\end{equation}
 Indeed, if~$(\ref{22-1-28-35})$ was false,  then there would exist~$d_0>0$ such that
\begin{equation}\label{22-1-28-40}
\begin{split}\alpha\big(S(nT)\mathcal{B}_0\big)>d_0>  2\eta^n\alpha(\mathcal{B}_0).
 \end{split}\end{equation}
Hence according to the definition of the noncompactness measure~$\alpha$,~$S(nT)\mathcal{B}_0$  has no finite cover of diameter less than or equal to $d_0$. Consequently, for any $y_1\in \mathcal{B}_0$, there exists~$y_2\in \mathcal{B}_0$ such that
\begin{equation*}
  d(S(nT)y_1,S(nT)y_2)>\frac{d_0}{2}.
\end{equation*}
And then we can find ~$y_3\in \mathcal{B}_0$~ satisfying
\begin{equation*}
  d(S(nT)y_3,S(nT)y_i)>\frac{d_0}{2},\ i=1,2.
\end{equation*}
 Iterating in this way, we obtain a sequence $\{y_p\}\subseteq \mathcal{B}_0$ satisfying
\begin{equation}\label{21-9-1-16}
  d(S(nT)y_p,S(nT)y_q)>\frac{d_0}{2}> \eta^n\alpha(\mathcal{B}_0),\  \forall p\neq q.
\end{equation}
This is contrary to~$(\ref{22-1-28-31})$.

Therefore, for all~$t\geq T$~we have
\begin{equation}\label{20-8-10-15}
\begin{split}
  \alpha\big(S(t)\mathcal{B}_0\big)\leq 2\eta^{\frac{t}{T}-1}\alpha\big(\mathcal{B}_0\big).
\end{split}\end{equation}
Thus by Lemma~$\ref{lemma 8-20-5-12}$, for every bounded set~$B\subseteq X$, we have an estimate
\begin{equation*}
\begin{split}
  \alpha(S(t)B) \leq 2\eta^{\frac{t-t_{*}(B)-T}{T}}\alpha\big(\mathcal{B}_0\big),\ \forall t\geq t_{*}(B)+T,
  \end{split}
\end{equation*}
where~$t_{*}(B)$~satisfies
\begin{equation*}
  S(t)B\subseteq \mathcal{B}_0, \ \forall t\geq t_{*}(B).
\end{equation*}
\end{proof}

\section{Application to a class of wave equations}\label{20-8-10-67}
In the last section, we will establish the existence of compact exponential attracting sets for a class of wave equations with critical nonlinearity by employing  Theorem~$\ref{20-8-5-3}$, Theorem~$\ref{20-7-24-35}$ and Theorem~$\ref{22-1-28-1}$.

Let~$\Omega$ be a bounded domain in $\mathbb{R}^N(N\geq3)$ with a sufficiently smooth boundary~$\partial\Omega$. Below we will denote the inner product and the norm on~$L^2(\Omega)$~by~$(\cdot,\cdot)$ and $\|\cdot\|$ respectively, and the norm on~$L^p(\Omega)$~by~$\|\cdot\|_p$. Consider the wave equation:
\begin{align}
\label{wave equa3}&u_{tt}-\Delta u+k||u_{t}||^p u_t+lu_t+f(u)
=\displaystyle\int_{\Omega}K(x,y)u_{t}(y)dy+h(x), \ x\in\Omega,t\geq0,\\
\label{boundary condition3}&u|_{\partial \Omega}=0, \ t\geq0,\\
\label{initial condition3}&u(x,0)=u_0(x) , u_{t}(x,0)=u_{1}(x),\  x\in\Omega.
\end{align}
We assume the following.
\begin{assumption}\label{22-2-5-1}
\begin{itemize}
  \item [(i)]~$k$,~$l$~and $p$ are positive constants, $K\in L^2(\Omega\times\Omega)$, $h\in L^2(\Omega)$;
  \item [(ii)]~$f\in C^1(\mathbb{R})$ satisfies the critical growth condition
\begin{equation}\label{growth}
    |f'(s)| \leq M(|s|^{\frac{2}{N-2}}+1)
  \end{equation}
and the dissipativity condition
\begin{equation}\label{dissipativity condition}
    \liminf_{|s|\rightarrow\infty}f'(s)\equiv \mu > -\lambda_{1},
  \end{equation}
where $M\geq0$ and $\lambda_{1}$ is the first eigenvalue of the operator $-\Delta$ equipped with Dirichlet boundary condition.
\end{itemize}
\end{assumption}
\begin{definition}
A function $u \in C([0,T];H^1_0(\Omega)) \cap C^1([0,T];L^2(\Omega))$ with $u(0)=u_0$ and $u_t(0)=u_1$ is said to be a weak solution  to problem $(\ref{wave equa3})$-$(\ref{initial condition3})$ on the interval $[0,T]$, iff $u$ satisfies Eq. $(\ref{wave equa3})$ in the sense of distributions.
\end{definition}
 We have proved in \cite{my1} the global well-posedness and dissipativity  for  the problem~$(\ref{wave equa3})$-$(\ref{initial condition3})$ with~$l = 0 $.  Adding  the linear damping term~$lu_t~(l>0)$  makes no essential difference to  the proof of the well-posedness and dissipativity.  Similarly as in \cite{my1}, we have the lemma  as follows.

\begin{lemma}\cite{my1}\label{21-8-29-7}
Let~$T> 0$~be arbitrary. Under Assumption~$\ref{22-2-5-1}$, for every~$(u_0,u_1) \in H^1_0(\Omega)\times L^2(\Omega)$, the initial
boundary value problem~$(\ref{wave equa3})$-$(\ref{initial condition3})$ has a unique weak solution~$u \in C([0,T];H^1_0(\Omega)) \cap C^1([0,T];L^2(\Omega))$, which generates the semigroup
\begin{equation*}
  S(t)(u_0,u_1) = (u(t),u_t (t)),\ t\geq0
\end{equation*}
on~$H^1_0(\Omega)\times L^2(\Omega) $.

Furthermore, the semigroup~$\{S(t)\}_{t\geq0}$~is dissipative, which implies the existence of a positively invariant bounded absorbing set~$\mathcal{B}_0$.
\end{lemma}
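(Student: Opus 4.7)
The plan is to follow the Galerkin-plus-energy-method argument from \cite{my1} and to show that the additional linear damping $lu_t$ only simplifies each estimate. The natural energy is
\[
E(t) = \tfrac12\|u_t\|^2 + \tfrac12\|\nabla u\|^2 + \int_\Omega F(u)\,dx - (h,u),
\]
where $F(s)=\int_0^s f(\tau)\,d\tau$. Assumption~$\ref{22-2-5-1}$(ii) together with Poincar\'e's inequality yields $E(t)+C\geq c(\|u_t\|^2+\|\nabla u\|^2)$, so $E$ controls the $H_0^1\times L^2$-norm up to an additive constant.

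For existence I would run a Faedo-Galerkin approximation $u^n$ in the span of the first $n$ eigenfunctions of $-\Delta$ with Dirichlet boundary data, and test the projected equation against $u_t^n$. This produces
\[
\tfrac{d}{dt}E(u^n,u_t^n) + k\|u_t^n\|^{p+2} + l\|u_t^n\|^2 = \Bigl(\int_\Omega K(x,y)u_t^n(y)\,dy,\ u_t^n\Bigr).
\]
The right-hand side is bounded by $\|K\|_{L^2(\Omega\times\Omega)}\|u_t^n\|^2$, which is absorbed by $l\|u_t^n\|^2$ via Young's inequality (for small $K$, directly; otherwise Gronwall handles the residual), yielding uniform bounds on $u^n$ in $L^\infty(0,T;H_0^1)\cap W^{1,\infty}(0,T;L^2)$. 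Passing to the limit in the linear and nonlocal terms is standard; for the critical term $f(u^n)$ I would combine Aubin-Lions to obtain strong convergence of $u^n$ in $L^q(0,T;L^q(\Omega))$ for all subcritical $q$, the growth bound~$(\ref{growth})$, and a Vitali-type argument to identify the limit of $f(u^n)$. For uniqueness, if $u,\bar u$ solve the problem with the same data and $w=u-\bar u$, testing the difference equation with $w_t$ and using~$(\ref{growth})$ with the embedding $H_0^1\hookrightarrow L^{2N/(N-2)}$ gives
\[
|(f(u)-f(\bar u),w_t)| \leq C\bigl(1+\|\nabla u\|^{2/(N-2)}+\|\nabla \bar u\|^{2/(N-2)}\bigr)\|\nabla w\|\,\|w_t\|;
\]
the nonlocal term is controlled similarly, the damping terms $l\|w_t\|^2$ and $k(\|u_t\|^p u_t-\|\bar u_t\|^p \bar u_t,w_t)$ are nonnegative on the left and may be dropped, and Gronwall closes the estimate.

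Dissipativity is handled by the Lyapunov functional $\Psi(t) = E(t) + \delta(u,u_t) + \tfrac{\delta l}{2}\|u\|^2$ with a small $\delta>0$. Differentiating along the flow produces $-l\|u_t\|^2 - k\|u_t\|^{p+2}$ from the damping together with a perturbation contribution of order $\delta(\|u_t\|^2-\|\nabla u\|^2-(f(u),u)+\text{nonlocal}+(h,u))$. The dissipativity condition~$(\ref{dissipativity condition})$ controls $-(f(u),u)$ from above by $-\mu\|u\|^2+C$ with $\mu>-\lambda_1$; Young's inequality estimates the nonlocal contribution by a small multiple of $l\|u_t\|^2$ plus $C\|u\|^2$. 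Selecting $\delta$ sufficiently small yields $\tfrac{d}{dt}\Psi \leq -\gamma\Psi + C_h$, hence a bounded absorbing ball in $H_0^1(\Omega)\times L^2(\Omega)$, and Lemma~$\ref{20-5-5-2}$ then upgrades it to a positively invariant bounded absorbing set $\mathcal B_0$.

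The main obstacle, shared with \cite{my1}, is the critical growth exponent in $f$: it forbids compactness of the Sobolev embedding at the natural exponent $2N/(N-2)$, which complicates both the passage to the limit in the Galerkin scheme and the uniqueness estimate (the latter needs the time regularity $u\in L^\infty(0,T;H_0^1)$ to close the Gronwall loop). The new $lu_t$ term never obstructs any step; on the contrary, it provides coercive control that tames the nonlocal anti-damping and makes the Lyapunov estimate immediate, which justifies the claim in the text that adding $lu_t$ requires only cosmetic changes to the argument of \cite{my1}.
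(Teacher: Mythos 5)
The paper does not actually prove this lemma: it is quoted from \cite{my1}, and the text only asserts that adding the linear damping $lu_t$ ``makes no essential difference'' to the argument given there. So there is no in-paper proof to compare against; what you have written is a reconstruction of the standard Faedo--Galerkin/energy/Lyapunov route, and as a reconstruction it is essentially the right one. Your energy identity, the coercivity of $E$ from $(\ref{dissipativity condition})$ via Poincar\'e, the uniqueness estimate using $\|f(u)-f(\bar u)\|\leq C(1+\|\nabla u\|^{2/(N-2)}+\|\nabla\bar u\|^{2/(N-2)})\|\nabla w\|$ together with the monotonicity of $v\mapsto\|v\|^pv$ (this is exactly the computation the paper reuses later in $(\ref{20-7-22-1})$), and the perturbed Lyapunov functional for the absorbing set are all sound.

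Two points in your sketch deserve tightening. First, the caveat ``for small $K$, directly; otherwise Gronwall handles the residual'' misidentifies the mechanism: no smallness of $\|K\|_{L^2(\Omega\times\Omega)}$ is needed, and for \emph{dissipativity} (as opposed to a finite-time bound) Gronwall with a positive exponent is not an option. The quadratic anti-damping $(\Psi(u_t),u_t)\leq\|K\|\,\|u_t\|^2$ is absorbed by $l\|u_t\|^2+k\|u_t\|^{p+2}$ via Young's inequality precisely because the damping is superlinear ($p>0$); this is the central structural point of \cite{my1} and should be stated as such, since the uniform-in-time differential inequality $\frac{d}{dt}\Psi\leq-\gamma\Psi+C_h$ depends on it. Second, ``passing to the limit in the linear and nonlocal terms is standard'' glosses over the one genuinely nonlinear, non-compact term $k\|u_t^n\|^pu_t^n$: weak-$*$ convergence of $u_t^n$ in $L^\infty(0,T;L^2)$ does not identify its limit. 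You need either the Minty--Browder monotonicity trick (available here since $(\|v\|^pv-\|w\|^pw,v-w)\geq0$) or strong convergence of $u_t^n$ in $L^2(0,T;L^2(\Omega))$ obtained from the energy identity; one of these should be named explicitly, as it is the only step in the limit passage that is not routine.
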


To establish the main result in this section, the following lemmas are also needed.

\begin{lemma}\cite{Lax}\label{lemma 10-10-3}
The integral operator
\begin{eqnarray*}
K:L^{2}(\Omega  )&\longrightarrow &L^{2}(\Omega) \\
v&\longmapsto &\int_{\Omega}K(x,y)v(y)dy
\end{eqnarray*}
is a compact operator provided that the kernel $K(x,y)$ is square integrable.
\end{lemma}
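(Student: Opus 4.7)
The plan is to establish compactness of $K$ via approximation by finite-rank operators in the operator norm, exploiting the Hilbert--Schmidt structure carried by the square-integrable kernel. The underlying principle is that the compact operators on $L^{2}(\Omega)$ form a norm-closed subspace of the bounded operators, so any operator-norm limit of finite-rank operators is automatically compact.

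First I would verify that $K$ is bounded on $L^{2}(\Omega)$ with operator norm dominated by $\|K(\cdot,\cdot)\|_{L^{2}(\Omega\times\Omega)}$. This follows from a single application of the Cauchy--Schwarz inequality: pointwise one has $|Kv(x)|^{2}\leq \int_{\Omega}|K(x,y)|^{2}\,dy\cdot\|v\|^{2}$, and integrating in $x$ by Fubini's theorem gives $\|Kv\|\leq\|K\|_{L^{2}(\Omega\times\Omega)}\|v\|$. This same inequality, applied later to a difference of kernels, is what converts kernel approximation in $L^{2}(\Omega\times\Omega)$ into operator-norm approximation.

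Next, I would fix any orthonormal basis $\{e_{j}\}_{j=1}^{\infty}$ of $L^{2}(\Omega)$ (for instance, the Dirichlet Laplacian eigenfunctions are convenient but any basis suffices). Then $\{e_{i}(x)e_{j}(y)\}_{i,j\geq 1}$ is an orthonormal basis of $L^{2}(\Omega\times\Omega)$, so the kernel expands as $K(x,y)=\sum_{i,j\geq 1}c_{ij}e_{i}(x)e_{j}(y)$ with $\sum_{i,j\geq 1}|c_{ij}|^{2}=\|K\|_{L^{2}(\Omega\times\Omega)}^{2}<\infty$. Truncating to $K_{n}(x,y)=\sum_{i,j\leq n}c_{ij}e_{i}(x)e_{j}(y)$ produces an integral operator $K_{n}$ whose range lies in $\mathrm{span}\{e_{1},\dots,e_{n}\}$; being of finite rank, each $K_{n}$ is compact.

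The concluding step is to check $K_{n}\to K$ in operator norm. The boundedness estimate from the first step, applied to the kernel $K-K_{n}$, yields $\|K-K_{n}\|_{\mathcal{L}(L^{2})}\leq\|K-K_{n}\|_{L^{2}(\Omega\times\Omega)}=\bigl(\sum_{\max(i,j)>n}|c_{ij}|^{2}\bigr)^{1/2}$, which tends to zero as $n\to\infty$ by $\ell^{2}$-summability of the coefficients. Norm-closedness of the compact operators then delivers compactness of $K$. I do not anticipate a serious obstacle here, since the argument is standard Hilbert--Schmidt theory; the only subtle point to verify carefully is that the products $e_{i}(x)e_{j}(y)$ really do constitute an orthonormal basis of $L^{2}(\Omega\times\Omega)$, which follows from Fubini's theorem together with a density argument on simple functions.
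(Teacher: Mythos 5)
Your proof is correct and is the standard Hilbert--Schmidt argument (finite-rank truncation via a tensor-product orthonormal basis, operator-norm convergence controlled by the $L^{2}(\Omega\times\Omega)$ kernel norm, and norm-closedness of the compact operators); the paper gives no proof of its own but simply cites Lax's textbook, where essentially this same argument appears. No gaps.
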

\begin{lemma}\citep[Corollary 4]{Simon1986}\label{lemma 1-11-1}
Assume $X\hookrightarrow\hookrightarrow B \hookrightarrow Y$ where $X, B, Y$ are Banach spaces. The following statements hold.
\begin{enumerate}[(i)]
\item  Let $F$ be bounded in $L^{p}(0,T;X)$ where $1\leq p<\infty$, and $\partial F/\partial t=\{\partial f/\partial t:f\in F\}$ be bounded in $L^{1}(0,T;Y)$, where $\partial /\partial t$ is the weak time derivative. Then $F$ is relatively compact in $L^{p}(0,T;B)$.
  \item Let $F$ be bounded in $L^{\infty}(0,T;X)$ and $\partial F/\partial t$ be bounded in $L^{r}(0,T;Y)$ where $r>1$. Then $F$ is relatively compact in $C(0,T;B)$.
\end{enumerate}
\end{lemma}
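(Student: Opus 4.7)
My plan is to prove both parts of the lemma via the classical Aubin--Lions--Simon scheme, whose two pillars are Ehrling's interpolation inequality (which extracts compactness from $X\hookrightarrow\hookrightarrow B$) and a temporal compactness criterion: the Kolmogorov--Riesz--Fr\'echet theorem in $L^p$ for part (i), and Arzel\`a--Ascoli in $C$ for part (ii).

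First I would establish Ehrling's lemma: for every $\eta>0$ there exists $C_\eta>0$ such that $\|u\|_B\leq \eta\|u\|_X+C_\eta\|u\|_Y$ for all $u\in X$. The standard argument is by contradiction; if no such $C_\eta$ existed, one could extract a sequence $u_n\in X$ with $\|u_n\|_B=1$, $\|u_n\|_X$ bounded, and $\|u_n\|_Y\to 0$. The compact embedding $X\hookrightarrow\hookrightarrow B$ would then supply a subsequence converging in $B$ to some $u$ with $\|u\|_B=1$, while continuity of $B\hookrightarrow Y$ forces $u=0$, a contradiction.

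For part (i), I would verify the Kolmogorov--Riesz--Fr\'echet criterion for relative compactness in $L^p(0,T;B)$: boundedness of $F$ in $L^p(0,T;B)$ (immediate from $X\hookrightarrow B$ and the $L^p(X)$ bound) together with uniform equicontinuity of time-translates $\|f(\cdot+h)-f(\cdot)\|_{L^p(0,T-h;B)}\to 0$ as $h\to 0^+$. The representation $f(t+h)-f(t)=\int_t^{t+h}\partial_s f\,ds$ and Minkowski's inequality in $Y$ yield the crucial bound $\|f(\cdot+h)-f(\cdot)\|_{L^1(0,T-h;Y)}\leq h\sup_{f\in F}\|\partial_t f\|_{L^1(0,T;Y)}\to 0$. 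Applying Ehrling pointwise in $t$ and then integrating gives, for any $\eta>0$, an estimate of the shape $\|f(\cdot+h)-f(\cdot)\|_{L^p(0,T-h;B)}\leq 2\eta\sup_{f\in F}\|f\|_{L^p(0,T;X)}+C_\eta\|f(\cdot+h)-f(\cdot)\|_{L^p(0,T-h;Y)}$. The residual $L^p(Y)$ term must be upgraded from the available $L^1(Y)$ convergence, which I would handle by a mollification-in-time split $f=f_\varepsilon+(f-f_\varepsilon)$: the smooth part enjoys an $L^\infty(Y)$ bound (allowing the interpolation $\|g\|_{L^p}^p\leq \|g\|_{L^\infty}^{p-1}\|g\|_{L^1}$), while the error is controlled in $L^p(X)\subset L^p(Y)$ and made small by choosing $\varepsilon$ small.

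For part (ii), I would invoke Arzel\`a--Ascoli in $C([0,T];B)$. Fiberwise precompactness of $\{f(t):f\in F\}$ in $B$ at each $t$ follows from the uniform $L^\infty(X)$ bound together with $X\hookrightarrow\hookrightarrow B$, while equicontinuity in $B$ follows from H\"older, $\|f(t+h)-f(t)\|_Y\leq h^{1-1/r}\|\partial_t f\|_{L^r(0,T;Y)}$ (using $r>1$), combined with Ehrling to absorb the uniformly bounded $X$-part. The step I expect to be the main obstacle is precisely the $L^1\to L^p$ upgrade in part (i): reconciling the weak $L^1$ hypothesis on $\partial_t f$ with the stronger $L^p$ tightness required is the technical heart of Simon's refinement of the earlier Aubin--Lions lemma, and the place where the mollification argument above is indispensable.
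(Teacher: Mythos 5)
The paper offers no proof of this lemma --- it is quoted verbatim from Simon's Corollary 4 --- so there is no internal argument to compare against; I judge your proposal on its own terms. Your skeleton (Ehrling's inequality plus a temporal compactness criterion) is the standard one, and part (ii) is essentially complete: each $f(t)$ lies in the $B$-closure of a bounded ball of $X$, which is compact, and H\"older plus Ehrling give equicontinuity in $B$. Part (i), however, has two genuine gaps. First, the criterion you invoke is not a theorem for vector-valued $L^p$: boundedness in $L^{p}(0,T;B)$ together with $\|f(\cdot+h)-f\|_{L^{p}(0,T-h;B)}\to0$ uniformly does \emph{not} imply relative compactness when $\dim B=\infty$ (take $f_n\equiv e_n$ for an orthonormal basis of $B$: both conditions hold, yet $\|f_n-f_m\|_{L^p(0,T;B)}=\sqrt{2}\,T^{1/p}$ for $n\neq m$). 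Simon's characterization additionally requires that the time-averages $\big\{\int_{t_1}^{t_2}f\,dt:\,f\in F\big\}$ be relatively compact in $B$ for all $0<t_1<t_2<T$; here this follows from the $L^{p}(0,T;X)$ bound and $X\hookrightarrow\hookrightarrow B$, but it must be stated and verified --- the compact embedding enters twice, not only through Ehrling.

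Second, your $L^1\to L^p$ upgrade of the translation estimate in $Y$ is circular. After the mollification split, the error term is $2\|f-f_\varepsilon\|_{L^{p}(0,T;Y)}$, which you claim is ``made small by choosing $\varepsilon$ small''; but uniform smallness of $\|f-f\ast\rho_\varepsilon\|_{L^p}$ over $F$ is equivalent to uniform $L^p$-equicontinuity of translates, which is exactly what is being proved, and the hypotheses only supply it in $L^1(0,T;Y)$. The step is in fact much easier than you anticipate: from
\[
\|f(t+h)-f(t)\|_{Y}\;\le\;\int_{t}^{t+h}\|\partial_s f(s)\|_{Y}\,ds\;=\;\big(\|\partial_t f(\cdot)\|_{Y}\ast\chi_{[-h,0]}\big)(t)
\]
and Young's convolution inequality one gets directly $\|f(\cdot+h)-f\|_{L^{p}(0,T-h;Y)}\le h^{1/p}\,\|\partial_t f\|_{L^{1}(0,T;Y)}$, which tends to $0$ uniformly on $F$; no mollification or interpolation is needed. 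With this estimate and the added spatial condition, your Ehrling step closes part (i).
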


\begin{lemma}\label{lem 1-14-2}
Let $\{a_{m,n}\}_{m,n=1}^{\infty}$ be a non-negative real sequence with two variables $m,n\in \mathbb{N}$. Then $\liminf\limits_{m\rightarrow \infty}\liminf\limits_{n\rightarrow \infty}a_{m,n}=0$ if and only if for each $\epsilon>0$ and $M\in\mathbb{N}$ there exists $m=m(\epsilon,M)>M$ such that there exists $n=n(m)>N$ satisfying $a_{m,n}\leq\epsilon$ for every $N\in\mathbb{N}$.
\end{lemma}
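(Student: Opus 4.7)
The plan is to reduce the stated equivalence to two successive applications of the classical characterization: for any non-negative sequence $\{c_n\}$ and any $c\geq 0$, one has $\liminf_{n\to\infty} c_n \leq c$ iff for every $\epsilon>0$ and every $N\in\mathbb{N}$ there exists $n>N$ with $c_n \leq c+\epsilon$. I would begin by setting $b_m := \liminf_{n\to\infty} a_{m,n}$. Applying this characterization to the inner sequence $\{a_{m,n}\}_n$ (for each fixed $m$), the inner clause ``for every $N\in\mathbb{N}$ there exists $n>N$ with $a_{m,n}\leq\epsilon$'' is equivalent to $b_m\leq\epsilon$. Consequently, the right-hand side of the lemma can be rewritten as: for every $\epsilon>0$ and every $M\in\mathbb{N}$, there exists $m>M$ with $b_m\leq\epsilon$.

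For the forward direction, I would assume $\liminf_{m\to\infty} b_m = 0$. Since $b_m\geq 0$ and $M\mapsto \inf_{m>M} b_m$ is non-decreasing with limit $\liminf_{m\to\infty} b_m = 0$, each tail infimum equals $0$. Hence for any $\epsilon>0$ and any $M$ there exists $m>M$ with $b_m<\epsilon$, which upon applying the reduction above returns the desired quantifier-laden statement about $a_{m,n}$. Conversely, I would observe that the right-hand hypothesis yields $\inf_{m>M} b_m \leq \epsilon$ for every $\epsilon>0$ and every $M$, whence $\inf_{m>M} b_m = 0$ for every $M$, and letting $M\to\infty$ gives $\liminf_{m\to\infty} b_m = 0$.

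The hard part here is not analytic but organizational: the statement is essentially a tautology once the definitions of iterated $\liminf$ are unwound, so the main task is to track the nested quantifier order ``$\forall \epsilon,\forall M,\exists m,\forall N,\exists n$'' without confusion. A minor subtlety that will need attention is the strict versus non-strict inequality in the characterization of $\liminf$, but this is harmless since $\epsilon>0$ is arbitrary and one can convert between $<\epsilon$ and $\leq\epsilon$ by passing to $\epsilon/2$.
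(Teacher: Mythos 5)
Your proposal is correct and follows essentially the same route as the paper, which proves the lemma by the same two-step unwinding of the iterated $\liminf$ (first rewriting the condition in terms of $b_m=\liminf_{n\to\infty}a_{m,n}$, then unwinding the inner $\liminf$); the paper states this as a bare chain of two equivalences, while you additionally flag and resolve the $<\epsilon$ versus $\leq\epsilon$ subtlety, which is a point the paper glosses over.
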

\begin{proof}
$\liminf\limits_{m\rightarrow \infty}\liminf\limits_{n\rightarrow \infty}a_{m,n}=0$

$\Longleftrightarrow$~for each~$\epsilon>0$~and each~$M\in\mathbb{N}$, there exists~$m=m(\epsilon,M)>M$~such that~$\liminf\limits_{n\rightarrow \infty}a_{m,n}\leq \epsilon$

$\Longleftrightarrow$~for each $\epsilon>0$ and $M\in\mathbb{N}$ there exists $m=m(\epsilon,M)>M$ such that there exists $n=n(m)>N$ satisfying $a_{m,n}\leq\epsilon$ for every $N\in\mathbb{N}$.
\end{proof}

\begin{lemma}\label{corollary 1-14-1}
 $\{a_{m,n}\}_{m,n=1}^{\infty}$ be a non-negative real sequence with two variables $m,n\in \mathbb{N}$. If there exists a subsequence $\{n_k\}_{k=1}^{\infty}$ of $\{n\}_{n=1}^{\infty}$ such that $\lim\limits_{k,l\rightarrow\infty}a_{n_k,n_l}=0$,
then
\begin{equation*}
  \liminf\limits_{m\rightarrow \infty}\liminf\limits_{n\rightarrow \infty}a_{m,n}=0.
\end{equation*}
\end{lemma}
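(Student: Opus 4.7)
The plan is to apply Lemma~\ref{lem 1-14-2}, which reformulates the statement $\liminf_{m\rightarrow\infty}\liminf_{n\rightarrow\infty}a_{m,n}=0$ as the following combinatorial condition: for every $\epsilon>0$ and every $M\in\mathbb{N}$, there exists $m>M$ such that for every $N\in\mathbb{N}$ one can find $n>N$ with $a_{m,n}\leq\epsilon$. The entire proof therefore reduces to verifying this condition using the two-variable convergence $a_{n_k,n_l}\to 0$ along the subsequence.

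First I would fix an arbitrary $\epsilon>0$ and an arbitrary $M\in\mathbb{N}$. From the hypothesis $\lim_{k,l\rightarrow\infty}a_{n_k,n_l}=0$, I would extract an index $K_0\in\mathbb{N}$ such that $a_{n_k,n_l}\leq\epsilon$ whenever $k,l\geq K_0$. Since $\{n_k\}$ is a (strictly increasing) subsequence of the natural numbers, $n_k\rightarrow\infty$ as $k\rightarrow\infty$, so I can choose $k^{*}\geq K_0$ with $n_{k^{*}}>M$ and define $m:=n_{k^{*}}$.

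Next, given any $N\in\mathbb{N}$, I would similarly pick $l^{*}\geq K_0$ with $n_{l^{*}}>N$ and set $n:=n_{l^{*}}$. Both indices $k^{*}$ and $l^{*}$ lie above $K_0$, so
\begin{equation*}
a_{m,n}=a_{n_{k^{*}},n_{l^{*}}}\leq\epsilon,
\end{equation*}
which is precisely the inequality demanded by Lemma~\ref{lem 1-14-2}. Invoking that lemma in the reverse direction yields $\liminf_{m\rightarrow\infty}\liminf_{n\rightarrow\infty}a_{m,n}=0$.

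There is essentially no obstacle here: the result is a direct corollary of Lemma~\ref{lem 1-14-2}, and the only subtle point worth remarking is that a subsequence of $\mathbb{N}$ automatically satisfies $n_k\rightarrow\infty$, which lets a single index $k^{*}$ serve simultaneously to exceed the prescribed threshold $M$ and to lie beyond $K_0$.
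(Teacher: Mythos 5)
Your proof is correct and follows exactly the route the paper intends: the paper's own proof is the one-line remark that the result ``can be deduced directly from Lemma~\ref{lem 1-14-2},'' and you have simply filled in the verification of that lemma's combinatorial criterion. Nothing to change.
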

\begin{proof} This lemma can be deduced directly from Lemma~\ref{lem 1-14-2}.
\end{proof}

\begin{theorem}\label{20-8-2-4}
Under Assumption~$\ref{22-2-5-1}$, the dynamical system~$(H_0^1(\Omega)\times L^2(\Omega),\{S(t)\}_{t\geq0})$\ generated by problem~$(\ref{wave equa3})$-$(\ref{initial condition3})$ possesses a compact exponential attracting set~$\mathcal{A}^*$ such that for every bounded set~$B\subseteq X$ we have
\begin{equation}\label{21-8-31-11}
\begin{split}
\mathrm{ dist}\left(S(t)B, \mathcal{A}^*\right)\leq Ce^{-\min\{\frac{\sqrt{\lambda_1}}{2},\frac{l}{4}\} (t-t_{*}(B)-1)},\ \forall t\geq t_{*}(B)+1,
  \end{split}
\end{equation}
where~$t_{*}(B)$~is the entering time of~$B$~into~$\mathcal{B}_0$.
\end{theorem}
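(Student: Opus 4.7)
The strategy is to combine Lemma 5.4 (giving the positively invariant bounded absorbing set $\mathcal{B}_0$) with the abstract criterion Theorem 4.8 to establish that the semigroup is exponentially decaying with respect to noncompactness measure at the rate $\omega:=\min\{\sqrt{\lambda_1}/2,\ l/4\}$, and then invoke Theorem 3.1 to produce the compact exponential attracting set with the claimed estimate. The entire task thus reduces to verifying the quasi-stability type inequality $(\ref{22-1-28-3})$ on $\mathcal{B}_0$ with $\eta=e^{-\omega T}$ for $T$ large enough, with appropriate precompact pseudometrics $\varrho_i$ and a contractive term $\phi$ absorbing the critical nonlinearity.

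Fix $y_i=(u_0^i,u_1^i)\in\mathcal{B}_0$ and let $u^i$ be the corresponding solutions. Write $u=u^1-u^2$; then $u$ solves
\begin{equation*}
u_{tt}-\Delta u+lu_t+\bigl(f(u^1)-f(u^2)\bigr)=-k\bigl(\|u_t^1\|^pu_t^1-\|u_t^2\|^pu_t^2\bigr)+\int_\Omega K(x,y)u_t(y,t)\,dy.
\end{equation*}
The plan is to multiply by $u_t+\epsilon u$ for a sufficiently small $\epsilon>0$ (optimized so that $\epsilon\lesssim l$ and $\epsilon\lesssim\sqrt{\lambda_1}$, yielding the exponent $\omega=\min\{\sqrt{\lambda_1}/2,\ l/4\}$) and integrate over $[0,T]\times\Omega$. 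The principal linear part $u_{tt}-\Delta u+lu_t$ then produces the Lyapunov-type bound
\begin{equation*}
\mathcal{E}(T)\leq e^{-2\omega T}\mathcal{E}(0)+C\int_0^T e^{-2\omega(T-s)}\bigl[|(I,u_t)|+|(\mathcal{N},u)|+|(f(u^1)-f(u^2),u_t+\epsilon u)|\bigr]\,ds,
\end{equation*}
where $\mathcal{E}(t)=\tfrac12(\|u_t\|^2+\|\nabla u\|^2)$, and $I$, $\mathcal{N}$ denote the nonlocal integral term and the nonlocal weak damping contribution, respectively.

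The nonlocal integral term $\int_\Omega K(x,y)u_t(y,t)\,dy$ is compact on $L^2(\Omega)$ by Lemma 5.5, and the nonlocal damping difference $\|u_t^1\|^pu_t^1-\|u_t^2\|^pu_t^2$ produces a term controlled by $\|u^1_t-u_t^2\|_{L^2}$ times bounded factors; both can be encapsulated in pseudometrics of the form $\varrho_i(y_1,y_2)=\sup_{t\in[0,T]}\|u^1-u^2\|_{L^2}$ and $\sup_{t\in[0,T]}\|u_t^1-u_t^2\|_{H^{-1}}$, which are precompact on $\mathcal{B}_0$ by Lemma 5.6 (applied to $X=H_0^1$, $B=L^2$, $Y=H^{-1}$ together with the uniform-in-time bound on $(u^i,u_t^i)$). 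These contributions are absorbed in the function $g(\cdot,\cdot)$ of Theorem 4.8, continuous at $0$ and non-decreasing.

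The main obstacle is the critical term $\int_0^T\!\!\int_\Omega (f(u^1)-f(u^2))(u_t^1-u_t^2)\,dx\,ds$, which is \emph{not} controllable by a compact pseudometric because the growth $|f'(s)|\lesssim|s|^{2/(N-2)}+1$ is Sobolev critical. This is where the contractive functional $\phi$ enters: setting
\begin{equation*}
\phi(y_1,y_2):=C\Bigl(\int_0^T\!\!\int_\Omega |f(u^1)-f(u^2)|\,|u_t^1-u_t^2|\,dx\,ds\Bigr)^{1/2},
\end{equation*}
I will show that for every sequence $\{y_n\}\subseteq\mathcal{B}_0$ one can extract a subsequence along which $\phi(y_{n_k},y_{n_l})\to 0$ as $k,l\to\infty$. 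The argument is standard for critical wave equations (cf.\ Chueshov--Lasiecka): a bounded sequence of solutions admits, via Lemma 5.6 applied to higher regularity bootstraps together with $H_0^1\hookrightarrow\hookrightarrow L^q$ for any $q<2N/(N-2)$, a subsequence $u^{n_k}\to u^*$ strongly in $C([0,T];L^q(\Omega))$ for some $q$ strictly below the critical Sobolev exponent, and a uniform Hölder-type estimate on $f$ combined with the uniform boundedness of $\|u^{n_k}\|_{H_0^1}$ then yields $f(u^{n_k})\to f(u^*)$ in $L^2(0,T;L^{2}(\Omega))$, while $u_t^{n_k}-u_t^{n_l}$ is weakly bounded in $L^2$, which is enough to drive $\phi(y_{n_k},y_{n_l})\to 0$.

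Putting all pieces together yields, on $\mathcal{B}_0$,
\begin{equation*}
d_X\bigl(S(T)y_1,S(T)y_2\bigr)\leq e^{-\omega T}d_X(y_1,y_2)+g(\varrho_1(y_1,y_2),\varrho_2(y_1,y_2))+\phi(y_1,y_2),
\end{equation*}
which is precisely hypothesis $(\ref{22-1-28-3})$ with $\eta=e^{-\omega T}<1$ for $T$ large. Theorem 4.8 then delivers $\alpha(S(t)\mathcal{B}_0)\leq 2e^{-\omega(t-T)}\alpha(\mathcal{B}_0)$, i.e.\ exponential decay with respect to noncompactness measure at the rate $\omega=\min\{\sqrt{\lambda_1}/2,\ l/4\}$, and Theorem 3.1 finally yields the compact exponential attracting set $\mathcal{A}^*$ with the bound $(\ref{21-8-31-11})$. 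I expect the principal technical work to be the critical-nonlinearity step (verifying the contractive property of $\phi$ under the critical growth $(\ref{growth})$), while the linear energy computation that produces the explicit exponent $\min\{\sqrt{\lambda_1}/2,\ l/4\}$ is standard but must be carried out with care in the choice of $\epsilon$.
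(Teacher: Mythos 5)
Your overall architecture --- the multiplier $z_t+\delta z$ with $\delta=\min\{\tfrac{\sqrt{\lambda_1}}{2},\tfrac{l}{4}\}$ producing the exponential rate, an abstract decay-of-noncompactness criterion, and then Theorem~\ref{20-8-5-3} --- matches the paper's; the paper routes Theorem~\ref{20-8-2-4} through the contractive-function criterion Theorem~\ref{20-7-24-35} rather than the quasi-stability variant Theorem~\ref{22-1-28-1} (which it reserves for Theorem~\ref{21-9-4-2}), but that choice is not the problem. The genuine gap is in your treatment of the critical term. You define $\phi(y_1,y_2)=C\bigl(\int_0^T\int_\Omega|f(u^1)-f(u^2)|\,|u^1_t-u^2_t|\,dx\,ds\bigr)^{1/2}$ and claim that $u^{n_k}\to u^*$ in $C([0,T];L^q)$ for some subcritical $q$, together with the $H^1_0$ bound, gives $f(u^{n_k})\to f(u^*)$ strongly in $L^2(0,T;L^2)$. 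That implication fails exactly in the critical case: estimating $\|f(u)-f(v)\|_{L^2}$ via $(\ref{growth})$ and H\"older forces the factor $\|u-v\|_{L^{2N/(N-2)}}$ (the weight $|u|^{4/(N-2)}$ is only uniformly bounded in $L^{N/2}$, leaving the conjugate exponent exactly critical), so strong $L^2$-convergence of $f(u^{n_k})$ would require strong convergence in $L^{2N/(N-2)}$, which the compact embeddings you invoke do not provide. Without that, pairing a merely weakly convergent $u_t^{n_k}-u_t^{n_l}$ with $|f(u^{n_k})-f(u^{n_l})|$ under an absolute value placed \emph{inside} the space--time integral gives you nothing.

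The paper's proof is built around precisely this obstruction: it keeps the nonlinear term in the signed form $\bigl|\int_0^te^{-2\delta(t-s)}(f(w)-f(v),w_t-v_t)\,ds\bigr|$ (absolute value \emph{outside} the integral), expands it into four bilinear pieces, and kills the repeated limit using only weak-$*$ convergence $f(u^{(n)})\overset{*}{\rightharpoonup}f(u)$ in $L^\infty(0,t;L^2)$ (see $(\ref{20-7-24-11})$), weak-$*$ convergence of $u_t^{(m)}$, and the potential-energy identity $\tfrac{d}{ds}\int_\Omega F(u^{(n)}(s))\,dx=(f(u^{(n)}),u_t^{(n)})$ for the diagonal terms ($(\ref{20-7-20-18})$--$(\ref{20-7-24-25})$). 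Moving the absolute value inside the integral, as your $\phi$ does, destroys the sign cancellation that this weak-convergence argument exploits, and no strong convergence is available to replace it. A secondary issue: the nonlocal damping difference paired with $z_t$ cannot be ``controlled by $\|u_t^1-u_t^2\|_{L^2}$ times bounded factors'' inside a compact pseudometric --- that would be a term of the same order as the energy itself; the paper instead discards it using the monotonicity $(\|w_t\|^pw_t-\|v_t\|^pv_t,w_t-v_t)\ge0$, and only its pairing with $z$ (bounded by $C\|z\|$) enters the contractive functional.
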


\begin{proof}
Write $\Psi(u_t(t,x))=\int_{\Omega}K(x,y)u_t(t,y)dy$. Let $w(t),v(t)$ be two weak solutions to $(\ref{wave equa3})$-$(\ref{initial condition3})$ corresponding to initial
data ~$y_1,y_2\in \mathcal{B}_0$, i.e.,
\begin{equation*}
(w(t),w_t(t))\equiv S(t)y_1, (v(t),v_t(t))\equiv S(t)y_2, \ y_1,y_2\in \mathcal{B}_0.
\end{equation*}
Since~$\mathcal{B}_0$~is positively invariant, we have
\begin{equation}\label{20-7-20-1}
\begin{cases}\|(w(t),w_t(t))\|_{ H^1_0(\Omega)\times L^2(\Omega)}\leq C,\\\|(v(t),v_t(t))\|_{ H^1_0(\Omega)\times L^2(\Omega)}\leq C,\end{cases}\forall t>0, y_1,y_2\in \mathcal{B}_0.
\end{equation}
The difference $z(t)=w(t)-v(t)$ satisfies
\begin{equation}\label{20-7-20-2}
z_{tt}-\Delta z +k(||w_t||^p w_t-||v_t||^p v_t)+lz_t+f(w)-f(v)=\Psi(z_t).
 \end{equation}
Let~$E(z)=\frac{1}{2}\big(\|z_t\|^2+\|\nabla z\|^2\big)$, $E_{\delta}(z)=E(z)+\delta (z_t,z)$. Take $\delta=\min\{\frac{\sqrt{\lambda_1}}{2},\frac{l}{4}\}$.
Then by Poincar\'e's inequality we have

\begin{equation}\label{20-7-20-3}
\delta|(z,z_t)|\leq \frac{\sqrt{\lambda_1}}{2} \frac{1}{\sqrt{\lambda_1}}\|\nabla z\| \|z_t\|\leq \frac{1}{4}\big( \|\nabla z\|^2+\|z_t\|^2\big)\leq \frac{1}{2} E(z)
\end{equation}
and
\begin{equation}\label{20-7-20-4}
 \frac{1}{2}E(z)\leq E_{\delta}(z)\leq \frac{3}{2}E(z).
\end{equation}

Multiplying $(\ref{20-7-20-2})$ by $z_t+\delta z$ in $L^2(\Omega)$, we obtain
\begin{equation}\label{20-7-20-5}
\begin{split}
  \frac{d}{dt}E_{\delta}(z)+2\delta E_{\delta}(z)=&-\big(k||w_{t}||^p w_t-k||v_{t}||^p v_t,w_t-v_t\big)-(l-2\delta)\|z_t\|^{2}\\
  &-\big(f(w)-f(v),w_t-v_t\big)+\big(\Psi(w_t-v_t),w_t-v_t\big)\\
  &-\delta\big(k||w_{t}||^p w_t-k||v_{t}||^p v_t,w-v\big)\\
  &-\delta\big(f(w)-f(v),w-v\big)+\delta\big(\Psi(w_t-v_t),w-v\big)\\
  &+(2\delta^2-\delta l)(z_t,z).
\end{split}
\end{equation}
It follows from~$(\ref{20-7-20-1})$~that
\begin{equation}\label{20-7-22-2}
  \|\Psi(w_t-v_t)\|\leq\|K\|_{L^2(\Omega\times\Omega)}\|w_t-v_t\|\leq C.
\end{equation}
We deduce from~$(\ref{growth})$~and~$(\ref{20-7-20-1})$ that
\begin{equation}\label{20-7-22-1}
\begin{split}
 &\|f(w)-f(v)\|\\=&\left\{\int_\Omega\bigg[\int_0^1f'\big(v+\theta(w-v)\big)\big(w-v\big)d\theta\bigg]^2dx\right\}^{\frac{1}{2}}\\
 \leq&C\left\{\int_\Omega\big(|w|^{\frac{4}{N-2}}+|v|^{\frac{4}{N-2}}+1\big)|w-v|^{2}dx\right\}^{\frac{1}{2}}\\
 \leq&C\big(\|w\|_{\frac{2N}{N-2}}^{\frac{2}{N-2}}+\|v\|_{\frac{2N}{N-2}}^{\frac{2}{N-2}}+1\big)\|w-v\|_{\frac{2N}{N-2}}\\
  \leq&C\big(\|\nabla w\|^{\frac{2}{N-2}}+\|\nabla v\|^{\frac{2}{N-2}}+1\big)\|\nabla(w-v)\|\\
  \leq &C.
\end{split}\end{equation}
We infer from~$(\ref{20-7-20-1})$,~$(\ref{20-7-22-2})$~and~$(\ref{20-7-22-1})$~that
\begin{equation}\label{20-7-20-6}
\begin{split}
  &-\delta\big(k||w_{t}||^p w_t-k||v_{t}||^p v_t,w-v\big)-\delta\big(f(w)-f(v),w-v\big)\\&+\delta\big(\Psi(w_t-v_t),w-v\big)+(2\delta^2-\delta l)(z_t,z)
  \leq C\|z\|
\end{split}
\end{equation}
and
\begin{equation}\label{20-7-20-8}
\begin{split}
  \big(\Psi(w_t-v_t),w_t-v_t\big)\leq C\|\Psi(w_t-v_t)\|.
\end{split}
\end{equation}
Plugging~$(\ref{20-7-20-6})$~and~$(\ref{20-7-20-8})$~into~$(\ref{20-7-20-5})$ gives
\begin{equation}\label{20-7-20-7}
\begin{split}
  &\frac{d}{dt}E_{\delta}(z)+2\delta E_{\delta}(z)\\\leq
  &-\big(f(w)-f(v),w_t-v_t\big)+C\|\Psi(w_t-v_t)\|+C\|z\|.
\end{split}
\end{equation}
By~Gronwall's inequality,~$(\ref{20-7-20-4})$ and~$(\ref{20-7-20-7})$, we have
\begin{equation}\label{20-7-20-9}
\begin{split}
 \frac{1}{2} E(z(t))\leq& E_{\delta}(z(t))\\\leq &e^{-2\delta t} E_{\delta}(z(0))+C\int_0^t \|z(s)\|ds\\
 &+C\int_0^t \|\Psi(w_t(s)-v_t(s))\|ds\\&+\left|\int_0^t e^{-2\delta(t-s)}\big(f(w(s))-f(v(s)),w_t(s)-v_t(s)\big)ds\right|\\
 \leq&e^{-2\delta t} E_{\delta}(z(0))+Ct\sup_{s\in[0,t]}\|z(s)\|\\
 &+C\int_0^t \|\Psi(w_t(s)-v_t(s))\|ds\\&+\left|\int_0^t e^{-2\delta(t-s)}\big(f(w(s))-f(v(s)),w_t(s)-v_t(s)\big)ds\right|.
\end{split}
\end{equation}
Write
\begin{equation}\label{20-7-21-1}
\begin{split}
\Phi_t(y_1,y_2)\equiv&Ct\sup_{s\in[0,t]}\|z(s)\|+C\int_0^t \|\Psi(w_t(s)-v_t(s))\|ds\\&+\left|\int_0^t e^{-2\delta(t-s)}\big(f(w(s))-f(v(s)),w_t(s)-v_t(s)\big)ds\right|.
\end{split}
\end{equation}
We derive from~$(\ref{20-7-20-4})$ and~$(\ref{20-7-20-9})$ that
\begin{equation}\label{20-7-20-10}
\begin{split}
\|S(t)y_1-S(t)y_2\|_{ H^1_0(\Omega)\times L^2(\Omega)}\leq &2e^{-\delta t}\sqrt{E_{\delta}(z(0))}+2\sqrt{\Phi_t(y_1,y_2)}\\
\leq&2e^{-\delta t}\sqrt{\frac{3}{2}E(z(0))}+2\sqrt{\Phi_t(y_1,y_2)}\\
\leq&Ce^{-\delta t}+2\sqrt{\Phi_t(y_1,y_2)}.
\end{split}
\end{equation}
Next, we will prove that
\begin{equation*}
  \liminf_{n\rightarrow\infty} \liminf_{m\rightarrow\infty}2\sqrt{\Phi_t(y^{(n)},y^{(m)})}=0, \ \forall \{y^{(n)}\}\subseteq \mathcal{B}_0,
\end{equation*}
which is equivalent to
\begin{equation}\label{20-7-20-20}
  \liminf_{n\rightarrow\infty} \liminf_{m\rightarrow\infty}\Phi_t(y^{(n)},y^{(m)})=0, \ \forall \{y^{(n)}\}\subseteq \mathcal{B}_0.
\end{equation}
We write~$u^{(n)}(t)=S(t)y^{(n)}$ and thereby have
\begin{equation*}
\begin{split}
  \Phi_t(y^{(n)},y^{(m)})=&\left|\int_0^t e^{-2\delta(t-s)}\big( f(u^{(n)}(s))-f(u^{(m)}(s)),u^{(n)}_t(s)-u^{(m)}_t(s)\big)ds\right|\\&+C \int_0^t\|\Psi(u^{(n)}_t(s)-u^{(m)}_t(s))\|ds+tC\sup_{s\in[0,t]}\|u^{(n)}(s)-u^{(m)}(s)\|.
\end{split}\end{equation*}
By Alaoglu's theorem, we deduce from
 \begin{equation}\label{20-7-23-1}
  \|(u^{(n)}(t),u^{(n)}_t(t))\|_{ H^1_0(\Omega)\times L^2(\Omega)}\leq C,\ \forall t\geq0
\end{equation}
that there exists a subsequence of~$\{u^{(n)}\}$~, still denoted by~$\{u^{(n)}\}$, such that
 \begin{equation}\label{20-7-20-30}
   (u^{(n)},u^{(n)}_t)\overset{\ast}\rightharpoonup (u,v)\ \  \text{in} \ L^{\infty}(0,t;H^1_0(\Omega)\times L^2(\Omega)).
 \end{equation}
Moreover, we can verify that $v=u_t$.

Indeed, by $(\ref{20-7-20-30})$, for any~$\phi(s)\in C_c^{\infty}[0,t]$~and any~$\psi_0(x)\in H^2(\Omega)\cap H^1_0(\Omega)$, we have
\begin{equation*}
\begin{split}
  &\int_0^t\big(u^{(n)}_t(s),\phi(s)\triangle\psi_0(x)\big)ds\\=&\int_0^t\phi(s)\frac{d}{dt}\big(u^{(n)}(s),\triangle\psi_0(x)\big)ds\\=&-\int_0^t\phi'(s)\big( u^{(n)}(s),\triangle\psi_0(x)\big)ds\\=&\int_0^t\big( \nabla u^{(n)}(s),\phi'(s)\nabla\psi_0(x)\big)ds\\
 \rightarrow &\int_0^t\big(\nabla u(s),\phi'(s)\nabla\psi_0(x)\big)ds\\ =&\int_0^t\big(u_t(s),\phi(s)\triangle\psi_0(x)\big)ds
  \end{split}
\end{equation*}
and
\begin{equation*}
  \int_0^t\big(u^{(n)}_t(s),\phi(s)\triangle\psi_0(x)\big)ds\rightarrow\int_0^t\big(v(s),\phi(s)\triangle\psi_0(x)\big)ds
\end{equation*}
as~$n\rightarrow\infty$. It follows that~$v=u_t$.

By~Arzel\`{a}-Ascoli~Theorem,
 \begin{equation}\label{20-7-20-11}
    C\big([0,T],H_0^1(\Omega)\big)\cap C^1\big([0,T],L^2(\Omega)\big)\hookrightarrow\hookrightarrow C\big([0,T],L^2(\Omega)\big).
 \end{equation}
Hence there exists a subsequence of~$\{u^{(n)}\}$~, still denoted by~$\{u^{(n)}\}$, such that
 \begin{equation}\label{20-7-20-33}
   u^{(n)}\rightarrow w\ \   \text{in}\ C([0,T];L^2(\Omega)).
 \end{equation}
 Besides, we have~$w=u$.

Indeed, by~$(\ref{20-7-20-30})$, for every~$\psi\in L^1(0,t;H^2(\Omega)\cap H^1_0(\Omega))$, we have
\begin{equation}\label{20-7-23-5}
  \left|\int_0^t\big(u^{(n)}(s)-u(s),\triangle\psi(s)\big)ds\right|=\left|\int_0^t\big(\nabla(u^{(n)}(s)-u(s)),\nabla\psi(s)\big)ds\right|\rightarrow 0.
\end{equation}
Meanwhile, it follows from~$(\ref{20-7-20-33})$~that
\begin{equation*}
\begin{split}
 \left|\int_0^t\big(u^{(n)}(s)-w(s),\triangle\psi\big)ds\right|\leq \sup_{s\in[0,t]}\|u^{(n)}(s)-w(s)\|\int_0^t\|\triangle \psi(s)\|ds\rightarrow0.
\end{split}
\end{equation*}
Consequently,~$w=u$.

Let~$\mathcal{A}$~be the strictly positive operator on~$L^2(\Omega)$~defined by~$\mathcal{A}=-\triangle$~with domain~$D(\mathcal{A})=H^2(\Omega)\cap H^1_0(\Omega)$.

 We derive from~$H^{N}(\Omega)\hookrightarrow L^{\infty}(\Omega)$~(where~$N$~ denotes the spatial dimension) that~$L^1(\Omega)\hookrightarrow (L^{\infty}(\Omega))^{*}\hookrightarrow H^{-N}(\Omega)$. Therefore, we deduce from~$(\ref{growth})$~and~$(\ref{20-7-23-1})$~that
\begin{equation}\label{20-7-23-7}
\begin{split}
  &\|\mathcal{A}^{-\frac{N}{2}}f(u^{(n)}(s))-\mathcal{A}^{-\frac{N}{2}}f(u(s))\|\\
= &\|f(u^{(n)}(s))-f(u(s))\|_{H^{-N}(\Omega)}\\
\leq &C\|f(u^{(n)}(s))-f(u(s))\|_{1}\\
=&C\int_{\Omega}\left|\int_0^1f'\big(u(s)+\theta(u^{(n)}(s)-u(s))\big)\cdot(u^{(n)}(s)-u(s))d\theta\right|dx\\
\leq &C\int_{\Omega}(|u^{(n)}(s)|^{\frac{2}{N-2}}+|u(s)|^{\frac{2}{N-2}}+1)\cdot|u^{(n)}(s)-u(s)|dx\\
\leq &C\|u^{(n)}(s)-u(s)\|\cdot(1+\|u^{(n)}(s)\|_{\frac{4}{N-2}}^{\frac{2}{N-2}}+\|u(s)\|_{\frac{4}{N-2}}^{\frac{2}{N-2}})\\
\leq &C\|u^{(n)}(s)-u(s)\|(1+\|\nabla u^{(n)}(s)\|^{\frac{2}{N-2}}+\|\nabla u(s)\|^{\frac{2}{N-2}})\\
\leq &C\|u^{(n)}(s)-u(s)\|
\end{split}
\end{equation}
holds for all~$s\in[0,t]$.

Combining~$(\ref{20-7-20-33})$~and~$(\ref{20-7-23-7})$~gives
\begin{equation*}
  \sup_{s\in[0,t]}\|\mathcal{A}^{-\frac{N}{2}}\big(f(u^{(n)}(s))-f(u(s))\big)\|\rightarrow0 \ \text{as}\ n\rightarrow\infty.
\end{equation*}
Hence
\begin{equation}\label{20-7-24-9}
\begin{split}
  &\left|\int_0^t\big(f(u^{(n)}(s))-f(u(s)),\varphi(s)\big)ds\right|\\
  =&\left|\int_0^t\Big(\mathcal{A}^{-\frac{N}{2}}\big(f(u^{(n)}(s))-f(u(s))\big),\mathcal{A}^{\frac{N}{2}}\varphi(s)\Big)ds\right|\\
  \leq &\sup_{s\in[0,t]}\left\|\mathcal{A}^{-\frac{N}{2}}\big(f(u^{(n)}(s))-f(u(s))\big)\right\|\int_0^t\|\varphi(s)\|_{H^N(\Omega)}ds\\
\rightarrow&0
\end{split}
\end{equation}
holds for all~$\varphi\in L^1(0,t;H^N(\Omega)\cap H^1_0(\Omega))$.
Since~$L^1\big(0,t;H^N(\Omega)\cap H^1_0(\Omega)\big)$~is dense in $L^1\big(0,t;L^{2}(\Omega)\big)$, $(\ref{20-7-24-9})$~implies
\begin{equation}\label{20-7-24-11}
f(u^{(n)})\overset{\ast}\rightharpoonup f(u)\ \   \text{in}\ L^{\infty}(0,t;L^2(\Omega)).
\end{equation}
We infer from~$(\ref{20-7-20-30})$~and~$(\ref{20-7-24-11})$~that
\begin{equation}\label{20-7-20-18}
\begin{split}
  &\lim_{n\rightarrow\infty}\lim_{m\rightarrow\infty}\int_0^t e^{-2\delta(t-s)}\big( f(u^{(n)}(s)),u^{(m)}_t(s)\big)ds\\
  =&\lim_{n\rightarrow\infty}\lim_{m\rightarrow\infty}\int_0^t \big( e^{-2\delta(t-s)}f(u^{(n)}(s)),u^{(m)}_t(s)\big)ds\\
   =&\lim_{n\rightarrow\infty}\int_0^t \big( e^{-2\delta(t-s)}f(u^{(n)}(s)),u_t(s)\big)ds\\
    =&\lim_{n\rightarrow\infty}\int_0^t \big( f(u^{(n)}(s)),e^{-2\delta(t-s)}u_t(s)\big)ds\\
     =&\int_0^t \big( f(u(s)),e^{-2\delta(t-s)}u_t(s)\big)ds\\
     =&\int_0^t e^{-2\delta(t-s)}\big( f(u(s)),u_t(s)\big)ds
\end{split}\end{equation}
and
\begin{equation}\label{20-7-20-19}
\begin{split}
  &\lim_{n\rightarrow\infty}\lim_{m\rightarrow\infty}\int_0^t e^{-2\delta(t-s)}\big( f(u^{(m)}(s)),u^{(n)}_t(s)\big)ds\\
     =&\int_0^t e^{-2\delta(t-s)}\big( f(u(s)),u_t(s)\big)ds.
\end{split}\end{equation}
Write~$F(\mu)=\int_0^{\mu}f(\tau)d\tau$. We deduce from~$(\ref{growth})$~and~$(\ref{20-7-23-1})$ that
\begin{equation}\begin{split}
 & \left|\int_{\Omega}F(u^{(n)}(s))dx-\int_{\Omega}F(u(s))dx\right|\\
  \leq&\int_{\Omega}\left|\int_0^1f\big(u(s)+\theta(u^{(n)}(s)-u(s))\big)\cdot(u^{(n)}(s)-u(s))d\theta\right|dx\\
\leq & C\int_{\Omega}(|u^{(n)}(s)|^{\frac{N}{N-2}}+|u(s)|^{\frac{N}{N-2}}+1)\cdot|u^{(n)}(s)-u(s)|dx\\
\leq &C\|u^{(n)}(s)-u(s)\|\cdot\big(1+\|u^{(n)}(s)\|_{\frac{2N}{N-2}}^{\frac{N}{N-2}}+\|u(s)\|_{\frac{2N}{N-2}}^{\frac{N}{N-2}}\big)\\
\leq &C\|u^{(n)}(s)-u(s)\|\big(1+\|\nabla u^{(n)}(s)\|^{\frac{N}{N-2}}+\|\nabla u(s)\|^{\frac{N}{N-2}}\big)\\
\leq &C\|u^{(n)}(s)-u(s)\|
\end{split}
\end{equation}
holds for all~$s\in[0,t]$, which together with~$(\ref{20-7-20-33})$, yields
\begin{equation}\label{20-7-20-17}
\sup_{s\in[0,t]}\left|\int_{\Omega}F(u^{(n)}(s))dx-\int_{\Omega}F(u(s))dx\right|\rightarrow 0\  (n\rightarrow\infty).
\end{equation}
It follows from~$(\ref{20-7-20-17})$~that
\begin{equation}\label{20-7-24-20}
\begin{split}
  &\lim_{n\rightarrow\infty}\int_0^t e^{-2\delta(t-s)}\big( f(u^{(n)}(s)),u^{(n)}_t(s)\big)ds\\
  =&\lim_{n\rightarrow\infty}\Big[\int_{\Omega}F(u^{(n)}(t))dx-e^{-2\delta t}\int_{\Omega}F(u^{(n)}(0))dx-\int_0^t 2\delta e^{-2\delta(t-s)}\int_{\Omega}F(u^{(n)}(s))dxds\Big]\\
  =&\int_{\Omega}F(u(t))dx-e^{-2\delta t}\int_{\Omega}F(u(0))dx-\int_0^t 2\delta e^{-2\delta(t-s)}\int_{\Omega}F(u(s))dxds\\
  =&\int_0^t e^{-2\delta(t-s)}\big( f(u(s)),u_t(s)\big)ds.
\end{split}\end{equation}
From~$(\ref{20-7-20-18})$,~$(\ref{20-7-20-19})$~and~$(\ref{20-7-24-20})$, we obtain
\begin{equation}\label{20-7-24-25}
\begin{split}
  &\lim_{n\rightarrow\infty}\lim_{m\rightarrow\infty}\left|\int_0^t e^{-2\delta(t-s)}\big( f(u^{(n)}(s))-f(u^{(m)}(s)),u^{(n)}_t(s)-u^{(m)}_t(s)\big)ds\right|\\
  =&\left|\lim_{n\rightarrow\infty}\lim_{m\rightarrow\infty}\Big[\int_0^t e^{-2\delta(t-s)}\big( f(u^{(n)}(s))-f(u^{(m)}(s)),u^{(n)}_t(s)-u^{(m)}_t(s)\big)ds\Big]\right|\\
  =&0.
\end{split}\end{equation}
Let $V$ be the completion of $L^2(\Omega)$ with respect to the norm $\|\cdot\|_{V}$ given by
$\|\cdot\|_V=\|\Psi(\cdot)\|+\|\mathcal{A}^{-\frac{1}{2}}\cdot\|$ and $W$ be the completion of $L^2(\Omega)$ with respect to the norm $\|\cdot\|_{W}$ given by $\|\cdot\|_W=\|\mathcal{A}^{-\frac{1}{2}}\cdot\|$. By Lemma $\ref{lemma 10-10-3}$, we have
\begin{equation}\label{20-7-20-12}
 L^2(\Omega)\hookrightarrow\hookrightarrow V\hookrightarrow W.
\end{equation}
By~$(\ref{20-7-22-1})$,
\begin{equation}\label{20-7-24-28}
\begin{split}
  \|f(u^{(n)}(t))\|\leq C.
\end{split}\end{equation}
In addition,
\begin{equation}\label{20-7-24-29}
\begin{split}
\|\Psi(u_{t}^{(n)}(t))\|\leq \|K\|_{L^2(\Omega\times\Omega)}\|u_{t}^{(n)}(t)\|\leq C.
\end{split}\end{equation}
We deduce from~$(\ref{wave equa3})$,~$(\ref{20-7-24-28})$~and~$(\ref{20-7-24-29})$~that
\begin{equation*}
\begin{split}
  \|\mathcal{A}^{-\frac{1}{2}}u_{tt}^{(n)}(s)\|\leq&\|\nabla u^{(n)}(s)\|+(k\|u_{t}^{(n)}(s)\|^p+l)\|\mathcal{A}^{-\frac{1}{2}}u_{t}^{(n)}(s)\|\\
  &+\|\mathcal{A}^{-\frac{1}{2}}\big(\Psi(u_{t}^{(n)}(s))+h-f(u^{(n)}(s))\big)\|\\\leq &C,
  \end{split}
\end{equation*}
which implies
\begin{equation}\label{20-7-20-13}
   \int_0^t\|\mathcal{A}^{-\frac{1}{2}}u_{tt}^{(n)}(s)\|ds\leq C_t.
\end{equation}
Besides, we have
\begin{equation}\label{20-7-20-14}
  \int_0^t\|u_{t}^{(n)}(s)\|dt\leq C_{t}.
\end{equation}
By Lemma $\ref{lemma 1-11-1}$,~$(\ref{20-7-20-12})$,~$(\ref{20-7-20-13})$ and~$(\ref{20-7-20-14})$  imply that $\big\{u^{(n)}_{t}(t)\big\}_{n=1}^{\infty}$ is relatively compact in $L^1(0,t;V)$. Thus there exists a subsequence of~$\big\{(u^{(n)},u^{(n)}_t)\big\}_{n=1}^{\infty}$~(still denoted by itself) such that
\begin{equation}\label{20-7-20-15}
 \lim_{n,m\rightarrow\infty}\int_0^t\|\Psi\big(u^{(n)}_{t}(s)-u^{(m)}_{t}(s)\big)\|ds=0.
\end{equation}
It follows from~$(\ref{20-7-20-33})$~that
\begin{equation}\label{20-7-20-22}
  \lim_{n,m\rightarrow\infty}\sup_{s\in[0,t]}\|u^{(n)}(s)-u^{(m)}(s)\|=0,
\end{equation}
which, together with~$(\ref{20-7-20-15})$~and Lemma $\ref{corollary 1-14-1}$, leads to
\begin{equation}\label{20-7-20-25}
  \liminf_{n\rightarrow\infty} \liminf_{m\rightarrow\infty} \big[C \int_0^t\|\Psi(u^{(n)}_t(s)-u^{(m)}_t(s))\|ds+tC\sup_{s\in[0,t]}\|u^{(n)}(s)-u^{(m)}(s)\|\big]=0.
\end{equation}
~$(\ref{20-7-20-20})$ follows from~$(\ref{20-7-24-25})$~and~$(\ref{20-7-20-25})$.

By Theorem~$\ref{20-7-24-35}$, combining~$(\ref{20-7-20-10})$ and~$(\ref{20-7-20-20})$ yields
\begin{equation}\label{21-8-29-1}
\begin{split}
\alpha(S(t)\mathcal{B}_0)\leq Ce^{-\delta t}=Ce^{-\min\{\frac{\sqrt{\lambda_1}}{2},\frac{l}{4}\} t},\ \forall t>0.
 \end{split}\end{equation}

Consequently, by Theorem~$\ref{20-8-5-3}$,~$(H_0^1(\Omega)\times L^2(\Omega),\{S(t)\}_{t\geq0})$~possesses a compact exponential attracting set~$\mathcal{A}^*$ such that for every bounded set~$B\subseteq X$ we have
\begin{equation}\label{21-8-29-3}
\begin{split}
\mathrm{ dist}\left(S(t)B, \mathcal{A}^*\right)\leq Ce^{-\min\{\frac{\sqrt{\lambda_1}}{2},\frac{l}{4}\} (t-t_{*}(B)-1)},\ \forall t\geq t_{*}(B)+1,
  \end{split}
\end{equation}
where~$t_{*}(B)$~is the entering time of~$B$~into~$\mathcal{B}_0$.

The proof is completed.
\end{proof}

As an application of Theorem~$\ref{22-1-28-1}$, the following theorem  establishes the estimation of the attractive velocity of a compact exponential attracting set for Problem $(\ref{wave equa3})$-$(\ref{initial condition3})$ by a method different from that in Theorem $\ref{20-8-2-4}$, and its estimation result is superior to the estimation obtained in Theorem $\ref{20-8-2-4}$.

\begin{theorem}\label{21-9-4-2}
 Under Assumption~$\ref{22-2-5-1}$, the dynamical system~$(H_0^1(\Omega)\times L^2(\Omega),\{S(t)\}_{t\geq0})$\ generated by problem~$(\ref{wave equa3})$-$(\ref{initial condition3})$ possesses a compact exponential attracting set~$\mathcal{A}^*$ such that for every bounded set~$B\subseteq X$ we have
\begin{equation}\label{22-1-29-56}
\begin{split}
\mathrm{ dist}\left(S(t)B, \mathcal{A}^*\right)\leq  2^{-\frac{l(t-t_{*}(B)-1)}{3}+2}\alpha\big(\mathcal{B}_0\big),\ \forall t\geq\frac{3}{l}+t_{*}(B)+1,
\end{split}
\end{equation}
where~$t_{*}(B)$ is the entering time of~$B$ into~$\mathcal{B}_0$.
\end{theorem}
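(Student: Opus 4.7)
My plan is to rerun the energy computation of Theorem~$\ref{20-8-2-4}$ but to invoke the refined quasi-stability criterion Theorem~$\ref{22-1-28-1}$ in place of Theorem~$\ref{20-7-24-35}$; tuning the time step so that the contraction factor is exactly $1/2$ on a step of length $3/l$, and then feeding the resulting $\alpha$-decay through Theorem~$\ref{20-8-5-3}$, will produce $(\ref{22-1-29-56})$. Concretely, for $y_1,y_2\in\mathcal{B}_0$ I set $(w,w_t)=S(t)y_1$, $(v,v_t)=S(t)y_2$, $z=w-v$, and reproduce the Lyapunov argument $(\ref{20-7-20-5})$--$(\ref{20-7-20-10})$ based on $E_\delta(z)=E(z)+\delta(z,z_t)$, arriving at
\begin{equation*}
  \|S(t)y_1-S(t)y_2\|_{H_0^1(\Omega)\times L^2(\Omega)}\leq \sqrt{3}\,e^{-\delta t}\|y_1-y_2\|_{H_0^1(\Omega)\times L^2(\Omega)}+2\sqrt{\Phi_t(y_1,y_2)},
\end{equation*}
with $\Phi_t$ exactly the functional in $(\ref{20-7-21-1})$.

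The new ingredient is the calibration $T=\tfrac{3}{l}$ together with $\delta=\tfrac{l}{3}\ln(2\sqrt{3})$. Since $\ln(2\sqrt{3})<3/2$ we still have $\delta<l/2$, so the sign of $-(l-2\delta)\|z_t\|^2$ in $(\ref{20-7-20-5})$ is preserved; further shrinking $\delta$ below $\sqrt{\lambda_1}/2$ if necessary keeps the Poincar\'e bound $(\ref{20-7-20-3})$ in force and hence $E_\delta\geq\tfrac12 E$. With these choices $\sqrt{3}\,e^{-\delta T}=\tfrac12$, so the inequality at $t=T$ fits the template $(\ref{22-1-28-3})$ with $\eta=1/2$, $m=0$, $g\equiv 0$, and $\phi(y_1,y_2):=2\sqrt{\Phi_T(y_1,y_2)}$. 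Hypothesis (ii) of Theorem~$\ref{22-1-28-1}$ on this $\phi$ is verified by the very subsequence extractions already carried out inside the proof of Theorem~$\ref{20-8-2-4}$: Alaoglu on $\{(u^{(n)},u^{(n)}_t)\}$ giving $(\ref{20-7-20-30})$, Aubin--Lions via Lemma~$\ref{lemma 1-11-1}$ giving $(\ref{20-7-20-15})$ and $(\ref{20-7-20-22})$, and the $\mathcal{A}^{-N/2}$-duality together with the weak-$*$ limit $(\ref{20-7-24-11})$--$(\ref{20-7-24-25})$ that handles the critical nonlinearity. Run on a single diagonal subsequence $\{y^{(n_k)}\}\subseteq\{y^{(n)}\}\subseteq\mathcal{B}_0$, they produce
\begin{equation*}
  \lim_{k\to\infty}\lim_{j\to\infty}\Phi_T\bigl(y^{(n_k)},y^{(n_j)}\bigr)=0,
\end{equation*}
and hence the same for $\phi$.

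With all hypotheses of Theorem~$\ref{22-1-28-1}$ in place on $\mathcal{B}_0$, that theorem yields
\begin{equation*}
  \alpha(S(t)\mathcal{B}_0)\leq 2\,(1/2)^{(t-T)/T}\alpha(\mathcal{B}_0)=2^{-lt/3+2}\alpha(\mathcal{B}_0),\quad \forall t\geq T=3/l.
\end{equation*}
Plugging $\varphi(t)=2^{-lt/3+2}\alpha(\mathcal{B}_0)$ with $t_0=3/l$ into Theorem~$\ref{20-8-5-3}$ then delivers a compact $\varphi$-attracting set $\mathcal{A}^*$ satisfying $\mathrm{dist}(S(t)B,\mathcal{A}^*)\leq \varphi(t-t_{*}(B)-1)=2^{-l(t-t_{*}(B)-1)/3+2}\alpha(\mathcal{B}_0)$ for every $t\geq t_{*}(B)+3/l+1$, which is precisely $(\ref{22-1-29-56})$.

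The main obstacle I anticipate is not the energy calculation---it is essentially copied from Theorem~$\ref{20-8-2-4}$---but upgrading the ``$\liminf\liminf\Phi_T=0$'' conclusion that sufficed for Theorem~$\ref{20-7-24-35}$ there to the ``iterated limit along one subsequence'' statement demanded by condition~(ii) of Theorem~$\ref{22-1-28-1}$ here. In practice this reduces to checking that each compactness step above produces a genuine limit (not only a $\liminf$) and that the several diagonal extractions can be consolidated into a single subsequence; the one place where only an iterated limit is available is the critical-nonlinearity contribution $(\ref{20-7-24-25})$, but this is exactly the mode of convergence that condition~(ii) is designed to accept, so no further strengthening is needed.
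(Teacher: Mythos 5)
There is a genuine gap, and it sits exactly at the point you flag as a mere technicality: the calibration $\delta=\tfrac{l}{3}\ln(2\sqrt{3})$. In the Lyapunov argument $(\ref{20-7-20-3})$--$(\ref{20-7-20-5})$ the perturbation parameter $\delta$ is constrained not only by $\delta<l/2$ (to keep $-(l-2\delta)\|z_t\|^2\leq 0$) but also by $\delta\leq\tfrac{\sqrt{\lambda_1}}{2}$, which is what makes $E_\delta$ equivalent to $E$ via Poincar\'e and hence is indispensable, not optional. Your proposed value $\delta=\tfrac{l}{3}\ln(2\sqrt{3})\approx 0.414\,l$ satisfies the first constraint but violates the second whenever $\sqrt{\lambda_1}<\tfrac{2l}{3}\ln(2\sqrt{3})$, which happens for every fixed domain once $l$ is large. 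Your escape hatch --- ``further shrinking $\delta$ below $\sqrt{\lambda_1}/2$ if necessary'' --- destroys the identity $\sqrt{3}\,e^{-\delta T}=\tfrac12$ at $T=3/l$: you then get $\eta=\sqrt{3}\,e^{-3\sqrt{\lambda_1}/(2l)}$, which is worse than $1/2$ and can even exceed $1$. In short, the perturbed-energy route caps the achievable decay rate at $\min\{\sqrt{\lambda_1}/2,\,cl\}$, exactly as in Theorem~$\ref{20-8-2-4}$, and therefore cannot produce the $\lambda_1$-independent rate $\tfrac{l\ln 2}{3}$ asserted in $(\ref{22-1-29-56})$. This is precisely the sense in which the paper calls the present estimate ``superior'' to that of Theorem~$\ref{20-8-2-4}$.

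The paper reaches $\eta=1/2$ by a structurally different energy computation that never introduces the functional $E_\delta$ and hence never meets the $\sqrt{\lambda_1}$ barrier: one multiplies $(\ref{21-9-2-53})$ by $z_t$ and integrates from $t$ to $T$ (giving $(\ref{21-9-2-55})$), integrates that identity over $t\in[0,T]$, multiplies by $z$ and integrates (giving $(\ref{21-9-2-58})$), and combines these with the dissipation bound $(\ref{20-7-25-15})$ to obtain $(1+lT)E_z(T)\leq E_z(0)+(\text{contractive terms})$, i.e.\ the quasi-stability inequality $(\ref{22-1-28-51})$ with $\eta=\tfrac{1}{\sqrt{1+lT}}=\tfrac12$ at $T=3/l$, valid for every $\lambda_1$. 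The remainder of your plan is sound: the compactness bookkeeping you describe (Alaoglu, Lemma~$\ref{lemma 1-11-1}$, the $\mathcal{A}^{-N/2}$ duality for the critical nonlinearity) does upgrade to the iterated-limit form required by hypothesis (ii) of Theorem~$\ref{22-1-28-1}$, and feeding $\varphi(t)=2^{-lt/3+2}\alpha(\mathcal{B}_0)$ into Theorem~$\ref{20-8-5-3}$ is exactly how the paper concludes. But to repair your proof you must replace the $E_\delta$--Gronwall step by an argument of the integrated-identity type; no choice of $\delta$ rescues the calibration in general.
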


\begin{proof}
Let~$w(t),v(t)$ be two weak solutions to~$(\ref{wave equa3})$-$(\ref{initial condition3})$ corresponding to initial
data~$y_1,y_2\in \mathcal{B}_0$.
The difference~$z(t)=w(t)-v(t)$ satisfies
\begin{equation}\label{21-9-2-53}
z_{tt}-\Delta z +k(||w_t||^p w_t-||v_t||^p v_t)+lz_t+f(w)-f(v)=\Psi(z_t).
 \end{equation}
Let~$E_z(t)=\frac{1}{2}(||z_t(t)||^2+||\nabla z(t)||^2)$ and~$T=\frac{3}{l}$. Multiplying~$(\ref{21-9-2-53})$ by~$z_t$~in~$L^2(\Omega)$ and integrating from~$t$ to~$T$, we obtain
 \begin{equation}\label{21-9-2-55}
\begin{split}
E_z(t)=&E_z(T)+k\int_t^T(||w_t||^p w_t-||v_t||^p v_t,z_t)d\tau+l\int_t^T||z_t||^2 d\tau+\int_t^T(f(w)-f(v),z_t)d\tau\\&-\int_t^T(\Psi(z_t),z_t)d\tau.
\end{split}
 \end{equation}
Integrating~$(\ref{21-9-2-55})$ from~$0$ to~$T$ yields
 \begin{equation}\label{21-9-2-56}
\begin{split}
TE_z(T)=&\int_0^TE_z(t)dt-\int_0^T\int_t^Tk(||w_t||^p w_t-||v_t||^p v_t,z_t)d\tau dt-l\int_0^T\int_t^T||z_t||^2 d\tau dt\\&-\int_0^T\int_t^T(f(w)-f(v),z_t)d\tau dt +\int_0^T\int_t^T(\Psi(z_t),z_t)d\tau dt.
\end{split}
 \end{equation}
Multiplying~$(\ref{21-9-2-53})$ by~$z$ in~$L^2(\Omega)$ and integrating from~$0$ to~$T$ yields
 \begin{equation}\label{21-9-2-58}
\begin{split}
\int_0^T E_z(t)dt=&-\frac{1}{2}(z_t,z)|^T_0+ \int_0^T ||z_t||^2 dt- \frac{1}{2} \int_0^T(f(w)-f(v),z) dt +  \frac{1}{2} \int_0^T(\Psi(z_t),z)  dt\\&- \frac{k}{2} \int_0^T(||w_t||^p w_t-||v_t||^p v_t,z)dt- \frac{l}{2} \int_0^T(z_t,z)dt.
\end{split}
\end{equation}
Substituting~$(\ref{21-9-2-58})$ into~$(\ref{21-9-2-56})$, we have
 \begin{equation}\label{20-7-25-7}
\begin{split}
TE_z(T)=&-\frac{1}{2}(z_t,z)|^T_0+ \int_0^T ||z_t||^2 dt- \frac{1}{2} \int_0^T(f(w)-f(v),z) dt +  \frac{1}{2} \int_0^T(\Psi(z_t),z)  dt
\\&- \frac{k}{2} \int_0^T(||w_t||^p w_t-||v_t||^p v_t,z)dt-\int_0^T\int_t^Tk(||w_t||^p w_t-||v_t||^p v_t,z_t)d\tau dt\\&-\int_0^T\int_t^T(f(w)-f(v),z_t)d\tau dt +\int_0^T\int_t^T(\Psi(z_t),z_t)d\tau dt- \frac{l}{2} \int_0^T(z_t,z)dt\\&-l\int_0^T\int_t^T||z_t||^2 d\tau dt.
\end{split}
 \end{equation}
 By Lemma 4.2 in \cite{my4}, we have
 \begin{equation*}
(||w_t||^p w_t-||v_t||^p v_t,w_t-v_t)\geq C_p||w_t-v_t||^{p+2}.
 \end{equation*}
Taking~$t=0$ in~$(\ref{21-9-2-55})$ yields
 \begin{equation}\label{20-7-25-15}
\begin{split}
\int_0^T ||z_t||^2 dt\leq &\int_0^T ||z_t||^2 dt+\frac{k}{l}\int_0^T(||w_t||^p w_t-||v_t||^p v_t,w_t-v_t)dt\\=&\frac{1}{l}\left(E_z(0)-E_z(T)-\int_0^T(f(w)-f(v),z_t)d\tau+\int_0^T(\Psi(z_t),z_t)d\tau\right).
\end{split}\end{equation}
It is easy to obtain the following estimate:
 \begin{equation}\label{21-12-7-3}
   -\frac{1}{2} \int_0^T(f(w)-f(v),z) dt \leq  TC\sup_{t\in[0,T]}\|z(t)\|,
\end{equation}
\begin{equation}\label{21-12-7-4}
  -\frac{1}{2}(z_t,z)|^T_0\leq C\sup_{t\in[0,T]}\|z(t)\|,
\end{equation}
\begin{equation}\label{21-12-7-5}
  \int_0^T(\Psi(z_t),z)  dt\leq TC\sup_{t\in[0,T]}\|z(t)\|,
\end{equation}
\begin{equation}\label{21-12-7-1}
  -\frac{k}{2} \int_0^T(||w_t||^p w_t-||v_t||^p v_t,z)dt\leq TC\sup_{t\in[0,T]}\|z(t)\| ,
\end{equation}
\begin{equation}\label{21-12-7-6}
  \int_0^T\int_t^T(\Psi(z_t),z_t)d\tau dt\leq TC\int_0^T\|\Psi(z_t(t))\|dt.
\end{equation}
Plugging~$(\ref{20-7-25-15})$-$(\ref{21-12-7-6})$ into~$(\ref{20-7-25-7})$, we obtain
 \begin{equation*}
\begin{split}
TE_z(T)\leq &C(T+1)\sup_{t\in[0,T]}\|z(t)\|+TC\int_0^T\|\Psi(z_t(t))\|dt+\left|\int_0^T\int_t^T(f(w)-f(v),z_t)d\tau dt\right|\\&+\frac{1}{l}\bigg(E_z(0)-E_z(T)+\left|\int_0^T(f(w)-f(v),z_t)d\tau\right|+C\int_0^T\|\Psi(z_t(t))\|dt\bigg),
\end{split}
 \end{equation*}
 i.e.,
 \begin{equation*}
\begin{split}
\Big(d\big(S(T)y_1,S(T)y_2\big)\Big)^2\leq & \frac{1}{1+lT}\bigg(\big(d(y_1,y_2)\big)^2+C_T\sup_{t\in[0,T]}\|z(t)\|+C_T\int_0^T\|\Psi(z_t(t))\|dt\\&+C_T\left|\int_0^T\int_t^T(f(w)-f(v),z_t)d\tau dt\right|+C_T\left|\int_0^T(f(w)-f(v),z_t)d\tau\right|\bigg),
\end{split}\end{equation*}
which implies
 \begin{equation}\label{22-1-28-51}
\begin{split}
d\big(S(T)y_1,S(T)y_2\big)\leq & \frac{1}{\sqrt{1+lT}}\bigg\{ d(y_1,y_2)+\Big(C_T\sup_{t\in[0,T]}\|z(t)\|+C_T\int_0^T\|\Psi(z_t(t))\|dt\Big)^{\frac{1}{2}}\\&+\left(C_T\left|\int_0^T\int_t^T(f(w)-f(v),z_t)d\tau dt\right|+C_T\left|\int_0^T(f(w)-f(v),z_t)d\tau\right|\right)^{\frac{1}{2}}\bigg\}.
\end{split}\end{equation}
Let~$\big(u^{(n)}(t),u^{(n)}_t(t)\big)=S(t)y^{(n)}~(\{y^{(n)}\}\subseteq \mathcal{B}_0)$,~$F(\mu)=\displaystyle\int_0^{\mu}f(\tau)d\tau$ and~$s\in (0,1)$. By Alaoglu's theorem and Lemma $\ref{lemma 1-11-1}$, we deduce from~$H^1_0(\Omega)\hookrightarrow\hookrightarrow H^s(\Omega)\hookrightarrow L^2(\Omega)$ that there exists a subsequence of~$\big\{(u^{(n)},u^{(n)}_t)\big\}_{n=1}^{\infty}$~(still denoted by~$\big\{(u^{(n)},u^{(n)}_t)\big\}_{n=1}^{\infty}$) such that
\begin{eqnarray}\label{1-13-1}
 \begin{cases}
 (u^{(n)},u^{(n)}_t)\overset{\ast}\rightharpoonup (u,u_t)\ \  \text{in} \ L^{\infty}(0,T;H^1_0(\Omega)\times L^2(\Omega)),\\
 u^{(n)}\rightarrow u\ \   \text{in }\ C([0,T];H^s(\Omega)),
 \end{cases}\  \ \text{as}\ n\rightarrow\infty.
\end{eqnarray}
By~$(\ref{growth})$, for all~$t\geq0$ we have
\begin{equation}\label{1-13-10}
\begin{split}
 \left|\int_{\Omega}F(u^{(n)}(t))dx-\int_{\Omega}F(u(t))dx\right|
\leq C\|u^{(n)}(t)-u(t)\|_{H^{s}(\Omega)}.
\end{split}
\end{equation}
Combining~$(\ref{1-13-1})$ and~$(\ref{1-13-10})$ gives
\begin{equation}\label{1-13-11}
  \int_{\Omega}F(u^{(n)}(t))dx\rightrightarrows\int_{\Omega}F(u(t))dx\ \text{as}\ n\rightarrow\infty.
\end{equation}
We deduce from~$(\ref{growth})$ that
\begin{equation}\label{1-13-4}
\begin{split}
  \|\mathcal{A}^{-\frac{N}{2}}f(u^{(n)}(t))-\mathcal{A}^{-\frac{N}{2}}f(u(t))\|
\leq C\|f(u^{(n)}(t))-f(u(t))\|_{1}
\leq C\|u^{(n)}(t)-u(t)\|_{H^{s}(\Omega)},~ \forall t\geq0.
\end{split}
\end{equation}
Combining~$(\ref{1-13-1})$ and~$(\ref{1-13-4})$ gives
\[\sup_{t\in[0,T]}\|\mathcal{A}^{-\frac{N}{2}}\big(f(u^{(n)}(t))-f(u(t))\big)\|\rightarrow0 \ \text{as}\ n\rightarrow\infty.\]
Consequently, for each fixed~$t\in[0,T]$ and each~$\varphi\in L^1\big(0,T;H^N(\Omega)\cap H^1_0(\Omega)\big)$, we have
~\[\int_t^T\big(f(u^{(n)}(\tau))-f(u(\tau)),\varphi\big)d\tau\rightarrow0\ \text{as}\ n\rightarrow\infty,\] which implies
   \begin{equation}\label{1-13-7}
f(u^{(n)})\overset{\ast}\rightharpoonup f(u)\ \text{in}\ L^{\infty}\big(t,T;L^2(\Omega)\big)\ \text{as}\ n\rightarrow\infty.
\end{equation}
From~$(\ref{1-13-1})$ and~$(\ref{1-13-7})$, we obtain
\begin{equation}\label{1-13-8}
\begin{split}
  \lim_{n\rightarrow\infty}\lim_{m\rightarrow\infty}\int_t^T\int_{\Omega}
  f(u^{(n)}(\tau))u^{(m)}_{t}(\tau)dxd\tau
  =\int_{\Omega}F(u(T))dx-\int_{\Omega}F(u(t))dx,\\
   \lim_{n\rightarrow\infty}\lim_{m\rightarrow\infty}\int_t^T\int_{\Omega}
  f(u^{(m)}(\tau))u^{(n)}_{t}(\tau)dxd\tau =\int_{\Omega}F(u(T))dx-\int_{\Omega}F(u(t))dx.
\end{split}\end{equation}
By Lebesgue's dominated convergence theorem, we deduce from~$(\ref{1-13-11})$ and~$(\ref{1-13-8})$ that
\begin{equation}\label{1-13-12}
\begin{split}
  &\lim_{n\rightarrow\infty}\lim_{m\rightarrow\infty}\left|\int_0^T\int_t^T\big(f(u^{(n)}(\tau))-f(u^{(m)}(\tau)),u^{(n)}_{t}(\tau)-u^{(m)}_{t}(\tau)\big)d\tau dt\right|\\=&\lim_{n\rightarrow\infty}\lim_{m\rightarrow\infty}\left|\int_0^T\big(f(u^{(n)}(t))-f(u^{(m)}(t)),u^{(n)}_{t}(t)-u^{(m)}_{t}(t)\big) dt\right|=0.
\end{split}\end{equation}
 As in the proof of Theorem~$\ref{20-8-2-4}$, we see that ~$\varrho_{1}(y_1,y_2)=\int_0^T\|\Psi(z_t(t))\|dt$, $\rho_{2}(y_1,y_2)=\sup_{t\in[0,T]}\|z(t)\|$ are precompact on~$\mathcal{B}_0$.

By Theorem~$\ref{22-1-28-1}$, we deduce from~$(\ref{22-1-28-51})$,~$(\ref{1-13-12})$ and the precompactness of $\varrho_{1}(y_1,y_2)=\int_0^T\|\Psi(z_t(t))\|dt$, $\rho_{2}(y_1,y_2)=\sup_{t\in[0,T]}\|z(t)\|$ that
\begin{equation}\label{21-12-21-7}
\begin{split}
\alpha\big(S(t)\mathcal{B}_0\big)\leq 2^{-\frac{lt}{3}+2}\alpha\big(\mathcal{B}_0\big),\ \forall t\geq\frac{3}{l}.
 \end{split}\end{equation}
Consequently, by Theorem~$\ref{20-8-5-3}$,~$(H_0^1(\Omega)\times L^2(\Omega),\{S(t)\}_{t\geq0})$~possesses a compact exponential attracting set~$\mathcal{A}^*$ satisfying ~$(\ref{22-1-29-56})$.
\end{proof}

 \section*{References }
\bibliographystyle{plain}

\begin{thebibliography}{00}
   \bibitem{MR1153247}
    R.R. Akhmerov, M.I. Kamenski\u{\i}, A.S. Potapov, A.E. Rodkina and B.N. Sadovski\u{\i}, \emph{Measures of Noncompactness and Condensing Operators}, Birkh\"{a}user Verlag, Basel, 1992.
https://doi.org/10.1007/978-3-0348-5727-7

\bibitem{Chueshov2008}
      I. Chueshov and I. Lasiecka, \emph{Long-time Behavior of Second Order Evolution Equations with Nonlinear Damping}, Mem. Amer. Math. Soc., 2008. http://dx.doi.org/10.1090/memo/0912.

\bibitem{MR966192}
     P. Constantin, C. Foias, B. Nicolaenko and R. Temam, \emph{Integral Manifolds and Inertial Manifolds for Dissipative
              Partial Differential Equations}, Springer-Verlag, New York, 1989.
https://doi.org/10.1007/978-1-4612-3506-4

\bibitem{K. Deimling}
K. Deimling, \emph{Nonlinear Functional Analysis}, Springer, Berlin, 1985.

 \bibitem{MR1335230}
     A. Eden, C. Foias, B. Nicolaenko and R. Temam, \emph{Exponential Attractors for Dissipative Evolution Equations}, Masson, Paris, 1994.


                  \bibitem{Foias1996}
C. Foias and E. Olson, Finite fractal dimension and H\"older-Lipshitz parametrization, \emph{Indiana Univ. Math. J.}, \textbf{45}(1996), 603--616.

       \bibitem{Inertialmanifolds}
C. Foias, G.R. Sell and R. Temam, Inertial manifolds for nonlinear evolution equations, \emph{J. Diff. Eqns.}, \textbf{73}(1988), 309--353.

 \bibitem{MR1710097}
B.R. Hunt and V.Y. Kaloshin, Regularity of embeddings of infinite-dimensional fractal sets into finite-dimensional spaces, \emph{Nonlinearity}, \textbf{12}(1999), 1263--1275.
https://doi.org/10.1088/0951-7715/12/5/303

\bibitem{Lax}
P. Lax, \emph{Functional Analysis}, Wiley-Interscience [John Wiley $\&$ Sons], New York, 2002.

\bibitem{MWZH}
Q. Ma, S. Wang and C. Zhong, Necessary and sufficient conditions for the existence of global attractors for semigroups and applications, \emph{Indiana Univ. Math. J.}, \textbf{51}(2002), 1541--1559. http://dx.doi.org/10.1512/iumj.2002.51.2255.


      \bibitem{MR2508165}
A. Miranville and S. Zelik, Attractors for dissipative partial differential equations in
              bounded and unbounded domains, in \emph{Handbook of Differential Equations: Evolutionary Equations.
              {V}ol. {IV}}, Elsevier/North-Holland, Amsterdam, (2008), 103--200.
https://doi.org/10.1016/S1874-5717(08)00003-0

        \bibitem{MR921912}
 X. Mora and J. Sol\`a-Morales, Existence and nonexistence of finite-dimensional globally
              attracting invariant manifolds in semilinear damped wave
              equations, in \emph{Dynamics of Infinite-dimensional Systems}, Springer, Berlin, (1987), 187--210.


\bibitem{Robinson2001}
     J.C. Robinson, \emph{Infinite-dimensional Dynamical Systems}, Cambridge University Press, Cambridge, 2001.

             \bibitem{MR1873467}
G.R. Sell and Y. You, \emph{Dynamics of Evolutionary Equations}, Springer, New York, 2002.




\bibitem{Simon1986}
      J. Simon, Compact sets in the space $L^p(0,T;B)$, \emph{Ann. Mat. Pura Appl.}, \textbf{146}(1986), 65--96.

      \bibitem{TEMAM}
     R. Temam, \emph{Infinite-dimensional Dynamical Systems in Mechanics and
              Physics}, 2$^{nd}$ edition, Springer-Verlag, New York, 1997. http://dx.doi.org/10.1007/978-1-4612-0645-3.


    \bibitem{Zelik2001}
S. Zelik, The attractor for a nonlinear hyperbolic equation in the unbounded domain, \emph{Discrete Cont. Dyn. Systems}, \textbf{7}(2001), 593--641.

\bibitem{MR1871475}
S. Zelik, The attractor for a nonlinear reaction-diffusion system in the unbounded domain and Kolmogorov's~$\epsilon$-entropy, \emph{Math. Nachr.}, \textbf{232}(2001), 129--179.
https://doi.org/10.1142/97898127926170138



  \bibitem{Zelik2003}
     S. Zelik, Attractors of reaction-diffusion systems in unbounded domains and their spatial complexity, \emph{Commun. Pure Appl. Math.}, \textbf{56}(2003), 584--637.

\bibitem{Zhangjin1}
J. Zhang, P. Kloeden, M. Yang and C. Zhong, Global exponential~$\kappa-$dissipative semigroups and exponential attraction, \emph{Discrete Contin. Dyn. Syst.}, \textbf{37}(2017), 3487--3502. https://dx.doi.org/10.3934/dcds.2017148.


\bibitem{my1}
C. Zhao, C. Zhao and C. Zhong, Asymptotic behaviour of the wave equation with nonlocal weak
              damping and anti-damping, \emph{J. Math. Anal. Appl.}, \textbf{490}(2020), 124186--124202.
https://doi.org/10.1016/j.jmaa.2020.124186




\bibitem{my2}
C. Zhao, C. Zhong and Z. Tang, Asymptotic behaviour of the wave equation with nonlocal weak damping, anti-damping and critical nonlinearity, preprint. https://arxiv.org/abs/2108.07395

\bibitem{my5}
C. Zhao, C. Zhong and S. Yan, Existence of a generalized polynomial attractor for the wave
equation with nonlocal weak damping, anti-damping and critical
nonlinearity, \emph{Appl. Math. Lett.}, \textbf{128}(2022), 107791--107799. https://doi.org/10.1016/j.aml.2021.107791

\bibitem{my4}
C. Zhao, C. Zhong and C. Zhao, Estimate of the attractive velocity of attractors for some dynamical systems  (in Chinese), \emph{ Sci. Sin. Math.}, \textbf{52}(2022), 1-20. https://arxiv.org/abs/2108.07410


\bibitem{MR2728546}
C. Zhong and W. Niu, On the {$Z_2$} index of the global attractor for a class of~$p$-Laplacian equations, \emph{Nonlinear Anal.}, \textbf{73}(2010), 3698--3704.


\end{thebibliography}



%


\end{document}